\documentclass[11pt]{article}
\usepackage[a4paper,margin=1in]{geometry}
\usepackage{amsmath,amssymb,amsthm,mathtools}
\usepackage{mathrsfs}
\usepackage{microtype}
\usepackage{hyperref}
\usepackage[nameinlink,capitalize]{cleveref}
\usepackage{authblk}
\hypersetup{
    hidelinks,
    pdftitle={A cyclotomic gluing construction for lattice sphere packings},
}
\newtheorem{theorem}{Theorem}[section]
\newtheorem{proposition}[theorem]{Proposition}
\newtheorem{lemma}[theorem]{Lemma}
\newtheorem{corollary}[theorem]{Corollary}
\newtheorem{remark}[theorem]{Remark}
\newtheorem{definition}[theorem]{Definition}
\usepackage{xcolor}
\theoremstyle{definition}
\newtheorem{example}{Example}[section]

\usepackage{tikz}
\usepackage{xcolor}

\newcommand{\vol}{\operatorname{vol}}
\newcommand{\covol}{\operatorname{covol}}

\newcommand{\Ent}{\operatorname{Ent}}

\newcommand{\Res}{\operatorname{Res}}

\title{A cyclotomic gluing construction for lattice sphere packings}
\author[1]{Nihar Gargava}
\author[2]{Matthias Gröbner}
\author[3]{Michael Simkin}
\affil[1]{Institut de Mathématiques d’Orsay, Université Paris-Saclay, France\par
\texttt{nihar.gargava@universite-paris-saclay.fr}}
\affil[2]{Chair of Number Theory, EPFL, Lausanne, Switzerland\par
\texttt{matthias.grobner@epfl.ch}}
\affil[3]{Independent researcher, Tbilisi, Georgia \par
\texttt{simsim314@gmail.com}}
\date{}

\begin{document}
\maketitle

\begin{abstract}
    For a positive integer $d$, let $\Delta_d^L$ denote the supremum of
    the densities of lattice sphere packings in $\mathbb R^d$. We prove
    that, for every sufficiently large $d$,
    $$
    \Delta_d^L
    \gg
    d^2\log\log d\,2^{-d}.
    $$
    Abuya--Gargava--Zhao obtained the same bound along a sparse sequence of
dimensions by combining Venkatesh's cyclotomic symmetries with Klartag's
stochastic ellipsoid method. We introduce a multi-cyclotomic gluing
construction that allows this argument to work in every sufficiently
large dimension. 

\end{abstract}


\section{Introduction}\label{sec:introduction}

For a positive integer \(d\), let \(\Delta_d^L\) denote the supremum of the densities of sphere packings in \(\mathbb R^d\) whose centers form a lattice. Our main result gives an unbounded multiplicative improvement over the best known general lower bound for \(\Delta_d^L\), valid in every sufficiently large dimension. 
The resulting bound also improves the best known general bound for sphere packings not necessarily arising from lattices.


\subsection{Previous results}

The sphere packing problem has a long history. The following is a non-exhaustive list of results that are relevant for this paper.

\begin{enumerate}
    \item 
        Venkatesh exploited cyclotomic symmetries in lattice packing constructions~\cite{Venkatesh}, obtaining $\Delta_{d}^{L} \gg d^{} \log \log d \cdot 2^{-d}$ for infinitely many $d$. 

        The argument applies in dimensions of the form
        $
        d=\varphi(m)r,
        $
        where $\varphi$ is Euler's totient function, $m$ is the order
        of the cyclic action and $r=2$. The gain comes from the
        fact that vectors in a full cyclic orbit impose only one constraint on an
        invariant quadratic form.  
        The ratio
        $m /\varphi(m)$ can be of order $\log\log m$. Hence, the set of dimensions $d \geq 1$ for which this argument works does not even have positive density. The same construction with $r=1$ was developed in~\cite{GargavaViazovska}.
        This idea was studied further in
        \cite{GargavaSerbanViazovska,GargavaModuleShortest}.

    \item 
        Klartag introduced a different mechanism \cite{Klartag}. Starting from a
        lattice-free ball, he uses a stochastic deformation to produce a larger lattice-free ellipsoid, leading to the bound $\Delta_d^L\gg d^2 2^{-d}$ as $d \rightarrow  \infty$.  
        
    \item 
    Abuya-Gargava-Zhao  combined Klartag's stochastic method with the cyclotomic symmetries in Venkatesh's construction~\cite{AbuyaGargavaZhao}. They obtained
        $
        \Delta_d^L
        \gg
        d^2\log\log d\,2^{-d}
        $
        for an infinite sequence of dimensions $d$ of the form $d= \varphi(m) r$. Their argument requires both a choice of $m$ with $m/\varphi(m) \asymp \log\log m$ and a sufficiently large multiplicity $r$. Consequently, the argument applies only to a restricted set of dimensions.  
\end{enumerate}

\subsection{Main theorem}
Our main result extends the logarithmic gain from cyclic
symmetry to every sufficiently large dimension. The proof retains the stochastic-symmetry mechanism of Abuya--Gargava--Zhao, but replaces the single cyclotomic component by a suitably glued collection of cyclotomic components.  

\begin{theorem}\label{thm:main}
    There exist an absolute constant $c>0$ and an integer $d_0\ge1$ such that,
    for every integer $d\ge d_0$, we have 
    $$
    \Delta_d^L
    \ge
    c\,d^2\log\log d\,2^{-d}.
    $$
\end{theorem}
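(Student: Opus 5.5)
We plan to build, for each large $d$, a lattice in $\mathbb R^d$ that is a module over a product of cyclotomic rings. We then run the Abuya--Gargava--Zhao version of Klartag's stochastic ellipsoid argument on the resulting space of invariant lattices.

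\textbf{Step 1: choosing the cyclotomic components.} Fix $m$ with $m/\varphi(m)\gg\log\log d$, for instance $m$ the product of the primes up to roughly $\tfrac12\log d$, so that $\varphi(m)=d^{o(1)}$. Write
\[
d=\varphi(m)\,r+s,\qquad 0\le s<\varphi(m).
\]
We then decompose the remainder $s$ as a sum $\sum_j \varphi(m_j)r_j$ over divisors $m_j\mid m$ (or over other small moduli). The requirements are that every component carries a $\mu_{m_j}$-action and that the total dimension carried by components with small $m_j/\varphi(m_j)$ is $o(d/\log\log d)$. Alternatively, and more robustly, we \emph{glue}. We take the lattice $\Lambda=\bigoplus_j \Lambda_j$ with $\Lambda_j$ an $\mathcal O_{K_{m_j}}$-module. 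We let the single cyclic group $\mu_m$ act on each block through the quotient $\mu_m\to\mu_{m_j}$. We then pass to a sublattice or superlattice obtained from a gluing code, so that a generic vector has full $\mu_m$-orbit of size $m$. Only vectors in the sub-blocks where $\mu_m$ acts with smaller orbits lose part of the gain, and we arrange that these blocks carry small weight.

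\textbf{Step 2: the Siegel-type mean value.} On the space of such glued lattices, with the invariant probability measure coming from the product of the groups $\mathrm{GL}_{r_j}(K_{m_j}\otimes\mathbb R)$ twisted by the gluing, we prove a mean value theorem. Counting lattice vectors in a ball of volume $V$, modulo the $\mu_m$-orbits, should give an expected number of orbits about $V/m$ for full orbits. There are error terms from short orbits and from vectors lying in proper sub-blocks. These can be bounded by lower-dimensional Siegel averages, which are exponentially smaller once each block has dimension $\gg d/\text{poly}\log d$.

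\textbf{Step 3: the stochastic ellipsoid.} Following Klartag and Abuya--Gargava--Zhao, we run the stochastic evolution of an invariant ellipsoid, restricted to quadratic forms commuting with the action. The key input is that each orbit of lattice vectors imposes one constraint. With the mean value from Step 2, the usual argument yields an invariant lattice-free ellipsoid whose volume is larger than $\covol(\Lambda)$ by a factor of order $d^2\cdot m/\varphi(m)\cdot(\text{loss})$, hence a density $\gg d^2\log\log d\,2^{-d}$.

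\textbf{Main obstacle.} The main obstacle is making Step 3 work when the space of invariant quadratic forms is a product of several Hermitian blocks of different sizes rather than a single $\mathrm{Herm}_r$. Klartag's concentration estimates require each block to have large rank $r_j$. We would therefore first try to use only the two moduli $m$ and $m'=m\cdot p$, or $m$ together with its divisors, each with multiplicity $r_j\ge d^{1-o(1)}$. We would check that the concentration and the moment bounds for the random ellipsoid still hold blockwise, with a union bound over the boundedly many blocks.
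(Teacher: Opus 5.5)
There is a genuine gap, and it is in Step~2, not Step~3. You assert that vectors in proper sub-blocks are controlled by lower-dimensional Siegel averages that are exponentially smaller. The sign is reversed.

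Normalize the whole lattice to covolume $v_d$. The expected number of vectors of support $S$ (of real dimension $s$) in the unit ball is then $v_s v_d^{-s/d}$ times the arithmetic and rescaling factors, and $\log\bigl(v_s v_d^{-s/d}\bigr)=\frac{s}{2}\log\frac{d}{s}+O(\log d)>0$. So in a direct sum of blocks the proper supports carry exponentially \emph{more} short vectors than the full support. This is exactly the toy example $\mathbb Z^r\oplus\mathbb Z^r$ in the introduction.

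Rescaling the blocks does not repair this. In the paper's notation, summing the support exponent over a cut, the zero-mean rescaling cancels:
\[
\mathscr A(S)+\mathscr A(S^c)=\frac{c_S}{2}\log\frac{C}{c_S}+\frac{c_{S^c}}{2}\log\frac{C}{c_{S^c}}-W(S,S^c).
\]
Without gluing weight across the cut, one side is overabundant by a factor $\exp\bigl(\Omega(s\log\frac{d}{s})\bigr)$. This is superpolynomial once the remainder block has $s\gg\log d$, which is the typical case: $m/\varphi(m)\gg\log\log d$ forces $\varphi(m)\ge\exp((\log d)^{c})$. Then the initial ball is not lattice-free on average and the weighted contact count is uncontrolled, whatever the orbit sizes. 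Your remark that low-weight blocks ``only lose part of the gain'' concerns orbit size, not this overabundance.

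What is missing is a gluing that is quantitatively strong enough. Together with a zero-mean rescaling it must give $\mathscr A(S)\le-\delta$ for \emph{every} proper support, while keeping $M/C\gg\log\log d$ and absorbing every residue of $d$ modulo $C$. Invoking ``a gluing code'' does not provide this, and in the cyclotomic setting it is the heart of the proof.

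The support indices are governed by resultants, which are nontrivial only when $m/n$ is a prime power. Since $\varphi(n)$ is even for $n>2$, odd remainders force a component with $n\in\{1,2\}$. Such a component is not glued at all to $\Phi_m$ once $m$ has two odd prime factors. On the other hand, $\{n:n\mid m\}$ with equal multiplicities has $M/C=1$. This is why the paper needs the blocks $\{n:n\mid N\}\cup\{qN\}$, the cube entropy inequality (\cref{lem:divisor-cube-entropy}), a local potential, the product lemma, and a multi-scale choice of primes.

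Your fallback $\{m,mp\}$ is in fact well glued (resultant $p^{\varphi(m)}$, as in the $\{1,p\}$ example), but it only reaches $d\equiv 0\pmod{\varphi(m)}$.

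Conversely, the obstacle you single out is not one. With $L=\mathcal A^r\oplus\mathbb Z^a$ all Hermitian blocks have the same rank $r$, and \cref{lem:angular} uses only $\dim\mathcal Q$ and $\operatorname{Id}\in\mathcal Q$.

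Finally, primes up to $\frac12\log d$ give $\varphi(m)=d^{1/2+o(1)}$, not $d^{o(1)}$. The paper takes primes up to $(\log d)^{1/20}$ so that $C=d^{o(1)}$ and $r\gg C^3\log d$, which its support and stochastic error terms require.
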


\subsection{Proof strategy}

We begin with the basic idea of gluing. Suppose that $\Lambda_{1} \subseteq \mathbb{R}^{n_{1}}$ and
$\Lambda_{2} \subseteq \mathbb{R}^{n_{2}}$ are two lattices. By a gluing of $\Lambda_{1}$ and $\Lambda_{2}$ we mean a finite-index
sublattice $\Lambda \subseteq \Lambda_{1} \oplus \Lambda_{2}$ whose
projections onto the two coordinate spaces are surjective. The simplest example is the checkerboard
sublattice
$$
\Lambda_{\mathrm{chk}}
=\{(x,y)\in\mathbb Z^2:x\equiv y\pmod 2\}
\subseteq \mathbb Z\oplus\mathbb Z.
$$
Although each coordinate projection is still all of $\mathbb Z$, the two coordinates are no longer independent. They must have the same parity. The highlighted points in~\cref{fig:checkerboard-gluing} illustrate
this elementary mechanism. 

\begin{figure}[htbp]
    \centering
    \begin{tikzpicture}[scale=0.62]
        \draw[step=1,gray!35,very thin] (-3.4,-3.4) grid (3.4,3.4);
        \draw[->,gray!70] (-3.7,0)--(3.8,0) node[right] {$x$};
        \draw[->,gray!70] (0,-3.7)--(0,3.8) node[above] {$y$};
        \foreach \x in {-3,...,3} {
            \foreach \y in {-3,...,3} {
                \pgfmathtruncatemacro{\parity}{mod(\x+\y,2)}
                \ifnum\parity=0
                    \fill[blue!65!black] (\x,\y) circle (2.7pt);
                \else
                    \fill[gray!55] (\x,\y) circle (1.8pt);
                \fi
            }
        }
    \end{tikzpicture}
    \caption{The checkerboard sublattice $\Lambda_{\mathrm{chk}}$ (highlighted points) inside $\mathbb Z^2$.}
    \label{fig:checkerboard-gluing}
\end{figure}

Our construction uses the same principle on several cyclotomic components.  A naive direct product of cyclotomic components is
not sufficient for our purposes. It contains many vectors supported on only one or a few components, which may have smaller cyclic orbits and thereby offset the gain coming from the symmetry. The gluing forces vectors supported on a proper subset of the components to satisfy additional divisibility conditions. These conditions are quantified by cyclotomic resultants, which determine the covolumes of the corresponding support lattices.

The gluing is combined with a volume-preserving rescaling of the individual
cyclotomic components. For each support $S$, the geometric effect
of restricting to $S$, the arithmetic penalty coming from gluing, and the
effect of the rescaling are collected into a single support exponent. This quantity controls the corresponding Haar-averaged short-vector contribution. The main task is therefore to arrange that this exponent is uniformly negative on every proper
support. At the same time, we need the order \(M\) of the cyclic symmetry to be large compared with the rank \(C\) of the cyclotomic order. The ratio \(M/C\) measures the symmetry gain relative to the dimensional cost of the construction, and the stochastic argument ultimately produces a packing density of order
$
2^{-d} d^2 M/C.
$
We therefore seek cyclotomic data for which all proper supports are uniformly suppressed while $M/C \gg\log\log d.$ To achieve this, we encode the resultant interactions by a weighted graph and the conditions are reduced to a combinatorial
problem on this graph.  The particular sets of cyclotomic indices used in our construction are assembled from divisor cubes.

The inputs from gluing, rescaling and Haar averaging are enough to obtain the bound
$\Delta_d^L\gtrsim d\log\log d\,2^{-d}$ in every sufficiently large dimension; see Appendix \ref{app:deterministic-packing}. To obtain the additional factor of $d$, we incorporate Klartag's stochastically evolving ellipsoid. As in Abuya--Gargava--Zhao~\cite{AbuyaGargavaZhao}, the stochastic
deformation is restricted to quadratic forms invariant under the cyclic
symmetry of the construction. Full-support vectors have large cyclic
orbits, so an entire orbit imposes only one constraint on the invariant
deformation. Proper-support vectors may have smaller orbits, but the gluing
estimates above ensure that their total contribution is negligible. Under the same type of multiplicity hypothesis as in~\cite{AbuyaGargavaZhao}, the stochastic argument then yields the bound of \cref{thm:main}.

\subsection{A toy example illustrating the gluing mechanism}
\label{subsec:toy-gluing}

We illustrate the gluing mechanism with a toy example built from two ordinary
ensembles of random lattices. It shows why their direct product has too many
vectors on proper supports, and how a finite-index gluing suppresses them.
Let $r\ge2$, set
$
V=\mathbb R^r\oplus\mathbb R^r
$
and consider first the direct product lattice
$
\Lambda_0=\mathbb Z^r\oplus\mathbb Z^r.
$
To randomize the two factors independently, let
$
G=
\operatorname{SL}_r(\mathbb R)
\times
\operatorname{SL}_r(\mathbb R)
$
act componentwise on $V$.  Thus
$
(g_1,g_2)\Lambda_0
=
g_1\mathbb Z^r\oplus g_2\mathbb Z^r.
$
Here $g_1\mathbb Z^r$ and $g_2\mathbb Z^r$ are independent random
    unimodular lattices, each distributed according to Haar probability
    measure. Siegel's mean-value formula in the two factors gives, for every nonnegative
bounded compactly supported Borel function
$f:\mathbb R^{2r}\to\mathbb R$,
\begin{equation*}
    \begin{aligned}
        \mathbb E_{g_1,g_2}
        \sum_{(u,v)\in\mathbb Z^r\oplus\mathbb Z^r}
        f(g_1u,g_2v)
        =
        f(0,0)
        +
        \int_{\mathbb R^r}f(x,0),dx
        +
        \int_{\mathbb R^r}f(0,y),dy
        +
        \int_{\mathbb R^{2r}}f(x,y),dx,dy.
    \end{aligned}
    \label{eq:toy-product-mean}
\end{equation*}
The two middle terms are the contributions of vectors supported on only
one of the two components. They are not negligible. For $s\ge1$ and $R>0$, let $B_s(R)$
denote the Euclidean ball of radius $R$ centered at the origin
in $\mathbb R^s$. Taking $f=\mathbf1_{B_{2r}(t)}$ in the preceding
mean-value formula gives
\[
\mathbb E_g|g\Lambda_0\cap B_{2r}(t)|
=
1+V_{2r}(t)+2V_r(t).
\]
To compare the contributions from proper and full supports,
we use the volume formula and Stirling's approximation to obtain
$$
\frac{V_r(t)}{V_{2r}(t)}
=
2^{r/2}
\left(\frac{2}{\pi r}\right)^{1/4}
V_{2r}(t)^{-1/2}(1+o(1)).
$$
Thus, if the ambient volume $V_{2r}(t)$ is only polynomial in $r$,
the terms from proper supports are exponentially larger than the
full-dimensional term.  A direct product of good ensembles of random lattices
therefore does not behave like a good random ensemble in the product
dimension.

The point of gluing is to leave the ambient lattice essentially unchanged
while making vectors confined to a proper support arithmetically expensive. For example, consider the sublattice
\begin{equation*}
    \Lambda_3
    :=
    \bigl \{
        (u,v)\in\mathbb Z^r\oplus\mathbb Z^r:
        u\equiv v\pmod3
    \bigr\}.
    \label{eq:toy-glued-lattice}
\end{equation*}
It has index
$
[\mathbb Z^{2r}:\Lambda_3]=3^r.
$
If the second component vanishes, then the congruence
condition forces the first component to be divisible by $3$. That is,
$
\Lambda_3\cap(\mathbb R^r\oplus\{0\})
=
3\mathbb Z^r\oplus\{0\},
$
and similarly
$
\Lambda_3\cap(\{0\}\oplus\mathbb R^r)
=
\{0\}\oplus3\mathbb Z^r.
$
Normalize the ambient lattice to covolume one by setting
$
\widetilde\Lambda_3
:=
3^{-1/2}\Lambda_3.
$
Then each lattice supported on one coordinate subspace has covolume
$
3^{r/2}.
$
Averaging over the normalized Haar ensemble of glued lattices,
the supportwise Siegel mean-value formula gives 
\begin{equation*}
    \mathbb E_g
    \left|g\widetilde\Lambda_3\cap B_{2r}(t)\right|
    =
    1
    +
    V_{2r}(t)
    +
    2 \times 3^{-r/2}V_r(t).
    \label{eq:toy-glued-mean}
\end{equation*}
Thus the ratio of either proper-support contribution to the full-support contribution is
$$
3^{-r/2}
\frac{V_r(t)}{V_{2r}(t)}
=
\left(\frac23\right)^{r/2}
\left(\frac{2}{\pi r}\right)^{1/4}
V_{2r}(t)^{-1/2}(1+o(1)).
$$
Hence, at polynomial ambient volume, the two contributions from proper supports
are exponentially small. This toy example captures the mechanism used below. Gluing converts the vanishing of some components into divisibility conditions on the remaining ones, increasing the covolume of the corresponding support lattice and thereby suppressing vectors of proper support.

\section{The cyclotomic gluing framework and Haar averaging}
\label{sec:gluing-framework}

We introduce the underlying cyclotomic order and its cyclic action, then compute
the indices associated with prescribed supports. Haar averaging converts these
indices into lattice-point estimates for each support, leading to a criterion that
ensures vectors of proper support are exponentially rare. 

\subsection{Cyclotomic orders and the cyclic action}
\label{subsec:cyclotomic-order}

For $n\ge1$, let $\Phi_n(X)$ denote the $n$-th cyclotomic polynomial and
let $\zeta_n=e^{2\pi i/n}$. Let $c_n := \varphi(n)$. Put
$$
K_n:=\mathbb Q(\zeta_n),
\qquad
\mathcal O_n:=\mathbb Z[\zeta_n].
$$ 
For a finite set $\mathcal N\subset\mathbb N$, define
$$
F(X)=\prod_{n\in\mathcal N}\Phi_n(X),
\qquad
\mathcal A=\mathbb Z[X]/(F).
$$
Since $F$ is monic of degree
$
C=\sum_{n\in\mathcal N}\varphi(n),
$
the ring $\mathcal A$ is a free $\mathbb Z$-module of rank $C$. 
Because the $\Phi_n$ are the pairwise coprime minimal polynomials of the
$\zeta_n$, evaluation at the roots $\zeta_n$ gives an injective homomorphism
$$
\mathcal A
\hookrightarrow
\mathcal O
:=
\bigoplus_{n\in\mathcal N} \mathcal O_n.
$$
Since $\mathcal A$ and $\mathcal O$ are free $\mathbb Z$-modules of the same rank $C$, the image of $\mathcal A$ in $\mathcal O$ has finite index. After tensoring with $\mathbb Q$, the Chinese remainder theorem gives
$$
\mathcal A\otimes_{\mathbb Z}\mathbb Q
\simeq
\bigoplus_{n\in\mathcal N} K_n.
$$
We refer to the summands on the right as the cyclotomic components. This
decomposition generally exists only over $\mathbb Q$. Integrally, $\mathcal A$
need not equal the direct sum $\mathcal O$. Thus membership in $\mathcal A$
imposes arithmetic compatibility conditions between the different cyclotomic
components. We refer to these conditions as gluing, in analogy with the
lattice-theoretic terminology of Conway--Sloane~\cite{ConwaySloane}. For
$v=(v_n)_{n\in\mathcal N}\in\bigoplus_{n\in\mathcal N}K_n$, its support
$\operatorname{supp}(v)$ is the set of indices $n$ for which $v_n\ne0$. We
say that $v$ has full support if $\operatorname{supp}(v)=\mathcal N$, and
proper support if $\operatorname{supp}(v)\subsetneq\mathcal N$. When we discuss
nonzero vectors of proper support, the support is understood to be nonempty.
We will seek choices of $\mathcal N$ for which gluing strongly penalizes
elements supported on only a proper subset of the components. The
corresponding indices, which quantify the strength of this gluing, will be
computed in the next subsection.

\begin{example}[Gluing of two components]
    \label{ex:two-component-gluing}
    Take $\mathcal N=\{1,3\}.$
    Then
    $$
    \mathcal A
    =
    \mathbb Z[X]/\bigl((X-1)\Phi_3(X)\bigr)
    \hookrightarrow
    \mathcal O
    =
    \mathbb Z\oplus\mathbb Z[\zeta_3].
    $$
    Writing $\beta=m+n\zeta_3$, a pair
    $(a,\beta)\in\mathbb Z\oplus\mathbb Z[\zeta_3]$ lies in the image of
    $\mathcal A$ precisely when
    $$
    a\equiv m+n\pmod 3.
    $$
    In particular, if $a=0$, so that the element is supported only on the cyclotomic component corresponding to $n=3$, then
    $
    m+n\equiv0\pmod3.
    $
    Equivalently, the possible values of $\beta$ form the sublattice
    $
    (1-\zeta_3)\mathbb Z[\zeta_3]
    \subset \mathbb Z[\zeta_3],
    $
    which has index $3$, since
    $
    \mathbb Z[\zeta_3]\big/
    (1-\zeta_3)\mathbb Z[\zeta_3]
    \simeq \mathbb F_3.
    $
    The point of the example is that forcing one cyclotomic component to
    vanish imposes a nontrivial restriction on the remaining component.
    Indeed, it forces it into a proper sublattice.  This gluing condition is illustrated in \cref{fig:two-component-gluing}.
\end{example}

\begin{figure}[ht]
    \centering

    \definecolor{resA}{RGB}{64,145,140}
    \definecolor{resB}{RGB}{226,140,67}
    \definecolor{resC}{RGB}{140,110,180}
    \definecolor{planeFill}{RGB}{245,247,250}
    \definecolor{planeEdge}{RGB}{170,176,184}
    \definecolor{forbidden}{RGB}{205,210,216}
    \definecolor{textgray}{RGB}{90,90,90}

    \begin{tikzpicture}[scale=1.0, every node/.style={font=\small}]

        \tikzset{
            allowedA/.style={circle, fill=resA, inner sep=1.55pt},
            allowedB/.style={circle, fill=resB, inner sep=1.55pt},
            allowedC/.style={circle, fill=resC, inner sep=1.55pt},
            forbiddenpt/.style={circle, fill=forbidden, inner sep=0.9pt},
            plane/.style={fill=planeFill, draw=planeEdge, line width=0.4pt},
            subtitle/.style={text=textgray},
        }


        \begin{scope}[xshift=0cm]

            \node[subtitle] at (0,5.25)
                {$\mathcal O=\mathbb Z\oplus\mathbb Z[\zeta_3]$};

            \foreach \a in {0,1,2} {
                \pgfmathsetmacro{\sx}{-0.38*\a}
                \pgfmathsetmacro{\sy}{1.38*\a}

                \begin{scope}[shift={(\sx,\sy)}]

                    \path[plane]
                        (-0.2,-0.2)
                        -- (3.2,-0.2)
                        -- (2.15,1.75)
                        -- (-1.25,1.75)
                        -- cycle;

                    \foreach \m in {0,...,6} {
                        \foreach \n in {0,...,4} {

                            \pgfmathsetmacro{\x}{0.54*\m-0.27*\n}
                            \pgfmathsetmacro{\y}{0.47*\n}
                            \pgfmathtruncatemacro{\r}{mod(\m+\n,3)}

                            \ifnum\r=0
                                \node[allowedA] at (\x,\y) {};
                            \fi
                            \ifnum\r=1
                                \node[allowedB] at (\x,\y) {};
                            \fi
                            \ifnum\r=2
                                \node[allowedC] at (\x,\y) {};
                            \fi
                        }
                    }

                    \node[left,text=textgray] at (-1.25,0.75) {$a=\a$};

                \end{scope}
            }

            \draw[->,thick]
                (-1.95,0.15) -- (-2.65,3.95)
                node[above] {$a\in\mathbb Z$};

            \node[align=center] at (0.9,-0.55)
                {\small every pair $(a,\beta)$ is allowed};

        \end{scope}


        \begin{scope}[xshift=8.4cm]

            \node[subtitle] at (0,5.25)
                {$\mathcal A\subset \mathbb Z\oplus\mathbb Z[\zeta_3]$};

            \foreach \a in {0,1,2} {
                \pgfmathsetmacro{\sx}{-0.38*\a}
                \pgfmathsetmacro{\sy}{1.38*\a}

                \begin{scope}[shift={(\sx,\sy)}]

                    \path[plane]
                        (-0.2,-0.2)
                        -- (3.2,-0.2)
                        -- (2.15,1.75)
                        -- (-1.25,1.75)
                        -- cycle;

                    \foreach \m in {0,...,6} {
                        \foreach \n in {0,...,4} {

                            \pgfmathsetmacro{\x}{0.54*\m-0.27*\n}
                            \pgfmathsetmacro{\y}{0.47*\n}
                            \pgfmathtruncatemacro{\r}{mod(\m+\n,3)}
                            \pgfmathtruncatemacro{\ok}{mod(\m+\n-\a,3)}

                            \ifnum\ok=0
                                \ifnum\r=0
                                    \node[allowedA] at (\x,\y) {};
                                \fi
                                \ifnum\r=1
                                    \node[allowedB] at (\x,\y) {};
                                \fi
                                \ifnum\r=2
                                    \node[allowedC] at (\x,\y) {};
                                \fi
                            \else
                                \node[forbiddenpt] at (\x,\y) {};
                            \fi
                        }
                    }

                    \node[left,text=textgray] at (-1.25,0.75) {$a=\a$};

                \end{scope}
            }

            \draw[->,thick]
                (-1.95,0.15) -- (-2.65,3.95)
                node[above] {$a\in\mathbb Z$};

            \node[align=center] at (1.35,-0.55)
                {\small only $m+n\equiv a\pmod{3}$ is allowed};

        \end{scope}


        \begin{scope}[yshift=-1.5cm,xshift=-1.8cm]

            \node[allowedA] at (0.6,0) {};
            \node[anchor=west] at (0.78,0)
                {\small $m+n\equiv0\pmod{3}$};

            \node[allowedB] at (4.75,0) {};
            \node[anchor=west] at (4.93,0)
                {\small $m+n\equiv1\pmod{3}$};

            \node[allowedC] at (8.9,0) {};
            \node[anchor=west] at (9.08,0)
                {\small $m+n\equiv2\pmod{3}$};

        \end{scope}

    \end{tikzpicture}

    \caption{
        The gluing for the index set $\mathcal N=\{1,3\}$.
        On the left, in the ambient product $\mathcal O$, the two cyclotomic
        components vary independently.
        On the right, the gluing condition leaves only one of the three residue classes
        of the triangular lattice on each layer indexed by $a$.
    }
    \label{fig:two-component-gluing}
\end{figure}

We now describe the cyclic action. Let
$
M=\operatorname{lcm}\{n:n\in\mathcal N\}.
$
For every nonempty $S\subseteq\mathcal N$, write
$$
    M_S=\operatorname{lcm}\{n:n\in S\}.
$$
Then $M_{\mathcal N}=M$.
Since $F(X)\mid X^M-1$, multiplication by $X$ defines an automorphism
$\tau$ of $\mathcal A$ satisfying $\tau^M=\mathrm{Id}$. Thus $\tau$
defines a cyclic action of order $M$ on $\mathcal A$ and on
$\mathcal A\otimes_{\mathbb Z}\mathbb Q$. Under the decomposition
$
\mathcal A\otimes_{\mathbb Z}\mathbb Q
\simeq
\bigoplus_{n\in\mathcal N}K_n,
$
the class of $X$ maps to $(\zeta_n)_{n\in\mathcal N}$.  Consequently,
$
(\tau^jv)_n=\zeta_n^jv_n,
$
and so the restriction of $\tau$ to the $n$-th cyclotomic component has order $n$. For $v\in\mathcal A\otimes_{\mathbb Z}\mathbb Q$, let $ \operatorname{Orb}(v) := \{\tau^jv:j\in\mathbb Z\} $ denote its orbit under the cyclic action.

\begin{lemma} \label{lem:orbit}
    Let $0\ne v\in\mathcal A\otimes_{\mathbb Z}\mathbb Q$ with support
    $S\subset\mathcal N$. Then
    $$
    \#\operatorname{Orb}(v)
    =
    M_S.
    $$

\end{lemma}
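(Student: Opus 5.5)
The plan is to compute the stabilizer of $v$ in the cyclic group $\langle\tau\rangle\cong\mathbb Z/M\mathbb Z$ and apply orbit--stabilizer. Since $\tau^M=\mathrm{Id}$, the orbit of $v$ is in bijection with $\mathbb Z/M\mathbb Z$ modulo the stabilizer subgroup $H_v=\{j\in\mathbb Z/M\mathbb Z:\tau^jv=v\}$, so $\#\operatorname{Orb}(v)=M/\#H_v$. It therefore suffices to show that $H_v$ is exactly the subgroup $M_S\mathbb Z/M\mathbb Z$, which has order $M/M_S$. (Note $M_S\mid M$, so this subgroup makes sense.)

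To identify $H_v$, use the componentwise description $(\tau^jv)_n=\zeta_n^jv_n$ established just before the lemma. The condition $\tau^jv=v$ is equivalent to $(\zeta_n^j-1)v_n=0$ for every $n\in\mathcal N$. Each $K_n$ is a field, so for $n\notin S$ the condition holds automatically because $v_n=0$. For $n\in S$ we have $v_n\ne0$, so the condition becomes $\zeta_n^j=1$, i.e.\ $n\mid j$, since $\zeta_n$ has exact order $n$. Hence $\tau^jv=v$ if and only if $n\mid j$ for all $n\in S$, which is equivalent to $M_S\mid j$ by the definition of least common multiple. This gives $H_v=M_S\mathbb Z/M\mathbb Z$ and hence $\#\operatorname{Orb}(v)=M_S$.

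There is no real obstacle. The only points needing care are that the identification $\mathcal A\otimes\mathbb Q\simeq\bigoplus K_n$ is $\tau$-equivariant with $X\mapsto(\zeta_n)$, which the paper has already recorded, and that the $K_n$ have no zero divisors, so $(\zeta_n^j-1)v_n=0$ with $v_n\neq0$ forces $\zeta_n^j=1$. The hypothesis $v\ne0$ ensures $S\ne\emptyset$, so $M_S$ is well defined.
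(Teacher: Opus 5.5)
Your proposal is correct and follows essentially the same route as the paper: both reduce $\tau^j v = v$ to $\zeta_n^j = 1$ for each $n \in S$, i.e.\ $n \mid j$ for all $n\in S$, hence $M_S \mid j$. The only difference is that you make the orbit--stabilizer step explicit, whereas the paper simply takes the least positive $j$ fixing $v$ as the orbit size.
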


\begin{proof}
    For $n\in S$, the $n$-th component is fixed by $\tau^j$ if and only if
    $\zeta_n^j=1$, equivalently $n\mid j$.  Hence $v$ is fixed by
    $\tau^j$ precisely when $j$ is divisible by every $n\in S$.  The
    least positive such $j$ is
    $
    \operatorname{lcm}\{n:n\in S\}.
    $
\end{proof}

Thus every vector of full support has orbit size $M$, while the only vectors that can fail to have cyclic symmetry of order $M$ are vectors of proper support.

\subsection{Cyclotomic resultants and support indices}
\label{subsec:resultants}

We now compute the indices that arise when elements of $\mathcal A$ are
restricted to prescribed sets of cyclotomic components.  These indices
will later determine the covolumes of the corresponding 
sublattices. 
For $S\subset\mathcal N$, write
$$
\mathcal O_S=\bigoplus_{n\in S}\mathbb Z[\zeta_n],
\qquad
K_S=\bigoplus_{n\in S}\mathbb Q(\zeta_n),
\qquad
c_S=\sum_{n\in S} c_n.
$$
For monic polynomials $f,g\in\mathbb Z[X]$, let $\operatorname{Res}(f,g)$ denote their resultant. Put
$$
I_{\mathrm{all}}
=
\prod_{\{m,n\}\subset\mathcal N}
\left|\operatorname{Res}(\Phi_m,\Phi_n)\right|.
$$
For $S\subset\mathcal N$, we separate the pairs involving components in
$S$ into two types. Define 
$$
I_{\mathrm{internal}}(S)
=
\prod_{\{m,n\}\subset S}
\left|\operatorname{Res}(\Phi_m,\Phi_n)\right| , \qquad  
I_{\mathrm{cross}}(S)
=
\prod_{\substack{m\in S\\ n\in \mathcal N\setminus S}}
\left|\operatorname{Res}(\Phi_m,\Phi_n)\right|. 
$$
The following determinant calculation already appears in Mahatab--Sampath~\cite{MahatabSampath}.
\begin{lemma} 
    \label{lem:support-index}
    We have
    $$
    [\mathcal O:\mathcal A]=I_{\mathrm{all}},
    $$
    and, for every $S\subset\mathcal N$,
    $$
    [\mathcal O_S:\mathcal A\cap K_S]
    =
    I_{\mathrm{internal}}(S)I_{\mathrm{cross}}(S).
    $$
\end{lemma}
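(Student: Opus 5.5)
We prove the first identity by a discriminant comparison and deduce the support formula from the factorization $\mathcal A\cap K_S\simeq(F/F_S)\,\mathbb Z[X]/(F)$. Write $F_S=\prod_{n\in S}\Phi_n$ and $F_{S^c}=\prod_{n\in\mathcal N\setminus S}\Phi_n$, so that $F=F_SF_{S^c}$.

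\textbf{The full index.} The embedding $\mathcal A\hookrightarrow\mathcal O$ becomes an isomorphism after $\otimes\mathbb Q$. Hence
$$
\operatorname{disc}(\mathcal A)=[\mathcal O:\mathcal A]^2\operatorname{disc}(\mathcal O),
$$
with discriminants taken with respect to the trace form of $\bigoplus K_n$. Three facts then suffice:
\begin{itemize}
\item $\operatorname{disc}(\mathcal A)=\pm\operatorname{disc}(F)$, using the power basis $1,X,\dots,X^{C-1}$;
\item $\operatorname{disc}(\mathcal O)=\prod_n\operatorname{disc}(\Phi_n)$, since $\mathcal O_n=\mathbb Z[\zeta_n]$ has a power basis;
\item $\operatorname{disc}(fg)=\operatorname{disc}(f)\operatorname{disc}(g)\operatorname{Res}(f,g)^2$.
\end{itemize}
Combining them gives $[\mathcal O:\mathcal A]^2=\prod_{\{m,n\}}\operatorname{Res}(\Phi_m,\Phi_n)^2$, which is the first claim.

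\textbf{The support index.} An element of $\mathcal A$ lies in $K_S$ exactly when its class $g\bmod F$ vanishes at all $\zeta_n$ with $n\notin S$. Since the $\Phi_n$ are pairwise coprime, this means $F_{S^c}\mid g$, by Gauss's lemma because $F_{S^c}$ is monic. Therefore
$$
\mathcal A\cap K_S=F_{S^c}\,\mathbb Z[X]/(F),
$$
and multiplication by $F_{S^c}$ is an isomorphism $\mathbb Z[X]/(F_S)\xrightarrow{\ \sim\ }\mathcal A\cap K_S$. Under the embedding into $\mathcal O_S$, this map sends $h$ to $\bigl(h(\zeta_n)F_{S^c}(\zeta_n)\bigr)_{n\in S}$. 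It factors as
$$
\mathbb Z[X]/(F_S)\hookrightarrow\mathcal O_S\xrightarrow{\;\mu\;}\mathcal O_S,
$$
where $\mu$ is componentwise multiplication by $F_{S^c}(\zeta_n)$. Taking indices,
$$
[\mathcal O_S:\mathcal A\cap K_S]=[\mathcal O_S:\mathbb Z[X]/(F_S)]\cdot\bigl|\det\mu\bigr|.
$$
The first factor equals $I_{\mathrm{internal}}(S)$ by the full-index case applied to the index set $S$.

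For the second factor, the $\mathbb Z$-determinant of multiplication by $\alpha\in\mathcal O_n$ is $N_{K_n/\mathbb Q}(\alpha)$. Using $\operatorname{Res}(f,g)=\prod_{f(\theta)=0}g(\theta)$ for monic $f$, we get
$$
\prod_{n\in S}\bigl|N_{K_n/\mathbb Q}(F_{S^c}(\zeta_n))\bigr|=\prod_{n\in S}\ \prod_{m\notin S}\bigl|\operatorname{Res}(\Phi_n,\Phi_m)\bigr|=I_{\mathrm{cross}}(S).
$$
Multiplying the two factors gives $[\mathcal O_S:\mathcal A\cap K_S]=I_{\mathrm{internal}}(S)\,I_{\mathrm{cross}}(S)$.

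\textbf{Main obstacle.} The delicate step is the first identity, specifically the discriminant comparison. One must check that $\mathcal A$ and $\mathcal O$ are compared under the same bilinear form, namely the trace form on $\bigoplus K_n$. One must also check that $\operatorname{disc}$ of the power basis of $\mathbb Z[X]/(F)$ equals $\operatorname{disc}(F)$ even though $F$ is not irreducible. This holds because the trace form is computed from the roots of the separable polynomial $F$ via a Vandermonde matrix.

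Alternatively, one can avoid discriminants and get the full index by induction on $|\mathcal N|$. The exact sequence
$$
0\to F_{\mathcal N\setminus\{n\}}\mathbb Z[X]/(F)\to\mathcal A\to\mathbb Z[X]/\bigl(F_{\mathcal N\setminus\{n\}}\bigr)\to0
$$
combined with the support computation above (which does not use the full-index statement for $\mathcal N$) gives the result.
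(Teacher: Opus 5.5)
Your proof is correct. The support half is the same as the paper's. The paper also writes $\mathcal A\cap K_S=F_{S^c}\,\mathbb Z[X]/(F_S)$, takes $[\mathcal O_S:\mathbb Z[X]/(F_S)]=I_{\mathrm{internal}}(S)$ from the first identity, and computes the remaining factor as $\prod_{m\in S}\bigl|N_{K_m/\mathbb Q}(F_{S^c}(\zeta_m))\bigr|$. You measure $|\det\mu|$ on $\mathcal O_S$ rather than on $\mathbb Z[X]/(F_S)$, but this gives the same number, because the determinant of a $\mathbb Q$-linear map does not depend on the lattice used to compute it.

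The genuine difference is in the full index. The paper simply asserts that the evaluation map $\mathbb Z[X]/(F)\to\bigoplus_n\mathbb Z[\zeta_n]$, written in the monomial and power bases, has determinant $\pm\prod_{\{m,n\}}\operatorname{Res}(\Phi_m,\Phi_n)$. Implicitly, after complexifying, this is the ratio of the Vandermonde determinant of all roots of $F$ to the product of the Vandermonde determinants of the roots of each $\Phi_n$. You compute the square of that determinant instead, using $\operatorname{disc}(\mathcal A)=[\mathcal O:\mathcal A]^2\operatorname{disc}(\mathcal O)$ and $\operatorname{disc}(fg)=\operatorname{disc}(f)\operatorname{disc}(g)\operatorname{Res}(f,g)^2$. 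This reduces the key step to a standard identity, so it actually supplies the justification the paper leaves implicit. The cost is setting up the trace form on $\bigoplus_n K_n$, which you handle correctly: the Gram matrix of the power basis of $\mathcal A$ is $V^{T}V$ for the Vandermonde matrix $V$ of the roots of $F$, and the Gram matrix of $\mathcal O$ is block diagonal. The paper's route, by contrast, yields the determinant itself, sign included.

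Your inductive alternative is also sound. For $S=\{n\}$ the support computation needs only $[\mathcal O_n:\mathbb Z[\zeta_n]]=1$. The snake lemma, applied to your exact sequence together with $0\to\mathcal O_n\to\mathcal O\to\mathcal O_{\mathcal N\setminus\{n\}}\to0$, then gives
\[
[\mathcal O:\mathcal A]=\prod_{m\ne n}\bigl|\operatorname{Res}(\Phi_n,\Phi_m)\bigr|\cdot\bigl[\mathcal O_{\mathcal N\setminus\{n\}}:\mathbb Z[X]/(F_{\mathcal N\setminus\{n\}})\bigr].
\]
This is the most elementary of the three arguments, since it uses only the norm computation that both proofs already need.
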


\begin{proof}
    The first identity follows by computing the determinant of the evaluation map
    $
    \mathbb Z[X]/(F)
    \to
    \bigoplus_{n\in\mathcal N}\mathbb Z[\zeta_n].
    $
    With respect to the monomial basis on the left and the power basis of each
    cyclotomic ring on the right, its determinant is, up to sign,
    $
    \prod_{\{m,n\}\subset\mathcal N}
    \operatorname{Res}(\Phi_m,\Phi_n).
    $
    Taking absolute values therefore gives
    $
    [\mathcal O:\mathcal A]=I_{\mathrm{all}}.
    $
    Now fix $S\subset\mathcal N$ and set
    $$
    F_{S^c}=\prod_{n\notin S}\Phi_n,
    \qquad
    \mathcal A_S=\mathbb Z[X]\Big/\Bigl(\prod_{m\in S}\Phi_m\Bigr),
    $$
    viewing $\mathcal A_S$ as an order in $K_S$ via evaluation.
    An element of $\mathcal A$ lies in $K_S$ precisely when its polynomial
    representative vanishes at every $\zeta_n$ with $n\notin S$, or
    equivalently is divisible by $F_{S^c}$.  Hence
    $
    \mathcal A\cap K_S=F_{S^c}\mathcal A_S.
    $
    By the first identity, we have 
    $
    [\mathcal O_S:\mathcal A_S]=I_{\mathrm{internal}}(S).
    $
    For a number field $K$, write $N_{K/\mathbb Q}$ for its field norm.
    Multiplication by $F_{S^c}$ on the $m$-th component has absolute 
    determinant
    $$
    \left|N_{K_m/\mathbb Q}\bigl(F_{S^c}(\zeta_m)\bigr)\right|
    =
    \prod_{n\notin S}
    \left|N_{K_m/\mathbb Q}\bigl(\Phi_n(\zeta_m)\bigr)\right|
    =
    \prod_{n\notin S}
    \left|\operatorname{Res}(\Phi_m,\Phi_n)\right|,
    $$
    so taking the product over $m\in S$ gives
    $
    [\mathcal A_S:F_{S^c}\mathcal A_S]
    =I_{\mathrm{cross}}(S).
    $
    Multiplicativity of index now gives the result.

\end{proof}

We also record the formula for resultants of cyclotomic polynomials \cite{Apostol}. For $m\ne n$, we have 
$$
\left|\operatorname{Res}(\Phi_m,\Phi_n)\right|
=
\begin{cases}
    \ell^{\varphi(\min\{m,n\})},
 &
 \displaystyle
 \frac{\max\{m,n\}}{\min\{m,n\}}
 \text{ is a positive power of the prime }\ell,
 \\[1ex]
 1,
 &\text{otherwise}.
\end{cases}
$$
It will be convenient to work with logarithmic indices. We therefore set 
$
I=\log I_{\mathrm{all}}
$
and
$$
I_{\mathrm{inc}}(S)
=
\log\!\left(
    I_{\mathrm{internal}}(S)I_{\mathrm{cross}}(S)
\right) = 
\sum_{\substack{\{m,n\}\subset\mathcal N\\
\{m,n\}\cap S\ne\varnothing}}
\log\left|\operatorname{Res}(\Phi_m,\Phi_n)\right|.
$$
Thus $I$ is the total logarithmic resultant weight of all pairs of
cyclotomic components, while $I_{\mathrm{inc}}(S)$ is the total weight
of those pairs having at least one endpoint in $S$.

\subsection{The glued module in dimension \texorpdfstring{$d$}{d}}
\label{subsec:exact-dimension}
From now on, assume that $1\in\mathcal N$. The order $\mathcal A$ has rank $C$ as a $\mathbb Z$-module, so
$\mathcal A^r$ has rank $Cr$. To work
in an arbitrary prescribed dimension $d$, we place the remaining coordinates in the $n=1$ component, on which the cyclic action
is trivial. This adjusts the dimension without introducing a new
cyclotomic component or changing the order of the cyclic action. Accordingly, write
$$
d=Cr+a,
\qquad
0\le a<C,
$$
where $r=\lfloor d/C\rfloor$, and define the glued module of dimension $d$
\begin{equation}
    L:=\mathcal A^r\oplus\mathbb Z^a.
    \label{eq:exact-module}
\end{equation}
It will serve as the arithmetic base module for the Haar probability ensemble. Define the multiplicities
$$
r_n
:=
\begin{cases}
    r+a,& n=1,\\
    r,& n\ne1.
\end{cases}
$$
Indeed, the rational cyclotomic decomposition
$
\mathcal A\otimes_{\mathbb Z}\mathbb Q
\simeq
\bigoplus_{n\in\mathcal N}K_n
$
gives
$$
L\otimes_{\mathbb Z}\mathbb Q
\simeq
K_1^{r+a}
\oplus
\bigoplus_{n\in\mathcal N\setminus\{1\}}K_n^r
=
\bigoplus_{n\in\mathcal N}K_n^{r_n}.
$$
Multiplication by $X$ defines an automorphism $\tau$ of
$\mathcal A$.  We let $\tau$ act diagonally on $\mathcal A^r$ and
trivially on the additional factor $\mathbb Z^a$.  Thus $L$ inherits the cyclic action from the cyclotomic order.

Passing to real vector spaces, set
$$
V_n
:=
\left(K_n\otimes_{\mathbb Q}\mathbb R\right)^{r_n},
\qquad
V:=\bigoplus_{n\in\mathcal N}V_n.
$$
Then
$
V\simeq L\otimes_{\mathbb Z}\mathbb R,
$
and under this identification $L$ is a full lattice in $V$. Concretely, realification turns the preceding algebraic construction into
an ordinary Euclidean lattice. In \cref{ex:two-component-gluing}, which
corresponds to $r=1$ and $a=0$, we have
$$
\mathcal A\otimes_{\mathbb Z}\mathbb R
\simeq
\bigl(\mathbb Q\oplus\mathbb Q(\zeta_3)\bigr)
\otimes_{\mathbb Q}\mathbb R
\simeq
\mathbb R\oplus\mathbb C
\simeq
\mathbb R^3,
$$
and $\mathcal A$ becomes a lattice of rank $3$ in this space. The stacked
planes in \cref{fig:two-component-gluing} represent the
$\mathbb R\oplus\mathbb C$ decomposition. The vertical direction is the $n=1$ component, while each horizontal plane is a copy of the $n=3$ component. 

For $S\subseteq\mathcal N$, define
$$
V_S:=\bigoplus_{n\in S}V_n,
\qquad
L_S:=L\cap V_S.
$$
Thus $L_S$ consists of vectors whose support is contained in $S$.
We write $L_S^\circ$ for the subset of vectors whose support is
exactly $S$. Put
$
\varepsilon_S:=\mathbf1_{\{1\in S\}}.
$
Since $\dim_{\mathbb R}(K_n\otimes_{\mathbb Q}\mathbb R)=c_n$, the real
dimension of $V_S$ is
\begin{equation}
    s_S
    =
    rc_S+a\varepsilon_S,
    \qquad
    c_S:=\sum_{n\in S}c_n.
    \label{eq:support-dimension}
\end{equation}

Choose on each $K_n\otimes_{\mathbb Q}\mathbb R$ a
$\tau$-invariant Euclidean structure, rescaled so that
$\mathcal O_n$ has covolume one, and equip $V$ with the corresponding
orthogonal direct sum structure. The index formulas of
\cref{subsec:resultants} give 
$$
\covol(\mathcal A)=e^I,
\qquad
\covol(\mathcal A\cap K_S)=e^{I_{\mathrm{inc}}(S)}.
$$
Taking $r$ copies, and noting that the additional
$\mathbb Z^a$-factor has covolume $1$, yields
\begin{equation*}
    \covol(L)=e^{rI}, \qquad 
    \covol(L_S)=e^{r I_{\mathrm{inc}}(S)}.
\end{equation*}

\subsection{Haar averaging for vectors with prescribed support}
\label{subsec:haar-averaging}

The gluing estimates control the covolumes of the support sublattices
$L_S$, but not directly the number of short vectors in a fixed lattice.
To pass from these covolumes to expected lattice-point counts, we average
the glued lattice $L$ over transformations of determinant one acting
separately on the cyclotomic components.

For sufficiently large $d$, we have $r_n\ge2$ for every
$n\in\mathcal N$.  Let
$$
G
:=
\prod_{n\in\mathcal N}
\operatorname{SL}_{r_n}
\bigl(K_n\otimes_{\mathbb Q}\mathbb R\bigr),
$$
acting componentwise on $V=\bigoplus_n V_n$, and let
$$
\Gamma
:=
\left\{
    \gamma\in\prod_{n\in\mathcal N}\operatorname{SL}_{r_n}(K_n):
    \gamma L=L
\right\}.
$$
The group $\Gamma$ is commensurable with
$
\prod_{n\in\mathcal N}\operatorname{SL}_{r_n}(\mathcal O_n),
$
and is therefore an arithmetic subgroup of
$
\mathbf G
=
\prod_{n\in\mathcal N}
\operatorname{Res}_{K_n/\mathbb Q}
\operatorname{SL}_{r_n}.
$ 
Since $\mathbf G$ is semisimple, it has no nontrivial
$\mathbb Q$-rational characters. Hence, by Borel--Harish-Chandra \cite[Thm.~12.3(c)]{BorelHarishChandra}, the quotient $X_L:=G/\Gamma$ has finite
invariant Haar measure.  We normalize the Haar measure on $X_L$ to a probability measure and
write $\mathbb E_g$ for integration over $g\in X_L$. 

Every $g\in G$ preserves the cyclotomic components and commutes with
$\tau$.  Thus $gL$ is again $\tau$-invariant with the same supports,
and, since $g$ has real determinant one,
$
\covol(gL)=\covol(L)=e^{rI}.
$
Thus $X_L$ is the probability space underlying our Haar ensemble of
lattices $gL$.

The corresponding Siegel mean-value formula for vectors of support $S$
is a version of the Siegel formula over number fields at finite level. See
\cite[Theorem~2.1]{AbuyaGargavaZhao} and
\cite[Theorem~1.2 and the subsequent discussion]{KimAdelicRogers}.

\begin{theorem}[Mean-value formula for vectors with prescribed support]
    \label{lem:mean}
    For every nonempty $S\subseteq\mathcal N$ and every nonnegative bounded
    compactly supported Borel function $f$ on $V_S$,
    \begin{equation}
        \mathbb E_g
        \sum_{v\in L_S^\circ} f(gv)
        =
        \frac{1}{\covol(L_S)}
        \int_{V_S} f(u)\,du.
        \label{eq:supportwise-siegel}
    \end{equation}
    Equivalently, for a nonnegative bounded compactly supported Borel
    function $F$ on $V$,
    $$
    \mathbb E_g
    \sum_{v\in L}F(gv)
    =
    F(0)
    +
    \sum_{\varnothing\ne S\subseteq\mathcal N}
    \frac{1}{\covol(L_S)}
    \int_{V_S}F(u)\,du.
    $$

\end{theorem}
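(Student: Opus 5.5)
The plan is to reduce the statement to a product of classical Siegel mean-value formulas over number fields, one factor for each cyclotomic component in $S$, and then recover the second form by summing over supports. The key structural observation is that $G$ decomposes as $G_S\times G_{S^c}$, where $G_S=\prod_{n\in S}\operatorname{SL}_{r_n}(K_n\otimes\mathbb R)$. The function $v\mapsto f(gv)$ on $L_S^\circ$ depends only on the $G_S$-coordinate of $g$. So the left side of \eqref{eq:supportwise-siegel} is the integral of a function on $X_L$ that is invariant under $G_{S^c}$. Projecting, it becomes an average over the $G_S$-orbit space of the lattice $L_S\subset V_S$, modulo $\Gamma_S$, the image of the stabilizer of $L$ in $G_S$. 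This image is again an arithmetic subgroup, commensurable with $\prod_{n\in S}\operatorname{SL}_{r_n}(\mathcal O_n)$, so by Borel--Harish-Chandra the quotient $G_S/\Gamma_S$ again has finite volume and carries the pushforward probability measure.

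Next I would define the functional $\mu(f)=\mathbb E_g\sum_{v\in L_S^\circ}f(gv)$ on $C_c(V_S)$ and show that it is a Haar measure on $V_S$ up to the excluded sets. It is a positive linear functional and is $G_S$-invariant by invariance of Haar measure on $X_L$. It is finite, since the number of lattice points in a compact set is bounded on compacta of $X_L$ and is integrable by a standard Siegel-domain or reduction-theory bound. By Riesz it is therefore a $G_S$-invariant Radon measure on $V_S$.

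The $G_S$-orbits on $V_S$ are described as follows. In each $V_n=(K_n\otimes\mathbb R)^{r_n}$ with $r_n\ge2$, the group $\operatorname{SL}_{r_n}(K_n\otimes\mathbb R)$ acts transitively on vectors all of whose archimedean coordinates are nonzero. Hence the invariant measure is $c\cdot du$ plus possible pieces on the lower-dimensional sets where some archimedean coordinate vanishes. Those sets meet $L_S^\circ$ trivially: an element of $K_n^{r_n}$ is nonzero in every embedding once it is nonzero. The pieces carried by such sets are excluded by applying the same invariance argument to characteristic functions of these sets. This gives $\mu=c_S\,du$.

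It remains to identify $c_S=1/\covol(L_S)$, which is the main obstacle. I would first try the standard unfolding-plus-scaling argument. Take $f=\mathbf 1_{B(t)}$ and let $t\to\infty$. For a fixed lattice $gL_S$, one has $\#(gL_S\cap B(t))\sim\vol B(t)/\covol(L_S)$. The vectors with support strictly smaller than $S$ contribute a lower-order count $O(t^{s_S-1})$, because $L_{S'}$ has smaller rank. Dominated convergence, justified by a uniform bound $\#(\Lambda\cap B(t))\ll t^{s_S}\,\alpha(\Lambda)$ with $\alpha$ integrable on $G_S/\Gamma_S$ (Schmidt-type $\alpha$-functions restricted to the number-field setting), then gives $c_S\vol B(t)\sim\vol B(t)/\covol(L_S)$. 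Here $\covol(L_S)=e^{rI_{\mathrm{inc}}(S)}$ is already known from the index computation in \cref{lem:support-index}, but the constant does not need it explicitly. Establishing the integrability of $\alpha$ is the delicate point, and if it becomes heavy I would instead invoke \cite[Theorem~2.1]{AbuyaGargavaZhao} and \cite{KimAdelicRogers} for the finite-level Siegel formula over $K_n$, and combine components by the product decomposition.

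Finally, for the second form, I would write $L=\{0\}\sqcup\bigsqcup_{\varnothing\ne S}L_S^\circ$, apply \eqref{eq:supportwise-siegel} to $F|_{V_S}$ for each $S$, and sum, using that $L_S^\circ\subset V_S$ and that $V_S$ is $G$-stable.
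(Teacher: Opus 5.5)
Your proposal is correct and follows essentially the paper's own argument in Appendix~\ref{app:mean-value-gluing}. That argument uses invariance of the averaged counting measure and transitivity of the $\operatorname{SL}_{r_n}$-factors (for $r_n\ge2$) on the open, full-measure set containing all vectors of exact support $S$, which forces a multiple $c_S\,du$ of Lebesgue measure, and then identifies $c_S=\covol(L_S)^{-1}$ by a large-ball scaling limit.

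The differences are cosmetic: the paper treats all supports at once on $V$ and uses products of balls rather than projecting to $G_S/\Gamma_S$. The paper also leaves implicit the interchange of the limit with $\mathbb E_g$ that you flag. Besides your Schmidt-type $\alpha$-bound, that interchange can be justified by dominating the count of $L_S\subseteq\bigoplus_{n\in S}\mathcal O_n^{r_n}$ in a product of balls by the product of the single-field counts, whose averages converge by the single-field Siegel formula, and then applying the generalized dominated convergence theorem.
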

\begin{proof}
    See Appendix \ref{app:mean-value-gluing}.
\end{proof}
Thus the arithmetic effect of the gluing may be computed precisely if
$\covol(L_S)$ can be computed. Large values of $I_{\mathrm{inc}}(S)$ therefore suppress the expected contribution from vectors of support $S$.

The resultant indices determine, for each support $S$, an arithmetic
penalty through the covolume of the corresponding support lattice.
These
penalties alone may not suppress every proper support strongly enough. We therefore
introduce an additional degree of freedom by rescaling the cyclotomic components relative to one another while preserving total volume. That is, we rescale the $n$-th cyclotomic component by $e^{y_n}$ and impose 
$
\sum_{n\in\mathcal N}c_ny_n=0.
$
On the first $Cr$ coordinates, the resulting componentwise rescaling has
determinant
$$
\exp\left(r\sum_{n\in\mathcal N} c_n y_n\right)=1.
$$ In the exact dimension $d=Cr+a$, the additional $a$ coordinates belong
to the $n=1$ component and contribute the factor $e^{ay_1}$. To remove this global volume factor, set
$
\eta=\frac{ay_1}{d}$ and $
\widetilde y_n=y_n-\eta.
$
Then
$$
\sum_{n\in\mathcal N}
\bigl(rc_n+a\mathbf1_{\{n=1\}}\bigr)\widetilde y_n
=
ay_1-d\eta
=
0.
$$
Hence the componentwise scaling
$
T_y
=
\bigoplus_{n\in\mathcal N}
e^{\widetilde y_n}\operatorname{Id}_{V_n}
$
has determinant one. Since both $g$ and $T_y$ have determinant one,
$
\covol(T_ygL)
=
\covol(L)
=
e^{rI}.
$
\subsection{Estimating the mean for a prescribed support}
\label{subsec:supportwise-mean}

For each positive integer $k$, let $v_k$ denote the volume of the
Euclidean unit ball in $\mathbb R^k$.
We therefore set
$
\lambda
=
\left(\frac{v_d}{e^{rI}}\right)^{1/d}
$
and define
\begin{equation}
    \Lambda_g^*
    =
    \lambda T_ygL.
    \label{eq:normalized-haar-lattice}
\end{equation}
Then $\covol(\Lambda_g^*)=v_d.$ Since both $g$ and $T_y$ preserve the cyclotomic components and commute with $\tau$, every $\Lambda_g^*$ is $\tau$-invariant and supports are unchanged.

Fix a nonempty $S\subseteq\mathcal N$, and write $s=s_S=rc_S+a\varepsilon_S.$ Let $B_s$ denote the Euclidean unit ball in the
$s$-dimensional space $V_S$ and define
$$
\mathcal L(S)
:=
\log\left(
    \mathbb E_g
    \left|\left\{
        x\in\Lambda_g^*\cap B_s:
        \operatorname{supp}(x)=S
    \right\}\right|
\right).
$$
Thus $\mathcal L(S)$ is the logarithm of the expected number of
vectors in the unit ball having exact support $S$.

\begin{lemma}[Logarithm of the mean for a prescribed support]
    \label{lem:supportwise-mean}
    For every nonempty $S\subseteq\mathcal N$,
    \begin{align}
        \mathcal L(S)
        ={}
        \log v_s
        -\frac{s}{d}\log v_d
        +\frac{s}{d}rI
        -rI_{\mathrm{inc}}(S)
        -
        \sum_{n\in S}
        \bigl(rc_n+a\mathbf1_{{n=1}}\bigr)\widetilde y_n.
        \label{eq:finite-support}
    \end{align}
\end{lemma}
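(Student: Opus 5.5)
The plan is to rewrite the count in terms of the unnormalized lattice $gL$, apply the mean-value formula of \cref{lem:mean} on $V_S$, and read off the three scaling factors. Since $\Lambda_g^*=\lambda T_y gL$ and $T_y$ acts on $V_n$ by $e^{\widetilde y_n}$, a vector $x=\lambda T_y gv$ with $v\in L$ has the same support as $v$. Moreover $x\in B_s$ if and only if $gv\in (\lambda T_y)^{-1}B_s$, where $(\lambda T_y)^{-1}$ restricted to $V_S$ is the linear map $D_S=\bigoplus_{n\in S}\lambda^{-1}e^{-\widetilde y_n}\operatorname{Id}_{V_n}$. Hence the quantity inside $\mathcal L(S)$ equals
\[
\mathbb E_g\sum_{v\in L_S^\circ}\mathbf 1_{D_SB_s}(gv),
\]
and $f=\mathbf 1_{D_SB_s}$ is a nonnegative bounded compactly supported Borel function on $V_S$.

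By \eqref{eq:supportwise-siegel}, this expectation equals $\vol(D_SB_s)/\covol(L_S)$. The volume is $|\det D_S|\,v_s$. Since $\dim V_n=rc_n+a\mathbf 1_{\{n=1\}}$ and $s=\sum_{n\in S}\dim V_n$,
\[
\log|\det D_S| = -s\log\lambda-\sum_{n\in S}\bigl(rc_n+a\mathbf 1_{\{n=1\}}\bigr)\widetilde y_n .
\]
Substituting $\log\lambda=\frac1d(\log v_d-rI)$ and $\log\covol(L_S)=rI_{\mathrm{inc}}(S)$ from \cref{subsec:exact-dimension} gives exactly \eqref{eq:finite-support} after taking logarithms.

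The only step needing care is the bookkeeping of the real dimension of each $V_n$. For $n=1$ we have $r_1=r+a$ copies of $K_1\otimes\mathbb R=\mathbb R$, so its dimension is $r+a=rc_1+a$. For $n\neq 1$ the dimension is $rc_n$. This matches \eqref{eq:support-dimension}. We must also check that the Euclidean structure on $V_S$ used for $B_s$ and for Lebesgue measure $du$ in \cref{lem:mean} is the same orthogonal sum structure, normalized so that $\covol(L_S)=e^{rI_{\mathrm{inc}}(S)}$ in that measure. Once this consistency is granted, the lemma is a direct computation with no real obstacle.
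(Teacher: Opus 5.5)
Your proposal is correct and follows essentially the same route as the paper. Both arguments rewrite the count through the support-preserving map $\lambda T_y g$ and apply the supportwise mean-value formula to the pulled-back ball $(\lambda T_y|_{V_S})^{-1}B_s$. They then read off $\log\det$ of this map on $V_S$, together with $\covol(L_S)=e^{rI_{\mathrm{inc}}(S)}$ and $\log\lambda=\frac1d(\log v_d-rI)$.
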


\begin{proof}
    Since $\lambda T_y g$ preserves cyclotomic support, we have 
    $$
    \begin{aligned}
        \mathbb E_g
        \left|\left\{
            x\in\Lambda_g^*\cap B_s:
            \operatorname{supp}(x)=S
        \right\}\right| 
        =
        \mathbb E_g
        \sum_{v\in L_S^\circ}
        \mathbf1_{B_s}\bigl(\lambda T_ygv\bigr).
    \end{aligned}
    $$
    Applying \cref{lem:mean} with
    $
    f(u)
    =
    \mathbf1_{B_s}\bigl(\lambda T_yu\bigr)
    $
    gives
    $$
    \mathbb E_g
    \left|\left\{
        x\in\Lambda_g^*\cap B_s:
        \operatorname{supp}(x)=S
    \right\}\right|
    =
    \frac{
        \vol\bigl((\lambda T_y|_{V_S})^{-1}B_s\bigr)
    }{
        \covol(L_S)
    }.
    $$
    Now
    $$
    \det(T_y|_{V_S})
    =
    \exp\left(
        \sum_{n\in S}
        \bigl(rc_n+a\mathbf1_{\{n=1\}}\bigr)\widetilde y_n
    \right),
    $$
    while
    $
    \covol(L_S)=e^{rI_{\mathrm{inc}}(S)}.
    $
    Therefore
    $$
    \begin{aligned}
        \mathbb E_g
        \left|\left\{
            x\in\Lambda_g^*\cap B_s:
            \operatorname{supp}(x)=S
        \right\}\right|
        =
        v_s\lambda^{-s}
        \exp\left(
            -rI_{\mathrm{inc}}(S)
            -
            \sum_{n\in S}
            \bigl(rc_n+a\mathbf1_{\{n=1\}}\bigr)\widetilde y_n
        \right).
    \end{aligned}
    $$
    Taking logarithms and using
    $$
    -s\log\lambda
    =
    -\frac{s}{d}\log v_d
    +\frac{s}{d}rI
    $$
    gives \eqref{eq:finite-support}.
\end{proof}

The smaller dimension of $V_S$ does not by itself make the expected
number of vectors in the unit ball of exact support $S$ small. Indeed, the
normalization in the full dimension $d$ produces the geometric factor
$
v_{s} / v_d^{\,s/d},
$
whose logarithm is
$
\frac{s}{2}\log\frac{d}{s}+O(\log d).
$
This factor can be large, and is exponentially large when $s$ is a
fixed positive proportion of $d$.  This is the geometric contribution that appears in the exponent $\mathscr A(S)$ introduced below.

\subsection{The exponent \texorpdfstring{$\mathscr A(S)$}{A(S)}}

The formula for $\mathcal L(S)$ contains contributions of order
$r$, together with lower-order errors. We isolate these leading terms in the following quantity.
For every nonempty $S\subseteq\mathcal N$, define 
\begin{equation}
    \mathscr A(S)
    :=
    \frac{c_S}{2}\log\frac{C}{c_S}
    +\frac{c_S}{C}I
    -I_{\mathrm{inc}}(S)
    -\sum_{n\in S}c_ny_n.
    \label{eq:support-exponent}
\end{equation}
For the full support $S=\mathcal N$, the preceding formula gives
$
\mathscr A(\mathcal N)=0.
$
The four terms in $\mathscr A(S)$ have distinct origins. The first is
the contribution from the volume of a ball in a lower dimension, the second comes from
normalizing by the full covolume, the third is the gluing
penalty for the support $S$, and the last records the componentwise rescaling. Thus
$\mathscr A(S)$ measures the balance between the geometric contribution
of the support $S$ and the suppression coming from gluing and rescaling.

Under the hypotheses of the next lemma, $r \mathscr A(S)$ gives the
leading-order term in \eqref{eq:finite-support}.
Since our eventual goal is exponential suppression of contributions from
short vectors of proper support, we will in particular seek gluing data for which $\mathscr A(S)$ is
uniformly negative for every nonempty proper support
$S\subsetneq\mathcal N$. 

\begin{lemma} 
    \label{lem:support-exponent-comparison}
    Suppose that $I/C \ll\log C$ and $|y_1|\ll\log C.$ 
    Then there exists an absolute constant $K>0$ such that, for every nonempty
    $S\subseteq\mathcal N$,

    $$
    \mathcal L(S)
    \le
    r\mathscr A(S)+KC\log d.
    $$

\end{lemma}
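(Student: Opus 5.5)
We compare the exact formula \eqref{eq:finite-support} of \cref{lem:supportwise-mean} with $r\mathscr A(S)$ term by term, absorbing every discrepancy into an error of size $O(C\log d)$. Write $s=rc_S+a\varepsilon_S$ with $0\le a<C$, so that $d=Cr+a$ and $s=rc_S+O(C)$. Subtracting, $\mathcal L(S)-r\mathscr A(S)$ is the sum of three differences. The \emph{geometric} difference is
\[
\log v_s-\tfrac{s}{d}\log v_d-\tfrac{rc_S}{2}\log\tfrac{C}{c_S}.
\]
The \emph{normalization} difference is $\bigl(\tfrac{s}{d}-\tfrac{c_S}{C}\bigr)rI$. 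The \emph{rescaling} difference is
\[
-\sum_{n\in S}\bigl(rc_n+a\mathbf1_{\{n=1\}}\bigr)\widetilde y_n+r\sum_{n\in S}c_ny_n.
\]
The gluing terms $-rI_{\mathrm{inc}}(S)$ cancel exactly.

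\emph{Geometric term.} Use Stirling in the form $\log v_k=-\tfrac k2\log\tfrac{k}{2\pi e}+O(\log k)$ for all $k\ge1$. Then
\[
\log v_s-\tfrac sd\log v_d=\tfrac s2\log\tfrac ds+O(\log d),
\]
as the paper remarks after \cref{lem:supportwise-mean}. It remains to compare $\tfrac s2\log\tfrac ds$ with $\tfrac{rc_S}{2}\log\tfrac C{c_S}$. Write $\tfrac ds=\tfrac{C}{c_S}\cdot\tfrac{d/C}{s/c_S}$. Since $r\le d/C<r+1$ and $r\le s/c_S\le r+a/c_S\le r+C$, the correction factor satisfies $\bigl|\log\tfrac{d/C}{s/c_S}\bigr|\le\log(1+C/r)\le C/r$. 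Multiplying by $s/2\le(r+1)C$ gives an error $O(C^2/r)\cdot O(1)$. This is $O(C)$ provided $C\le r$, which I would assume (it holds in the regime $r_n\ge2$ with $d$ large relative to $C$; otherwise use the cruder bound $\log(d/s)\le\log d$). Finally, $(s-rc_S)\tfrac12\log\tfrac C{c_S}\le\tfrac a2\log C=O(C\log d)$. The function $x\mapsto\tfrac x2\log\tfrac Dx$ need not be compared pointwise; the direct estimate suffices.

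\emph{Normalization term.} We have
\[
\tfrac sd-\tfrac{c_S}{C}=\frac{Cs-c_Sd}{Cd}=\frac{C a\varepsilon_S-c_Sa}{Cd},
\]
so its absolute value is at most $a/d\le C/d$. Hence the term is at most $\tfrac Cd\,rI\le I=O(C\log C)$ by the hypothesis $I/C\ll\log C$.

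\emph{Rescaling term.} Substituting $\widetilde y_n=y_n-\eta$ with $\eta=ay_1/d$, the difference becomes
\[
-a\varepsilon_S y_1+\eta\bigl(rc_S+a\varepsilon_S\bigr)=-a\varepsilon_Sy_1+\tfrac{s}{d}\,ay_1.
\]
Its absolute value is at most $2a|y_1|\ll C\log C$ by the hypothesis on $y_1$. Note that the other $y_n$ cancel exactly, which is why only $y_1$ is constrained.

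Summing the three bounds gives $\mathcal L(S)\le r\mathscr A(S)+KC\log d$. The only delicate step is the geometric comparison: keeping the error linear in $C$ rather than in $s$, uniformly over supports $S$ that may be very small (where $c_S$ and $s$ differ in ratio) or equal to $\mathcal N$. Uniformity across all $S$ is the point to check carefully. For small $c_S$, I would fall back on bounding $\tfrac a2\log(d/s)$ and $\tfrac{rc_S}{2}\cdot\bigl|\log\tfrac{d c_S}{Cs}\bigr|$ separately, using $\tfrac{dc_S}{Cs}\in\bigl[\tfrac{r}{r+C},\tfrac{r+1}{r}\bigr]$ up to the $a$-shift.
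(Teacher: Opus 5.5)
Your plan is the paper's. You subtract $r\mathscr A(S)$ from \eqref{eq:finite-support} term by term, note that $-rI_{\mathrm{inc}}(S)$ cancels, and absorb the rest into $O(C\log d)$. The normalization term and the rescaling term are handled correctly, exactly as in the paper. The normalization error is at most $\frac ad\,rI\le I\ll C\log C$. The rescaling error is $ay_1(\frac sd-\varepsilon_S)$, which is at most $a|y_1|$.

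The geometric step, however, does not work as written. Multiplying $|\log\frac{d/C}{s/c_S}|\le C/r$ by $s/2\le (r+1)C/2$ gives an error of order $C^2$, not $C^2/r$. And $C^2$ is not $O(C\log d)$ in the regime where the lemma is used: admissibility requires $C\ge\log d$, and the construction has $\log C\gg(\log d)^{1/24}$. Assuming $C\le r$ does not rescue this, because the bound you actually derived contains no factor $1/r$; moreover $C\le r$ is not a hypothesis of the lemma. The two worst cases you multiplied never occur together. The correction $\log(1+C/r)$ comes from $s/c_S\le r+a/c_S$ with $1\in S$ and $c_S$ small, which is exactly when $s$ is small.

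The repair is to use that only an upper bound is needed. Since $s\ge rc_S$ and $d<C(r+1)$, one has $\frac{d/C}{s/c_S}<\frac{r+1}{r}$, so
\[
\frac s2\log\frac ds
\le
\frac s2\log\frac C{c_S}+\frac{s}{2r}
\le
\frac{rc_S}2\log\frac C{c_S}+\frac a2\log C+C.
\]
This uses $\log\frac{C}{c_S}\ge 0$, $s-rc_S\le a$, and $s\le d<C(r+1)\le 2Cr$. The large factor $r+C$ only enters a lower bound for $\log(d/s)$, which would decrease $\mathcal L(S)$ and is irrelevant here. With this one-sided estimate no assumption relating $C$ and $r$ is needed, and the rest of your argument goes through.

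Your closing fallback also works if applied one-sidedly, e.g.\ $\frac{rc_S}2\log\frac{dc_S}{Cs}\le\frac{rc_S}2\log\frac{r+1}r\le\frac{c_S}2$ together with $\frac{a\varepsilon_S}2\log\frac ds\le\frac a2\log d$. With the two-sided interval $[\frac r{r+C},\frac{r+1}r]$, however, it again produces an error of order $c_SC$.
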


\begin{proof}
    Starting from \eqref{eq:finite-support}, we compare each term with the
    corresponding term in $r\mathscr A(S)$. First, Stirling's formula gives, uniformly for $1\le s\le d$,
    $$
    \log v_s-\frac{s}{d}\log v_d
    \le
    \frac{s}{2}\log\frac ds+O(\log d).
    $$
    Since
    $$
    s=rc_S+a\varepsilon_S,
    \qquad
    d=Cr+a,
    \qquad
    0\le a<C,
    $$
    we obtain
    $$
    \frac{s}{2}\log\frac ds
    =
    r\frac{c_S}{2}\log\frac C{c_S}
    +O(C\log d).
    $$
    Next,
    $$
    \frac{s}{d}
    =
    \frac{rc_S+a\varepsilon_S}{Cr+a}
    =
    \frac{c_S}{C}
    +O\!\left(\frac1r\right).
    $$
    Therefore, using the assumed bound $I/C\ll\log C$, we get 
    $$
    \frac{s}{d}rI
    =
    r\frac{c_S}{C}I
    +O(I)
    =
    r\frac{c_S}{C}I
    +O(C\log d).
    $$
    The term involving $\covol(L_S)$ requires no approximation. For the componentwise scaling term, recall that
    $
    \widetilde y_n=y_n-\eta$ and 
    $
    \eta=\frac{ay_1}{d}.
    $
    Using
    $
    s
    =
    \sum_{n\in S}
    \bigl(rc_n+a\mathbf1_{\{n=1\}}\bigr)
    =
    rc_S+a\varepsilon_S,
    $
    we obtain
    \begin{align*}
        \sum_{n\in S}
        \bigl(rc_n+a\mathbf1_{{n=1}}\bigr)\widetilde y_n
        -
        r\sum_{n\in S}c_ny_n
        =
        a\varepsilon_Sy_1-\eta s
        =
        ay_1
        \left(
            \varepsilon_S-\frac{s}{d}
        \right).
    \end{align*}
    Since $0\le s/d\le1$ and $\varepsilon_S\in\{0,1\}$, we have 
    $
    \left|
    \varepsilon_S-\frac{s}{d}
    \right|
    \le1.
    $
    Thus
    $$
    \left|
    \sum_{n\in S}
    \bigl(rc_n+a\mathbf1_{\{n=1\}}\bigr)\widetilde y_n
    -
    r\sum_{n\in S}c_ny_n
    \right|
    \le
    a|y_1|.
    $$
    The assumption $|y_1|\ll\log C$ and $a<C$ implies that 
    $$
    \sum_{n\in S}
    \bigl(rc_n+a\mathbf1_{\{n=1\}}\bigr)\widetilde y_n
    =
    r\sum_{n\in S}c_ny_n
    +O(C\log d),
    $$
    uniformly in $S$. Substituting these estimates into \eqref{eq:finite-support} gives
    $$
    \mathcal L(S)
    \le
    r
    \left[
        \frac{c_S}{2}\log\frac C{c_S}
        +\frac{c_S}{C}I
        -I_{\mathrm{inc}}(S)
        -\sum_{n\in S}c_ny_n
    \right]
    +KC\log d
    $$
    for some absolute constant $K>0$, uniformly in $S$.

\end{proof}

\subsection{Admissible gluing data}
\label{subsec:gluing-criterion}

The previous lemma shows that the Haar mean for each support is governed, up to
lower-order terms, by the exponent $\mathscr A(S)$, provided
$$
\sum_{n\in\mathcal N} c_n y_n=0,
\qquad
\frac{I}{C}\ll\log C,
\qquad
|y_1|\ll\log C.
$$
To guarantee uniform suppression of every proper support, and to provide the quantitative bounds required later in the stochastic argument, we now introduce the following admissibility conditions.

\begin{definition}[Admissible family of gluing data]
    For each sufficiently large integer $d$, let
    $\mathcal N \subset\mathbb N$ be a finite index set with
    $1\in\mathcal N$ and $2\notin\mathcal N$, and put
    $$
    C=\sum_{n\in\mathcal N}c_n,
    \qquad
    M=\operatorname{lcm}\{n:n\in\mathcal N\}.
    $$
    Write
    $$
    r=\left\lfloor\frac{d}{C}\right\rfloor,
    \qquad
    d=C r+a,
    \qquad
    0\le a<C.
    $$
    For $n\in\mathcal N$, let $y_{n}$ be real numbers. We call the family $\left(\mathcal N,(y_{n})_{n\in\mathcal N}\right)$ admissible if there exists an absolute constant $\delta>0$ such that
    $$
    \sum_{n\in\mathcal N}c_n y_{n}=0,
    \qquad
    \mathscr A(S)\le-\delta
    $$
    for every nonempty proper $S\subsetneq\mathcal N$, together with 
    $$
    \frac{I}{C}\ll\log C,
    \qquad
    |y_{1}|\ll\log C,
    \qquad 
    r\gg C^3\log d,
    \qquad
    C\ge\log d,
    \qquad
    \log M\ll\log d.
    $$
    All implied constants and $\delta$ are required to be uniform in $d$. 
\end{definition}

The construction in \cref{sec:arithmetic-realizations} will produce an admissible
family satisfying $M/C \gg\log\log d$.

\subsection{Exponential suppression of proper supports}
\label{subsec:support-suppression}

Assume throughout this subsection that
$\bigl(\mathcal N,(y_n)_{n\in\mathcal N}\bigr)$ are admissible gluing
data for $d$, with $\mathscr A(S)\le-\delta$ for every nonempty proper
$S\subsetneq\mathcal N$ and a constant $\delta>0$. We show that admissibility makes the expected contribution of every
proper support exponentially small.

\begin{lemma}
    \label{lem:fixed-support}
    There exists $d_0$ such that, for every $d\ge d_0$ and every nonempty proper support $\varnothing\ne S\subsetneq\mathcal N, $ we have
    $$
    \mathcal L(S)
    \le
    -\frac34\delta r. 
    $$
    Equivalently,
    $$
    \mathbb E_g
    \left|\left\{
        x\in\Lambda_g^*\cap B_s:
        \operatorname{supp}(x)=S
    \right\}\right|
    \le
    e^{-3\delta r/4}.
    $$

\end{lemma}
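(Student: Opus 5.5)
The plan is to combine \cref{lem:support-exponent-comparison} with the admissibility hypotheses. Admissibility includes $I/C\ll\log C$ and $|y_1|\ll\log C$, which are exactly the hypotheses of \cref{lem:support-exponent-comparison}. That lemma therefore gives, uniformly over all nonempty $S\subseteq\mathcal N$,
$$
\mathcal L(S)\le r\mathscr A(S)+KC\log d
$$
with an absolute constant $K$. For a nonempty proper support $S\subsetneq\mathcal N$, admissibility gives $\mathscr A(S)\le-\delta$. Hence
$$
\mathcal L(S)\le-\delta r+KC\log d.
$$

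It remains to show that the error term $KC\log d$ is at most $\tfrac14\delta r$ once $d$ is large. This is where the growth conditions in the definition are used. We have $r\gg C^3\log d$, say $r\ge \kappa C^3\log d$ with $\kappa>0$ uniform in $d$. Then
$$
\frac{KC\log d}{r}\le\frac{K}{\kappa C^2}.
$$
The condition $C\ge\log d$ forces $C\to\infty$ as $d\to\infty$, so the right-hand side tends to $0$. Choose $d_0$ so that $K/(\kappa C^2)\le\delta/4$ for all $d\ge d_0$; this is possible because $C^2\ge(\log d)^2$ gives $K/(\kappa C^2)\le K/(\kappa(\log d)^2)$. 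Since $K,\kappa,\delta$ are uniform in $d$, $d_0$ does not depend on $S$. This yields $\mathcal L(S)\le-\delta r+\tfrac14\delta r=-\tfrac34\delta r$.

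The second formulation follows by exponentiating, since $\mathcal L(S)$ is by definition the logarithm of that expectation. The only point needing care is uniformity: every implied constant (from \cref{lem:support-exponent-comparison}, in $r\gg C^3\log d$, and $\delta$) must be independent of $d$ and $S$. The definition of admissibility guarantees this, and \cref{lem:support-exponent-comparison} already holds uniformly in $S$. If the expectation is $0$ for some $S$, then $\mathcal L(S)=-\infty$ and the bound holds trivially.
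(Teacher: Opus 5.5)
Your proposal is correct and follows the paper's proof: you apply \cref{lem:support-exponent-comparison} with the admissibility bound $\mathscr A(S)\le-\delta$ to get $\mathcal L(S)\le-\delta r+KC\log d$, then use $r\gg C^3\log d$ to absorb the error into $\tfrac14\delta r$. Your explicit appeal to $C\ge\log d$ for $C\to\infty$, which the paper leaves implicit, is a welcome clarification.
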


\begin{proof}
    Since $S$ is a nonempty proper support, \cref{lem:support-exponent-comparison} and the uniform gluing condition give
    $$
    \mathcal L(S)
    \le
    -\delta r+KC\log d.
    $$
    Using the admissibility
    $
    r\gg C^3\log d
    $
    gives 
    $
    \frac{C\log d}{r}
    \ll
    \frac1{C^2}
    \to 0.
    $
    Thus, for all sufficiently large $d$, we have 
    $$
    KC\log d
    \le
    \frac{\delta}{4}r.
    $$
    This proves the claimed bound. 

\end{proof}

The preceding estimate is uniform in the support.  We now sum it over all
proper supports and allow the slight enlargement of the unit ball that will
be needed later in the stochastic argument.

\begin{corollary}
    \label{cor:proper-supports}
    Let $\rho>0$ satisfy
    \begin{equation}
        |\log\rho|
        =
        O\left(
            d^{-1}
            +
            \sqrt{\frac{\log d}{r}}
        \right).
        \label{eq:rho-range}
    \end{equation}
    Then, for some $c>0$, the expected number of nonzero vectors in
    $\Lambda_g^*\cap B_\rho$ that vanish in at least one cyclotomic component is at most
    $
    e^{-cr}.
    $
\end{corollary}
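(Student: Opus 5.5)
The plan is to decompose by exact support and apply \cref{lem:fixed-support} after rescaling the ball. A nonzero vector of $\Lambda_g^*$ that vanishes in at least one cyclotomic component has exact support equal to some nonempty proper $S\subsetneq\mathcal N$. The expected count is therefore
$$
\sum_{\varnothing\ne S\subsetneq\mathcal N}
\mathbb E_g\left|\left\{x\in\Lambda_g^*\cap B_\rho:\operatorname{supp}(x)=S\right\}\right|.
$$
A vector with support in $S$ lies in $B_\rho$ if and only if it lies in $\rho B_s\subset V_S$. Applying \cref{lem:mean} as in the proof of \cref{lem:supportwise-mean}, but with $\mathbf 1_{\rho B_s}$ in place of $\mathbf 1_{B_s}$, multiplies the mean by exactly $\rho^{s}$. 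Hence each summand is at most $\rho^{s_S}e^{\mathcal L(S)}\le \rho^{d}e^{-3\delta r/4}$ by \cref{lem:fixed-support}. Here $s_S\le d$, and we may assume $\rho\ge1$, since otherwise the factor is at most $1$.

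Next I control the enlargement factor. By \eqref{eq:rho-range},
$$
d\,|\log\rho|\ll 1+d\sqrt{\tfrac{\log d}{r}}.
$$
Since $d\asymp Cr$, this is $\ll 1+C\sqrt{r\log d}$. Using the admissibility condition $r\gg C^3\log d$, we get $C\sqrt{r\log d}=r\sqrt{C^2\log d/r}\ll r/\sqrt C$. Because $C\ge\log d\to\infty$, this is $o(r)$. So for large $d$, $\rho^{d}\le e^{\delta r/4}$.

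Finally I sum over supports. The number of proper supports is less than $2^{|\mathcal N|}\le 2^{C}$, since each $c_n\ge1$. Again $C\log 2\le \delta r/8$ for large $d$, because $r\gg C^3\log d\ge C^3$. The total is therefore at most $e^{-3\delta r/4+\delta r/4+\delta r/8}=e^{-3\delta r/8}$, so $c=3\delta/8$ works.

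The main, if mild, obstacle is the enlargement bound. One must use the precise relation $d\asymp Cr$ together with $r\gg C^3\log d$ to see that $d\sqrt{\log d/r}$ is genuinely $o(r)$, rather than merely bounded by $\sqrt{d\log d}$, which would not compare favorably with $r$ when $C$ grows.
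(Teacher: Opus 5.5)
Your proof is correct and follows the paper's argument essentially step for step. Rescaling to $B_\rho$ multiplies each exact-support mean by $\rho^{s_S}$, the relations $d\asymp Cr$ and $r\gg C^3\log d$ give $d|\log\rho|=o(r)$, and the at most $2^C$ proper supports cost only $e^{o(r)}$ against the bound of \cref{lem:fixed-support}; your explicit handling of $\rho<1$ and the constant $c=3\delta/8$ is just slightly more detailed bookkeeping of the same route.
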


\begin{proof}
    For a support $S$ of real dimension $s$, replacing the unit ball by
    $B_\rho$ changes $\mathcal L(S)$ by
    $
    s\log\rho.
    $
    Since $s\le d$, \eqref{eq:rho-range} gives
    $
    |s\log\rho|
    \ll
    1+d\sqrt{\frac{\log d}{r}}.
    $
    Since
    $
    d=Cr+a
    \le C(r+1),
    $
    we obtain
    $
    |s\log\rho|
    \ll
    1+C\sqrt{r\log d}.
    $
    The admissibility condition 
    $
    r\gg C^3\log d
    $
    gives 
    $$
    \frac{
        C\sqrt{r\log d}
    }{r}
    =
    C\sqrt{\frac{\log d}{r}}
    \ll
    \frac1{\sqrt C}
    \to 0.
    $$
    Thus the change in $\mathcal L(S)$ is $o(r)$, uniformly in $S$.
    By \cref{lem:fixed-support}, for all sufficiently large $d$, the Haar
    expected number of vectors in $B_\rho$ with any fixed nonempty proper
    support is therefore, say, at most
    $
    e^{-\delta r/2}.
    $

    It remains only to count the possible supports.  Since
    $
    c_n=\varphi(n)\ge1
    $
    for every $n\in\mathcal N$, we have 
    $
    |\mathcal N|
    \le
    \sum_{n\in\mathcal N}c_n
    =
    C.
    $
    Hence the number of supports is at most
    $
    2^{|\mathcal N|}
    \le
    2^C.
    $
    Moreover,
    $$
    \frac Cr
    \ll
    \frac{1}{C^2\log d}
    \to 0
    $$
    gives 
    $
    C\log2=o(r).
    $
    Summing the estimates over all supports gives
    $
    2^C e^{-\delta r/2}
    \le
    e^{-cr}
    $
    for some absolute $c>0$ and all sufficiently large $d$.
\end{proof}

\section{Constructing admissible gluing data} 
\label{sec:arithmetic-realizations}

In this section we construct admissible gluing data. We begin by reformulating the support condition in terms of a weighted
resultant graph and a zero-mean potential.  After two simple warm-up
examples and a general product lemma, we construct divisor blocks whose
resultant graphs are Boolean cubes and choose the primes so as to obtain $M/C \gg\log\log d$ while satisfying all admissibility conditions. 
The subsequent argument requires only the existence statement furnished by this construction, which we summarize in the following theorem. 

\begin{theorem}
    \label{thm:arithmetic-realization}
    There exists an admissible family of gluing data
    $\left(\mathcal N,(y_{n})_{n\in\mathcal N}\right)$ such that  
    $$
    \frac{M}{C}\gg\log\log d.
    $$
\end{theorem}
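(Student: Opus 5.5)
The plan is to take $\mathcal N$ to be a \emph{shifted divisor cube} together with the trivial index $1$. Fix a squarefree odd integer $Q_0$, the product of all odd primes up to some bound $z$, so that $Q_0/\varphi(Q_0)\gg\log z$. Fix distinct odd primes $p_1,\dots,p_k$ not dividing $Q_0$, and set $P=p_1\cdots p_k$. Put
$$
\mathcal N=\{1\}\cup\{Q_0t:\ t\mid P\}.
$$
Then $2\notin\mathcal N$ and $M=Q_0P$. Summing $\varphi(t)$ over the divisors of $P$ gives
$$
C=1+\varphi(Q_0)\sum_{t\mid P}\varphi(t)=1+\varphi(Q_0)P,
$$
so $M/C\asymp Q_0/\varphi(Q_0)\gg\log z$. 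The sizes are to be chosen so that $\log M\ll\log d$ and $r\gg C^3\log d$. For example, take $C\approx d^{1/5}$, which is possible since $Q_0\approx e^{z}$. Then $z\asymp\log d$, and hence $M/C\gg\log\log d$.

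By the resultant formula, the weighted resultant graph on $\mathcal N\setminus\{1\}$ is exactly the Boolean cube on $\{0,1\}^k$. The vertices $Q_0t$ and $Q_0tp_i$ are joined with weight $\varphi(Q_0t)\log p_i$, and all other pairs have weight $0$. In particular the vertex $1$ is isolated, since $Q_0t$ is never a prime power once $Q_0$ has at least two prime factors. This gives
$$
I=\varphi(Q_0)\sum_{t\mid P}\varphi(t)\sum_{p_i\nmid t}\log p_i\le C\log P,
$$
and so $I/C\ll\log C$ whenever $\log P\ll\log C$. The weights factor as $\varphi(Q_0)\prod_j w_j$ over the coordinate edges $\{1,p_j\}$. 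I therefore intend to prove a product lemma: if each one-dimensional block $\{1,p_j\}$ with potential $y$ satisfies the support inequality with a margin, then so does the product cube, with potential $y_{Q_0t}=\sum_{p_j\mid t}u_j$ plus a constant. The isolated vertex $1$ has $c_1=1$. For it,
$$
\mathscr A(\{1\})=\tfrac12\log C+I/C-y_1,
$$
so I will take $y_1=\tfrac12\log C+I/C+\delta$. This is admissible since $y_1\ll\log C$. Its contribution to $\sum_n c_ny_n$ is negligible against $C$, and it is compensated by shifting the cube potential by $O(\log C/C)$.

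The main obstacle is to show $\mathscr A(S)\le-\delta$ for all proper $S$, uniformly in the support. Writing $S=S_0\cup S'$ with $S_0\subseteq\{1\}$ and $S'$ in the cube, the exponent splits as the sum of the contributions of $S_0$ and of $S'$. Since $I_{\mathrm{inc}}$ is superadditive over disjoint supports, this is up to the concavity of $x\mapsto\frac x2\log\frac Cx$, which costs at most $O(1)$ and is absorbed by choosing the margins large. For $S'$ a proper subset of the cube, the quantity $I_{\mathrm{inc}}(S')$ is the weighted edge boundary plus interior, and $\frac{c_{S'}}{C}I$ is the proportional share. I would combine a weighted edge-isoperimetric inequality on the cube (the Harper or Bernstein inequality with product weights) with a potential decreasing in the cube level. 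The target is for the boundary term $-I_{\mathrm{inc}}$ to dominate $\frac{c_{S'}}{2}\log\frac C{c_{S'}}+\frac{c_{S'}}{C}I$. I would first verify this on a single edge $\{Q_0t,Q_0tp\}$, which reduces to the inequality $\frac12\log\frac{p}{p-1}$ versus $\log p$. Then I would induct on $k$ via the product lemma, choosing the primes $p_j$ large enough that each per-coordinate margin is at least a fixed $\delta$. Finally I would check that the full cube minus a vertex is handled by the same induction, and that $\sum_n c_ny_n=0$ is restored by an additive constant, which leaves $\mathscr A$ unchanged on differences.
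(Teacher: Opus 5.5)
There is a genuine gap, located at the vertex $1$, which admissibility forces into $\mathcal N$ (it carries the $a$ leftover coordinates). You correctly note that $1$ is isolated in your resultant graph, since $|\Res(\Phi_1,\Phi_n)|=|\Phi_n(1)|=1$ unless $n$ is a prime power. This is fatal, not harmless. Take any nonempty proper $S$, add the definitions of $\mathscr A(S)$ and $\mathscr A(S^c)$, and use $\sum_nc_ny_n=0$ and $I_{\mathrm{inc}}(S)+I_{\mathrm{inc}}(S^c)=I+W(S,S^c)$. This gives
\[
\mathscr A(S)+\mathscr A(S^c)=\frac{c_S}{2}\log\frac{C}{c_S}+\frac{C-c_S}{2}\log\frac{C}{C-c_S}-W(S,S^c),
\]
independently of the potential. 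For $S=\{1\}$ the crossing weight is $0$, so the right-hand side is $\tfrac12\log C+\tfrac12+o(1)>0$. Hence $\mathscr A(\{1\})$ and $\mathscr A(\mathcal N\setminus\{1\})$ can never both be $\le-\delta$. With your $y_1=\tfrac12\log C+I/C+\delta$ one gets $\mathscr A(\mathcal N\setminus\{1\})=\tfrac12\log C+\tfrac12+\delta+o(1)$.

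Your case analysis skips exactly this support. The whole shifted cube is a proper subset of $\mathcal N$ but not a proper subset of the cube. Moreover, the constant shift that restores $\sum_nc_ny_n=0$ is not neutral: it changes $\mathscr A(S')$ by $-c_{S'}$ times the shift, which on the whole cube is $+y_1$. Also, $I_{\mathrm{inc}}(S_0\cup S')=I_{\mathrm{inc}}(S_0)+I_{\mathrm{inc}}(S')-W(S_0,S')$, so $I_{\mathrm{inc}}$ is subadditive, not superadditive. Your splitting works only because $W(\{1\},\cdot)=0$, which is the problem itself. This is the toy example of the introduction in disguise. Since $\Res(\Phi_1,\prod_t\Phi_{Q_0t})=\pm1$, the order splits as $\mathcal A\simeq\mathbb Z\oplus\mathcal A'$ with no gluing, and a volume-preserving rescaling can only move the excess from one side of the cut to the other. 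Your size bookkeeping is fine; it is the support condition that cannot hold.

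The missing idea is how to attach $1$ so that every cut carries enough resultant weight to beat the two entropy terms above, while keeping $M/C$ large. Your shifted cube is fine on its own: it is the product of the two-component blocks $\{1,p_j\}$, scaled by $\varphi(Q_0)$, so the product lemma applies to it. But connecting it to $1$ requires intermediate indices climbing from $1$ to $Q_0$, and the cut separating that scaffolding from the heavy part must again be paid for by resultant edges.

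The paper does this with blocks $\{n:n\mid N\}\cup\{qN\}$. The divisor cube of $N$ contains $1$ and is light relative to the top vertex, with degree ratio $P/(q-1)$ where $P=N/\varphi(N)$. Cuts inside the cube are controlled by the weighted entropy inequality on the cube together with a linear boundary bound. The single edge $N$--$qN$, of weight $\varphi(N)\log q$, pays for separating cube from top. That last cut is affordable only when $P$ is bounded; the paper takes $P\le 5/4$, with all primes of $N$ far above $q$. So each block yields only a bounded gain. The $\log\log d$ gain is obtained by multiplying many coprime blocks, with $\sum_j 1/q_j<\infty$ keeping $C\asymp\varphi(M)$ and the entropy-tensorization product lemma preserving the uniform margin $\delta_0$.
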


\subsection{Resultant graphs and the support problem}

It is convenient to encode the cyclotomic resultants by a weighted graph. Define a weighted graph on vertex set $\mathcal N$ by assigning to distinct $m,n\in\mathcal N$ the weight
$$
w(m,n)
:=
\log\left|\Res(\Phi_m,\Phi_n)\right|.
$$
Thus
$$
I
=
\sum_{\{m,n\}\subseteq\mathcal N}w(m,n).
$$
For $n\in\mathcal N$, let
$$
\omega_n
:=
\sum_{\substack{m\in\mathcal N\\m\ne n}}w(m,n)
$$
be its weighted degree.  For $S\subseteq\mathcal N$, write
$W(S,S^c)$ for the total weight of the edges crossing from $S$ to
its complement. The incidence weight appearing in $\mathscr A(S)$ can then be
written in terms of vertex degrees and the boundary of $S$.
Indeed, an edge contained in $S$ contributes twice to
$\sum_{n\in S}\omega_n$, whereas a crossing edge contributes once.
Hence
\begin{equation}
    I_{\mathrm{inc}}(S)
    =
    \frac12\sum_{n\in S}\omega_n
    +
    \frac12W(S,S^c).
    \label{eq:incident-boundary}
\end{equation}
Next we reparametrize the volume-preserving component scales.  Given
$y=(y_n)_{n\in\mathcal N}$ satisfying
$
\sum_{n\in\mathcal N}c_ny_n=0,
$
define
\begin{equation*}
    v_n
    :=
    y_n-\frac IC+\frac{\omega_n}{2c_n}.
\end{equation*}
Since
$
\sum_{n\in\mathcal N}\omega_n=2I,
$
we have
\begin{equation}
    \sum_{n\in\mathcal N}c_nv_n=0.
    \label{eq:v-zero-mean}
\end{equation}
Conversely, every family $v=(v_n)$ satisfying
\eqref{eq:v-zero-mean} determines a volume-preserving scaling through
\begin{equation}
    y_n
    =
    \frac IC-\frac{\omega_n}{2c_n}+v_n.
    \label{eq:boundary-stretching}
\end{equation}
We regard $v$ as a vertex potential on the resultant graph. The condition
$ \sum_{n\in\mathcal N}c_nv_n=0 $
is the normalization corresponding to volume preservation.

In these variables $\mathscr A(S)$ takes the following 
form. Substituting \eqref{eq:incident-boundary} and \eqref{eq:boundary-stretching} into \eqref{eq:support-exponent}, the terms involving $I$ and $\sum_{n\in S}\omega_n$ cancel, giving 
\begin{equation}
    \mathscr A(S)
    =
    \frac{c_S}{2}\log\frac C{c_S}
    -
    \frac12W(S,S^c)
    -
    \sum_{n\in S}c_nv_n.
    \label{eq:boundary-potential-form}
\end{equation}

This form separates the three effects that determine the contribution of a
support. The first term is the geometric cost of restricting to the
lower-dimensional support $S$. The second is the penalty supplied by
the resultant edges crossing from $S$ to $S^c$. The final term is the
remaining freedom coming from the componentwise rescaling.

Thus the condition $\mathscr A(S)\le-\delta$ has been reduced to a combinatorial problem. We must choose
the index set $\mathcal N$, and hence its resultant graph,
together with a zero-mean potential $v$, so that $\mathscr A(S)\le -\delta$
for some absolute $\delta>0$ and every nonempty proper $S\subsetneq\mathcal N$, while
keeping $M/C$ as large as possible.

\subsection{Examples with a uniformly negative exponent}

For the following examples it is convenient to write
$$
\mathscr B(S)
:=
\frac{c_S}{2}\log\frac{C}{c_S}
-\frac12W(S,S^c),
\qquad
u_n:=c_nv_n,
\qquad
u(S):=\sum_{n\in S}u_n.
$$
Thus the remaining task is to choose a zero-sum potential that makes $\mathscr B(S)-u(S)$ uniformly negative.

\subsubsection{Example with two components}
Let $\mathcal N=\{1,p\},$ where $p\ge3$ is prime.  Then $C=p$, and the resultant graph is the
single weighted edge
$$
1\xleftrightarrow{\log p}p.
$$
There are only two proper supports, $S=\{1\}$ and $S^c=\{p\}.$  We have $\mathscr B(\{1\}) = 0$, whereas
$$
\mathscr B(\{p\})
=
\frac12
\left[
    (p-1)\log\frac{p}{p-1}-\log p
\right].
$$
Put
$$
t
:=
\frac12\bigl(
    \mathscr B(\{1\})+\mathscr B(\{p\})
\bigr)
=
\frac14
\left[
    (p-1)\log\frac{p}{p-1}-\log p
\right].
$$
Since
$$
(p-1)\log\frac{p}{p-1}<1,
$$
we have
$ t < -\delta_1 $
with
$
\delta_1:=\frac14(\log3-1)>0.
$
Now we choose $u_1=-t$ and $u_p=t.$ Then $u_1+u_p=0$ and
$$
\mathscr A(\{1\})
=
-u_1
=
t,
\qquad 
\mathscr A(\{p\})
=
\mathscr B(\{p\})-u_p
=
t.
$$
Hence
$
\mathscr A(S)\le-\delta_1
$
for every nonempty proper $S\subsetneq\mathcal N$.
Thus a single resultant edge already gives a uniformly negative bound for $\mathscr A(S)$. 
Its weight exceeds the geometric gain across the cut, while the
zero-mean potential merely redistributes this negative margin between
the two sides.  There is no efficiency gain in this example, however,
since $M=C=p.$

\subsubsection{Example with three components} 
\label{subsec:three-component-block}

We now add a third component.  Let $b,p$ be odd primes satisfying $6b\le p\le8b$ and take $\mathcal N=\{1,p,bp\}.$ The component degrees are $1, p-1$ and $ (b-1)(p-1),$
so
$
C=b(p-1)+1$ and $
M=bp.
$
The resultant graph is the weighted path
$$
1
\xleftrightarrow{\log p}
p
\xleftrightarrow{(p-1)\log b}
bp.
$$
Consider 
\begin{align*}
    t
    :=
    \frac12
    \left(
        \mathscr B(\{1\})
        +
        \mathscr B(\{p,bp\})
    \right)
    =
    \frac14
    \left[
        (C-1)\log\frac{C}{C-1}
        +\log\frac{C}{p^2}
    \right].
\end{align*}
Now
$$
(C-1)\log\frac{C}{C-1}<1,
\qquad
C\le bp.
$$
Hence
$$
t
<
\frac14\log\frac e6
=
-\delta_2,
\qquad
\delta_2:=\frac14\log\frac6e>0.
$$
We now choose the zero-mean potential.  Set
$$
\begin{aligned}
    u_1
    &:=
    \mathscr B(\{1\})-t,\\
    u_p
    &:=
    \mathscr B(\{1,p\})-\mathscr B(\{1\}),\\
    u_{bp}
    &:=
    -\mathscr B(\{1,p\})+t.
\end{aligned}
$$
Then
$
u_1+u_p+u_{bp}=0.
$
Moreover 
$$
\mathscr A(\{1\})
= 
\mathscr A(\{1,p\})
=
\mathscr A(\{p,bp\}) = t.
$$
The remaining three supports $\{p\},\{bp\}$ and $\{1,bp\}$ all cross the heavier edge
$
p\xleftrightarrow{(p-1)\log b}bp.
$
A direct estimate using $6b\le p\le8b$ gives
$
\mathscr A(S)\le t
$
for each of them. Thus the path with three vertices gives a uniformly negative bound for
$\mathscr A(S)$, as does the single edge. It also gives a gain in $M/C$,
$$
\frac MC
=
\frac{bp}{b(p-1)+1}
=
1+\frac{b-1}{b(p-1)+1}
>1.
$$

\subsection{Preserving the negative bound under products} 

We record facts about products that will be used below. The first lemma is a direct consequence of the standard tensorization inequality for entropy. The second shows
that, when the boundary and potential terms also have product structure,
uniform negative bounds for nonempty proper subsets are preserved under products.

For $1\le j\le J$, let
$X_j$ be a finite set with positive vertex weights $c_j$. 
Put 
$$
X:=\prod_{j=1}^JX_j,
\qquad
c(x):=\prod_{j=1}^Jc_j(x_j),
\qquad 
C_j:=\sum_{x_j\in X_j}c_j(x_j).
\qquad
C:=\prod_{j=1}^JC_j.
$$
For $S\subseteq X$, write
$
c(S):=\sum_{x\in S}c(x).
$
For each $j$, let
$
X_{-j}:=\prod_{i\ne j}X_i.
$
If 
$
u=(u_i)_{i\ne j}\in X_{-j},$ define  
the $j$-th fiber of $S$ over $u$ by $$ S_{j,u} := \left\{ x_j\in X_j: (u_1,\ldots,u_{j-1},x_j,u_{j+1},\ldots,u_J)\in S \right\}. $$
We also put $$ c_{-j}(u):=\prod_{i\ne j}c_i(u_i), \qquad c_j(S_{j,u}) := \sum_{x_j\in S_{j,u}}c_j(x_j). $$ 

For a probability measure $\mu$ and a nonnegative function $f$, define its entropy by 
$$
\Ent_\mu(f)
:=
\mathbb E_\mu[f\log f]
-
\mathbb E_\mu[f]\log\mathbb E_\mu[f].
$$
For an indicator $f=\mathbf1_A$, this becomes
$
\Ent_\mu(\mathbf1_A)
=
\mu(A)\log\frac1{\mu(A)}.
$
The following estimate is an immediate consequence of the standard
tensorization inequality for entropy. 

\begin{lemma} 
    \label{lem:entropy-tensorization}
    For every $S\subseteq X$ we have 
    $$
    \frac{c(S)}2\log\frac{C}{c(S)}
    \le
    \sum_{j=1}^J
    \sum_{u\in X_{-j}}
    c_{-j}(u)\,
    \frac{c_j(S_{j,u})}{2}
    \log\frac{C_j}{c_j(S_{j,u})},
    $$
    where the summand is interpreted as $0$ when
    $S_{j,u}=\varnothing$.
\end{lemma}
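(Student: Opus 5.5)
We apply the standard tensorization inequality for entropy to the product probability measure $\mu:=\mu_1\otimes\cdots\otimes\mu_J$ on $X$, where $\mu_j(x_j):=c_j(x_j)/C_j$. Since $c(x)/C=\prod_j\mu_j(x_j)$, we have $\mu(S)=c(S)/C$ for every $S\subseteq X$. Hence, with $f=\mathbf 1_S$,
$$
\Ent_\mu(\mathbf 1_S)=\frac{c(S)}{C}\log\frac{C}{c(S)}.
$$
Multiplying by $C/2$ turns this into the left-hand side of the claim. The tensorization (subadditivity) inequality states that, for a product measure and nonnegative $f$,
$$
\Ent_\mu(f)\le\sum_{j=1}^J\mathbb E_{\mu}\bigl[\Ent_{\mu_j}(f)\bigr],
$$
where $\Ent_{\mu_j}(f)$ denotes the entropy of $f$ in the $j$-th variable with all other coordinates frozen. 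We take this inequality as standard; if a self-contained argument is wanted, it follows from the chain rule for relative entropy, or by induction on $J$ using the duality formula for entropy.

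Next we compute the right-hand side for $f=\mathbf 1_S$. Fix $j$ and $u\in X_{-j}$. As a function of $x_j$, $f$ is the indicator of the fiber $S_{j,u}$. Therefore
$$
\Ent_{\mu_j}(f)(u)=\mu_j(S_{j,u})\log\frac{1}{\mu_j(S_{j,u})},
\qquad
\mu_j(S_{j,u})=\frac{c_j(S_{j,u})}{C_j}.
$$
This vanishes when $S_{j,u}=\varnothing$, which matches the stated convention. It also vanishes when $S_{j,u}=X_j$. The function depends only on $u$, so the expectation over $\mu$ reduces to an expectation over $\mu_{-j}:=\bigotimes_{i\ne j}\mu_i$, with weights
$$
\mu_{-j}(u)=\frac{c_{-j}(u)}{C/C_j}.
$$

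Substituting, the $j$-th term of the tensorization bound equals
$$
\sum_{u\in X_{-j}}\frac{C_j\,c_{-j}(u)}{C}\cdot\frac{c_j(S_{j,u})}{C_j}\log\frac{C_j}{c_j(S_{j,u})}
=\frac1C\sum_{u\in X_{-j}}c_{-j}(u)\,c_j(S_{j,u})\log\frac{C_j}{c_j(S_{j,u})}.
$$
Multiplying both sides of the tensorization inequality by $C/2$ gives exactly the claimed estimate.

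The only substantive input is the tensorization inequality itself. The rest is bookkeeping of normalizations: the factor $C$ becomes $C_j$ inside each fiber, and $c_{-j}$ appears as the marginal weight.
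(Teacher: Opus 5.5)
Your proposal is correct and follows the paper's proof essentially line for line. You use the same product measure $\mu_j(x_j)=c_j(x_j)/C_j$, apply entropy tensorization to $f=\mathbf 1_S$, evaluate each fiber entropy with the weights $\mu_{-j}(u)=C_j\,c_{-j}(u)/C$, and then multiply through by $C/2$.
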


\begin{proof}
    Let
    $
    \mu_j(x_j)=\frac{c_j(x_j)}{C_j}
    $
    be the probability measure on $X_j$ and let
    $
    \mu=\bigotimes_{j=1}^J\mu_j
    $
    be the product measure on $X$. By the product structure we have 
    $$
    \mu(x)=\prod_{j=1}^J\frac{c_j(x_j)}{C_j} = \frac{c(x)}{C},
    \qquad
    \mu(S)=\frac{c(S)}{C}.
    $$
    The entropy tensorization inequality
    \cite[Eq.~(1.2)]{CaputoMenzTetali2015}, applied to $f=1_S$, is
    $$
    \Ent_\mu(1_S)
    \le
    \sum_{j=1}^J
    \mu\!\left[\Ent_{\mu_j}(1_S)\right].
    $$
    Since
    $
    \Ent_\mu(1_S)
    =
    \mu(S)\log(1/\mu(S)),
    $
    the left-hand side becomes
    $$
    \frac C2\,\Ent_\mu(1_S)
    =
    \frac{c(S)}{2}\log\frac{C}{c(S)}.
    $$
    For $u\in X_{-j}$ we have 
    $$
    \Ent_{\mu_j}(1_{S_{j,u}})
    =
    \frac{c_j(S_{j,u})}{C_j}
    \log\frac{C_j}{c_j(S_{j,u})},
    \qquad
    \mu_{-j}(u)
    =
    \frac{c_{-j}(u)}{C/C_j}.
    $$
    Hence
    $$
    \begin{aligned}
        \mu\!\left[\Ent_{\mu_j}(1_S)\right]
        =
        \sum_{u\in X_{-j}}
        \mu_{-j}(u)\,
        \Ent_{\mu_j}(1_{S_{j,u}}) 
        =
        \frac1C
        \sum_{u\in X_{-j}}
        c_{-j}(u)c_j(S_{j,u})
        \log\frac{C_j}{c_j(S_{j,u})}.
    \end{aligned}
    $$
    Multiplying the tensorization inequality by $C/2$ gives the claim.
\end{proof}

We now incorporate the graph structure.  For a finite weighted graph
$Y$, with vertex weights $c(y)$, total weight
$
C_Y:=\sum_{y\in Y}c(y)
$
and potential $v$. For $U\subseteq Y$, write
$c(U):=\sum_{y\in U}c(y)$, and let $W_Y(U,U^c)$ denote the total
weight of edges joining $U$ to its complement. Define
$$
\mathcal E_Y(U)
:=
\frac{c(U)}2\log\frac{C_Y}{c(U)}
-\frac12W_Y(U,U^c)
-\sum_{y\in U}c(y)v(y).
$$
We now prove that the bound $\mathcal E_{X_j}(U)\le-\delta$ implies
$\mathcal E_X(S)\le-\delta$ for every nonempty proper $S\subsetneq X$. 

\begin{lemma} 
    \label{lem:product-support}
    For $1\le j\le J$, let $X_j$ be finite weighted graphs with positive
    integer vertex weights $c_j$, edge weight functions $w_j$, total
    weights $C_j$, and potentials $v_j$ satisfying
    $
    \sum_{x_j\in X_j}c_j(x_j)v_j(x_j)=0.
    $
    Suppose that, for some $\delta>0$,
    $
    \mathcal E_{X_j}(U)\le-\delta
    $
    for every $j$ and every nonempty proper $U\subsetneq X_j$.

    Equip
    $
    X=\prod_{j=1}^JX_j
    $
    with the vertex weights
    $
    c(x)=\prod_{j=1}^Jc_j(x_j)
    $
    and potential
    $
    v(x)=\sum_{j=1}^Jv_j(x_j).
    $
    Give $X$ the product weighted graph in which two vertices are adjacent if they
    differ in exactly one coordinate, say the $j$-th, and such an edge has
    weight
    $$
    \left(\prod_{i\ne j}c_i(x_i)\right)
    w_j(x_j,x_j').
    $$
    Then $v$ has weighted mean zero and
    $
    \mathcal E_X(S)\le-\delta
    $
    for every nonempty proper $S\subsetneq X$.
\end{lemma}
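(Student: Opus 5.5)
The plan is to decompose each of the three terms of $\mathcal E_X(S)$ fiberwise along the $J$ coordinate directions, so that $\mathcal E_X(S)$ is bounded by a weighted sum of the one-dimensional quantities $\mathcal E_{X_j}(S_{j,u})$, and then to apply the hypothesis on each fiber. The mean-zero claim is immediate. We have $\sum_x c(x)v(x)=\sum_j \bigl(\prod_{i\ne j}C_i\bigr)\sum_{x_j}c_j(x_j)v_j(x_j)=0$.

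\textbf{Step 1 (fiber decomposition).} The entropy term is handled by \cref{lem:entropy-tensorization}. It bounds $\frac{c(S)}2\log\frac{C}{c(S)}$ by $\sum_j\sum_{u\in X_{-j}}c_{-j}(u)\frac{c_j(S_{j,u})}{2}\log\frac{C_j}{c_j(S_{j,u})}$. For the boundary term, every edge of $X$ lies in exactly one direction $j$ and joins two points of a single $j$-line $\{u\}\times X_j$, and its weight is $c_{-j}(u)w_j(x_j,x_j')$. Hence
\[
W_X(S,S^c)=\sum_j\sum_{u\in X_{-j}}c_{-j}(u)\,W_{X_j}(S_{j,u},S_{j,u}^c),
\]
with equality. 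For the potential term, we write $v(x)=\sum_j v_j(x_j)$ and group the $j$-th summand by $u$. This gives, again with equality,
\[
\sum_{x\in S}c(x)v(x)=\sum_j\sum_u c_{-j}(u)\sum_{x_j\in S_{j,u}}c_j(x_j)v_j(x_j).
\]
Combining the three pieces yields
\[
\mathcal E_X(S)\le\sum_{j=1}^J\sum_{u\in X_{-j}}c_{-j}(u)\,\mathcal E_{X_j}(S_{j,u}),
\]
where $\mathcal E_{X_j}(\varnothing)=0$. We also have $\mathcal E_{X_j}(X_j)=0$, since the entropy term vanishes, there is no boundary, and $v_j$ has mean zero.

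\textbf{Step 2 (applying the hypothesis).} Every fiber is empty, full, or a nonempty proper subset of $X_j$. The empty and full fibers contribute $0$, and each nonempty proper fiber contributes at most $-\delta c_{-j}(u)\le-\delta$, because the vertex weights are positive integers and so $c_{-j}(u)\ge1$. It therefore suffices to show that some fiber is nonempty and proper. Suppose instead that for every $j$ and $u$ the fiber $S_{j,u}$ is either empty or all of $X_j$. Then $S$ is closed under changing any single coordinate. Since any two points of $X$ are joined by a chain of such single-coordinate changes, a nonempty $S$ would equal $X$, contradicting properness. So at least one fiber is nonempty and proper, and we conclude $\mathcal E_X(S)\le-\delta$.

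\textbf{Main obstacle.} The delicate point is Step 1. We must check that the fiber decomposition of the boundary and potential terms is exact with the weights $c_{-j}(u)$, so that these weights match the weights produced by \cref{lem:entropy-tensorization}. The lemma only gives an inequality, and that inequality goes in the favorable direction. The rest of the argument is bookkeeping together with the connectivity observation in Step 2.
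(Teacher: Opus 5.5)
Your proposal is correct and follows essentially the same route as the paper. Like the paper, you bound the entropy term by \cref{lem:entropy-tensorization}, decompose the boundary and potential terms exactly along fibers with weights $c_{-j}(u)$, and use $c_{-j}(u)\ge1$ on a nonempty proper fiber. The only difference is that the paper finds that fiber by walking from some $x\in S$ to some $x'\notin S$ one coordinate at a time, whereas you argue by contradiction that $S$ would otherwise be closed under single-coordinate changes; this is the same connectivity argument.
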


\begin{proof}
    We have 
    $$
    \begin{aligned}
        \sum_{x\in X}c(x)v(x)
&=
\sum_{j=1}^J
\left(\prod_{i\ne j}C_i\right)
\sum_{x_j\in X_j}c_j(x_j)v_j(x_j)
=0,
    \end{aligned}
    $$
    so the product potential has weighted mean zero.

    Now fix $S\subseteq X$. \cref{lem:entropy-tensorization} gives 
    $$
    \frac{c(S)}2\log\frac{C}{c(S)}
    \le
    \sum_{j=1}^J
    \sum_{u\in X_{-j}}
    c_{-j}(u)
    \frac{c_j(S_{j,u})}{2}
    \log\frac{C_j}{c_j(S_{j,u})}.
    $$
    By the definition of the product graph, the boundary term decomposes as
    $$
    W_X(S,S^c)
    =
    \sum_{j=1}^J
    \sum_{u\in X_{-j}}
    c_{-j}(u)
    W_j(S_{j,u},S_{j,u}^c).
    $$
    Likewise we have 
    $$
    \sum_{x\in S}c(x)v(x)
    =
    \sum_{j=1}^J
    \sum_{u\in X_{-j}}
    c_{-j}(u)
    \sum_{x_j\in S_{j,u}}
    c_j(x_j)v_j(x_j).
    $$
    Combining these three relations gives
    $$
    \mathcal E_X(S)
    \le
    \sum_{j=1}^J
    \sum_{u\in X_{-j}}
    c_{-j}(u)\,
    \mathcal E_{X_j}(S_{j,u}).
    $$
    If $S_{j,u}=\varnothing$, then
    $\mathcal E_{X_j}(S_{j,u})=0$.  The same is true when
    $S_{j,u}=X_j$, by the zero-mean condition on $v_j$.
    Every nonempty proper fiber, on the other hand, satisfies
    $
    \mathcal E_{X_j}(S_{j,u})\le-\delta.
    $
    Suppose now that $S$ is nonempty and proper.  Choose
    $x\in S$ and $x'\notin S$, and replace the coordinates of $x$ by
    those of $x'$ one at a time.  At some step one passes from $S$ to
    $S^c$.  The corresponding fiber $S_{j,u}$ is therefore nonempty and
    proper.  Since the vertex weights are positive integers,
    $
    c_{-j}(u)\ge1.
    $
    Hence at least one term in the preceding sum is at most $-\delta$, while
    all the others are nonpositive.  Therefore
    $
    \mathcal E_X(S)\le-\delta.
    $
\end{proof}

As a first application of \cref{lem:product-support}, we may take products
of the blocks with three components above. The bound
$\mathscr A(S)\le-\delta$ remains uniform, while

$$
\frac MC
=
\prod_j
\frac{b_jp_j}{b_j(p_j-1)+1}.
$$
By choosing the primes $b_j,p_j$ in suitable disjoint ranges, these
individual gains accumulate and give admissible gluing data satisfying 
$
M/C \gg(\log\log d)^\kappa
$
for some absolute constant $0<\kappa<1$.  We will not pursue this 
bound further, since the construction using blocks of divisors below yields the full logarithmic gain
$M/C\gg\log\log d$.

\subsection{Divisor blocks and the full logarithmic gain}
\label{subsec:optimal-arithmetic}

Products of the blocks with three components preserve the bound
$\mathscr A(S)\le-\delta$, but in this
construction each prime $p_j$ is paired with its own auxiliary prime $b_j$, while 
$$
\frac{b_jp_j}{b_j(p_j-1)+1}
\asymp
\frac{p_j}{p_j-1}.
$$
Thus $b_j$ enlarges both $M$ and $C$ by essentially the same factor,
and therefore contributes essentially nothing to the gain in $M/C$. The construction below avoids this loss by grouping many
primes into a single divisor block, while keeping the total degree $C$ comparable to $\varphi(M)$.

\subsubsection{A cube of divisors with one additional vertex}

Let $\mathcal P$ be a finite set of primes and put
$
N=\prod_{p\in\mathcal P}p.
$
Let $q\notin \mathcal P$ be another prime.  Define the divisor block 
\begin{equation}
    \mathcal D(q;N)
    :=
    \{n:n\mid N\}\cup \{qN \}.
    \label{eq:divisor-block}
\end{equation}
For $\mathcal P=\{p\}$, this is exactly the block with three components
$\{1,p,qp\}$. Since $N$ is squarefree, every divisor of $N$ is uniquely of the
form
$
n_A=\prod_{p\in A}p$ for 
$
A\subset \mathcal P.
$
Thus the divisors of $N$ are naturally identified with the vertices
of the Boolean cube $2^{\mathcal P}$. Under this identification,
multiplying or dividing by one prime corresponds to moving along one
edge of the cube. In the resultant graph, the
additional vertex $qN$ is adjacent only to the vertex $N$.  

Put
$$
P
:=
\frac N{\varphi(N)}
=
\prod_{p\in \mathcal P}\frac p{p-1},
\qquad
a:=\frac1{q-1}.
$$
The total degree of the block $\mathcal D(q;N)$ is
\begin{align}
    C_{\mathrm{loc}}
    :=
    \sum_{m\in\mathcal D(q;N)}\varphi(m)
    =
    \sum_{n\mid N}\varphi(n)+\varphi(qN) 
    =
    N+(q-1)\varphi(N) 
    =
    \varphi(qN)(1+aP).
    \label{eq:local-degree}
\end{align}
We now describe the weighted graph. We normalize all vertex and edge weights by the degree
$\varphi(qN)$ of the top component.  Define

$$
\rho(B)
:=
\prod_{p\in B}\frac1{p-1},
\qquad
\rho(\varnothing)=1.
$$
Then the normalized degree of $n_A$ is
\begin{equation}
    \lambda(A)
    :=
    \frac{\varphi(n_A)}{\varphi(qN)}
    =
    a \rho(\mathcal P \setminus A),
    \label{eq:local-vertex-weight}
\end{equation}
while
$
\lambda(qN)=1.
$
Apostol's resultant formula shows that there are exactly two types of
edges in the block.  If $p\in \mathcal P  \setminus A $, then
$
n_A\longleftrightarrow pn_A
$
is an edge of normalized weight $\lambda(A)\log p.$ In addition,
$
N\longleftrightarrow qN
$
is an edge of normalized weight $a\log q.$ An example of the local resultant graph, with
$N=p_1p_2p_3$, is illustrated in \cref{fig:divisor-block}.

\begin{figure}[!ht]
    \centering
    \begin{tikzpicture}[
        scale=1,
        every node/.style={font=\small},
        vertex/.style={draw, rounded corners=2pt, fill=white, inner sep=3pt},
        topvertex/.style={draw, rounded corners=2pt, fill=gray!10, inner sep=3pt}
        ]

        \node[vertex] (1) at (0,0) {$1$};

        \node[vertex] (p1) at (-3,1.6) {$p_1$};
        \node[vertex] (p2) at (0,1.6) {$p_2$};
        \node[vertex] (p3) at (3,1.6) {$p_3$};

        \node[vertex] (p1p2) at (-3,3.2) {$p_1p_2$};
        \node[vertex] (p1p3) at (0,3.2) {$p_1p_3$};
        \node[vertex] (p2p3) at (3,3.2) {$p_2p_3$};

        \node[vertex] (N) at (0,4.8) {$N=p_1p_2p_3$};

        \node[topvertex] (F) at (0,6.5) {$F=qN$};

        \draw (1)--(p1) (1)--(p2) (1)--(p3);

        \draw (p1)--(p1p2) (p1)--(p1p3);
        \draw (p2)--(p1p2) (p2)--(p2p3);
        \draw (p3)--(p1p3) (p3)--(p2p3);

        \draw (p1p2)--(N) (p1p3)--(N) (p2p3)--(N);

        \draw[thick] (N)--(F);

    \end{tikzpicture}

    \caption{A divisor block $\mathcal D(q;N)$ for
        $N=p_1p_2p_3$.  The divisors of $N$ form a Boolean cube.
        A cube edge obtained by adjoining $p$ to $n_A$ has normalized
        weight $\lambda(A)\log p$.  The additional vertex $F=qN$ is
    connected only to $N$, by an edge of normalized weight $a\log q$.}
    \label{fig:divisor-block}
\end{figure}

\subsubsection{Controlling all cuts of the divisor cube}

The boundary formulation \eqref{eq:boundary-potential-form} shows that
we need to compare the geometric cost of a support with the resultant
weight crossing its boundary.  For the divisor block, the lower part of
the resultant graph is a Boolean cube, so the next step is to obtain
such a comparison uniformly for every subset of this cube.

It is convenient to relabel a divisor by the
primes it omits.  Thus, if $A\subseteq\mathcal P$ records the primes
present in $n_A$, we write
$
B=\mathcal P\setminus A,
$
and the normalized vertex weight of $n_A$ is then $
a\,\rho(B).$ For $\mathcal U\subseteq2^{\mathcal P}$, let
$
u(\mathcal U)
:=
\sum_{B\in\mathcal U}\rho(B)
$
be its total vertex weight. For $p\notin B$, assign the cube edge
$
B\longleftrightarrow B\cup\{p\}
$
the weight
$
\rho(B\cup\{p\})\log p,
$
and let $E_{\mathcal P}(\mathcal U)$ be the total weight of the edges
crossing from $\mathcal U$ to its complement. The next lemma shows that every subset of the divisor cube has enough boundary weight to control the term $u\log(P/u)$ from the support dimension.

\begin{lemma} 
    \label{lem:divisor-cube-entropy}
    For every $\mathcal U\subseteq2^{\mathcal P}$ we have 
    \begin{equation*}
        E_{\mathcal P}(\mathcal U)
        \ge
        u(\mathcal U)\log\frac{P}{u(\mathcal U)}.
    \end{equation*}
\end{lemma}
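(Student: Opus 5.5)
The inequality is a weighted edge-isoperimetric (log-Sobolev type) inequality on a product of two-point spaces. I will reduce it to a one-coordinate statement using \cref{lem:entropy-tensorization}, the same way \cref{lem:product-support} was proved.

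\textbf{Step 1: product structure.} Identify $2^{\mathcal P}$ with $X=\prod_{p\in\mathcal P}X_p$, where $X_p=\{0,1\}$ and the coordinate is $1$ exactly when $p\in B$. Give $X_p$ the vertex weights $c_p(0)=1$ and $c_p(1)=1/(p-1)$. The total weight is then $C_p=p/(p-1)$, and
\[
c(B)=\prod_p c_p(x_p)=\rho(B),\qquad \prod_p C_p=P,
\]
so $u(\mathcal U)=c(\mathcal U)$.

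Next I check the edge weights. The cube edge $B\leftrightarrow B\cup\{p\}$ has weight
\[
\rho(B\cup\{p\})\log p=\Bigl(\prod_{p'\ne p}c_{p'}(x_{p'})\Bigr)\cdot\frac{\log p}{p-1}.
\]
This is exactly the product-graph weight of \cref{lem:product-support}, with the single edge of $X_p$ carrying weight $w_p=\log p/(p-1)$. Hence
\[
E_{\mathcal P}(\mathcal U)=\sum_p\sum_{u\in X_{-p}}c_{-p}(u)\,W_p(\mathcal U_{p,u},\mathcal U_{p,u}^c).
\]

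\textbf{Step 2: tensorization.} \cref{lem:entropy-tensorization} does not use integrality of the weights. Multiplying it by $2$ gives
\[
u(\mathcal U)\log\frac{P}{u(\mathcal U)}\le\sum_p\sum_{u\in X_{-p}}c_{-p}(u)\,c_p(\mathcal U_{p,u})\log\frac{C_p}{c_p(\mathcal U_{p,u})}.
\]
It therefore suffices to prove, for every fiber $U\subseteq X_p$,
\[
c_p(U)\log\frac{C_p}{c_p(U)}\le W_p(U,U^c).
\]
Summing this fiberwise bound against $c_{-p}(u)$ and comparing with Step 1 gives the lemma.

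\textbf{Step 3: the two-point inequality.} This is the only real check, though it is elementary.
\begin{itemize}
\item For $U=\varnothing$ or $U=X_p$, both sides vanish.
\item For $U=\{1\}$, the left side is $\frac{1}{p-1}\log p$, which equals the edge weight $w_p$.
\item For $U=\{0\}$, the left side is $\log\frac{p}{p-1}$. The needed inequality $(p-1)\log\bigl(1+\tfrac1{p-1}\bigr)\le\log p$ follows from $(p-1)\log(1+\tfrac1{p-1})<1\le\log p$ when $p\ge3$. When $p=2$ both sides equal $\log 2$.
\end{itemize}

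I expect the main care point to be Step 1: confirming that the cube edge weights factor exactly as $c_{-p}(u)\,w_p$, so that the boundary decomposes fiberwise with the same coefficients $c_{-p}(u)$ that appear in the tensorization bound. Once that matches, the statement reduces to the two-point check above.
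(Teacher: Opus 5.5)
Your proof is correct and takes essentially the same route as the paper. The paper applies entropy tensorization directly to the product Bernoulli measure $\mu=\rho/P$, which includes $p$ with probability $1/p$, and bounds the entropy of each crossing fiber by $(\log p)/p$. After multiplying by $P\mu_{-p}(B)$, this is exactly your two-point inequality $(p-1)\log\frac{p}{p-1}\le\log p$ weighted by $c_{-p}(u)$, so routing it through \cref{lem:entropy-tensorization} with weights $c_p(0)=1$, $c_p(1)=1/(p-1)$ is a rescaled restatement of the same computation.
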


\begin{proof}
    Set
    $
    \mu(B):=\frac{\rho(B)}{P}.
    $
    Then
    $$
    \mu(B)
    =
    \prod_{p\in B}\frac1p
    \prod_{p\in\mathcal P\setminus B}\frac{p-1}{p},
    $$
    so $\mu$ is obtained by including each $p\in\mathcal P$
    independently with probability $1/p$.
    Let $f=\mathbf1_{\mathcal U}$.  Entropy tensorization gives
    $$
    \Ent_\mu(f)
    \le
    \sum_{p\in\mathcal P}
    \mathbb E_{\mu_{-p}}\!\left[\Ent_{\mu_p}(f)\right].
    $$
    For fixed $p$, a choice of the remaining primes is a set
    $B\subseteq\mathcal P\setminus\{p\}$.  Varying whether $p$ is
    included gives the cube edge
    $
    B\longleftrightarrow B\cup\{p\}.
    $
    If this edge does not cross the boundary of $\mathcal U$, then $f$
    has the same value at its two endpoints and hence contributes zero
    entropy.  If it does cross the boundary, then $f$ takes the values
    $0$ and $1$. The endpoint at which
    $f=1$ has probability either $1/p$ or $(p-1)/p$
    and in either case its entropy is at most
    $
    (\log p)/p.
    $ 

    The probability of the choice $B$ outside the $p$-coordinate is
    $$
    \mu_{-p}(B)
    =
    \prod_{p'\in B}\frac1{p'}
    \prod_{p'\in\mathcal P\setminus(B\cup\{p\})}\frac{p'-1}{p'}.
    $$
    Hence
    $$
    \Ent_\mu(f)
    \le
    \sum_{\substack{p\in\mathcal P,\,
            B\subseteq\mathcal P\setminus\{p\}\\
            B\leftrightarrow B\cup\{p\}
    \text{ crosses }\partial\mathcal U}}
    \mu_{-p}(B)\frac{\log p}{p}.
    $$
    But
    $$
    P\,\mu_{-p}(B)\frac{\log p}{p}
    =
    \rho(B\cup\{p\})\log p,
    $$
    which is exactly the weight of the edge
    $B\leftrightarrow B\cup\{p\}$. Summing over all such crossing edges gives
    $
    P\,\Ent_\mu(f)
    \le
    E_{\mathcal P}(\mathcal U).
    $
    Combining this with 
    $   P\Ent_\mu(\mathbf1_{\mathcal U})
    =
    u(\mathcal U)\log\frac{P}{u(\mathcal U)}
    $
    proves the claim.
\end{proof}

We will also need the following boundary estimate.

\begin{lemma}
    \label{lem:cube-linear-bound}
    If $\varnothing\notin\mathcal U\subseteq2^{\mathcal P}$, then
    $$
    E_{\mathcal P}(\mathcal U)
    \ge
    \frac{\ell}{P}\,u(\mathcal U),
    \qquad
    \ell:=\min_{p\in\mathcal P}\log p.
    $$
\end{lemma}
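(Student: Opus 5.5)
The plan is to prove a symmetric strengthening by induction on $|\mathcal P|$, namely
\[
E_{\mathcal P}(\mathcal U)\;\ge\;\ell\,\Bigl|\frac{u(\mathcal U)}{P}-\mathbf 1_{\{\varnothing\in\mathcal U\}}\Bigr|
\qquad\text{for every }\mathcal U\subseteq 2^{\mathcal P},
\]
where $\ell$ may be any number with $\ell\le\log p$ for all $p\in\mathcal P$. This specializes to the lemma when $\varnothing\notin\mathcal U$. The two cases of the strengthening are exchanged by complementation, since $E_{\mathcal P}(\mathcal U)=E_{\mathcal P}(\mathcal U^c)$ and $u(\mathcal U^c)=P-u(\mathcal U)$. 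I work with the strengthened form because it survives restriction to subcubes, where $\varnothing$ may or may not lie in a fiber. The base case $\mathcal P=\varnothing$ is trivial: then $P=1$, $E=0$, and $u(\mathcal U)=\mathbf 1_{\{\varnothing\in\mathcal U\}}$.

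For the inductive step, fix $p\in\mathcal P$ and put $\mathcal P'=\mathcal P\setminus\{p\}$, $P'=\prod_{p'\in\mathcal P'}p'/(p'-1)\ge1$, and $P=P'p/(p-1)$. Split $\mathcal U$ into the two fibers
\[
\mathcal U_0=\{B\in\mathcal U:p\notin B\},\qquad \mathcal U_1=\{B\subseteq\mathcal P':B\cup\{p\}\in\mathcal U\}.
\]
Since $\rho(B\cup\{p\})=\rho(B)/(p-1)$, the cube edges of $2^{\mathcal P}$ fall into three groups. The edges inside the bottom layer have exactly the weights of $2^{\mathcal P'}$. The edges inside the top layer have those weights scaled by $1/(p-1)$. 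The vertical $p$-edges $B\leftrightarrow B\cup\{p\}$ have weight $\rho(B)\log p/(p-1)$. Consequently
\[
E_{\mathcal P}(\mathcal U)=E_{\mathcal P'}(\mathcal U_0)+\frac{E_{\mathcal P'}(\mathcal U_1)}{p-1}+\frac{\log p}{p-1}\sum_{B\in\mathcal U_0\triangle\mathcal U_1}\rho(B),
\]
and similarly $u(\mathcal U)=u(\mathcal U_0)+u(\mathcal U_1)/(p-1)$.

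Write $\alpha=u(\mathcal U_0)/P'$, $\beta=u(\mathcal U_1)/P'$, and $\varepsilon_0=\mathbf 1_{\{\varnothing\in\mathcal U\}}=\mathbf 1_{\{\varnothing\in\mathcal U_0\}}$. The induction hypothesis applies on $\mathcal P'$ with the same $\ell$. Applied to $\mathcal U_0$, it bounds the first term below by $\ell|\alpha-\varepsilon_0|$. The second term is nonnegative, so it can simply be dropped. For the vertical edges,
\[
\sum_{\mathcal U_0\triangle\mathcal U_1}\rho\;\ge\;|u(\mathcal U_0)-u(\mathcal U_1)|=P'|\alpha-\beta|\;\ge\;|\alpha-\beta|,
\]
so the third term is at least $\ell|\alpha-\beta|/(p-1)$. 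The target is $\ell\,|u(\mathcal U)/P-\varepsilon_0|$, and
\[
\frac{u(\mathcal U)}{P}=\frac{(p-1)\alpha+\beta}{p}.
\]
The triangle inequality then gives
\[
\Bigl|\frac{(p-1)(\alpha-\varepsilon_0)+(\beta-\alpha)+(\alpha-\varepsilon_0)}{p}\Bigr|\le|\alpha-\varepsilon_0|+\frac{|\beta-\alpha|}{p},
\]
which is dominated by the lower bound just obtained. This closes the induction.

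The main obstacle is finding the correct inductive statement. The naive claim with the hypothesis $\varnothing\notin\mathcal U$ is not inherited by the top fiber $\mathcal U_1$, which may contain $\varnothing$. The absolute-value form fixes this, and it is the natural form given the complement symmetry. Once it is in place, the only remaining checks are the bookkeeping of the factor $1/(p-1)$ relating the two layers and the observation that $P'\ge1$ lets us discard that factor in the vertical-edge bound.
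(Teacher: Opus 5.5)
Your proof is correct, but it takes a different route from the paper.

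\textbf{The paper's route.} The paper uses a canonical-path (flow) argument. Each $B\in\mathcal U$ sends mass $\rho(B)$ to $\varnothing$ by deleting its primes in a fixed order. Every such path must cross the boundary of $\mathcal U$, because $\varnothing\notin\mathcal U$. The edge $D\leftrightarrow D\setminus\{p\}$ carries at most $\sum_{B\supseteq D}\rho(B)\le P\rho(D)$, which is at most $P/\ell$ times its weight $\rho(D)\log p$. Summing these loads over the boundary edges gives $u(\mathcal U)\le (P/\ell)E_{\mathcal P}(\mathcal U)$.

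\textbf{Your route.} You instead induct on $|\mathcal P|$, splitting the cube along one prime into two layers. You use the self-similarity $\rho(B\cup\{p\})=\rho(B)/(p-1)$ of the weights, apply the induction hypothesis to the bottom layer, and let the vertical $p$-edges pay for the imbalance $|\alpha-\beta|$ between the layers. The bookkeeping all checks out: the three-way edge decomposition, the identity $u(\mathcal U)/P=((p-1)\alpha+\beta)/p$, and the final triangle inequality.

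\textbf{Comparison.} The paper's argument is one-shot and makes the source of the factor $P$ transparent: it is the mass of the up-set above $D$. Yours is a purely local recursion, in the same spirit as the tensorization behind Lemma~\ref{lem:divisor-cube-entropy}. It also visibly has slack, since you discard the entire top layer and the factor $P'\ge1$.

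\textbf{A remark on your framing.} The absolute-value strengthening is not actually needed. You only apply the induction hypothesis to $\mathcal U_0$, and $\varnothing\in\mathcal U_0$ if and only if $\varnothing\in\mathcal U$, so the lemma's own hypothesis is inherited. With $\varepsilon_0=0$, your last step reads $u(\mathcal U)/P=\alpha+(\beta-\alpha)/p\le\alpha+|\beta-\alpha|/p$, so the one-sided statement inducts directly. The obstacle you describe concerning $\mathcal U_1$ never arises, because you drop the $\mathcal U_1$ term.

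Also, the final comparison multiplies through by $\ell$, so you should take $0\le\ell\le\min_{p\in\mathcal P}\log p$. Negative $\ell$ makes the claim trivial anyway.
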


\begin{proof}
    Fix an ordering of the primes in $\mathcal P$.  For each
    $B\in\mathcal U$, consider the path from $B$ to $\varnothing$
    obtained by removing the primes of $B$ one at a time in this order. Give this path weight $\rho(B)$.  Since
    $\varnothing\notin\mathcal U$, every such path crosses the boundary
    of $\mathcal U$.

    Consider a cube edge obtained by removing one prime,
    $$
    D\longleftrightarrow D\setminus\{p\},
    \qquad D\subseteq\mathcal P,\quad p\in D.
    $$  Only paths starting at sets $B\supseteq D$ can use this edge. Since every $B\supseteq D$ is uniquely of the form
    $B=D\cup C$, with $C\subseteq\mathcal P\setminus D$, we have
    $$
    \sum_{B\supseteq D}\rho(B)
    =
    \rho(D)
    \prod_{p'\in\mathcal P\setminus D}\frac {p'}{p'-1}
    \le
    P\rho(D).
    $$ 
    On the other hand, since the edge has weight
    $
    \rho(D)\log p\ge \ell\rho(D),
    $
    we obtain
    $$
    P\rho(D)
    \le
    \frac{P}{\ell}
    \bigl(\text{weight of the edge}\bigr).
    $$  Since every path crosses the boundary of
    $\mathcal U$, summing over the boundary edges gives
    $$
    u(\mathcal U)
    \le
    \frac{P}{\ell}E_{\mathcal P}(\mathcal U),
    $$
    as claimed. 
\end{proof}

\subsubsection{The local potential}

We now pass from the divisor cube to the full block
$\mathcal D(q;N)$, which also contains the additional vertex $F:=qN$.
The only new resultant edge is
$
N\longleftrightarrow qN.
$
We now use the remaining freedom in the zero-mean potential $v$ from
\eqref{eq:boundary-potential-form} to extend the bound from the divisor
cube to the full block.

For the rest of this subsection, fix
$
\delta_0:=\frac1{100} 
$
and assume that
\begin{equation}
    P\le\frac54,
    \qquad
    \ell\ge8(\log q+1).
    \label{eq:good-local-block}
\end{equation} 
The primes will be chosen later so that these conditions hold. 

The total
normalized weight of the full block is $
Z=1+aP.
$
Put
\begin{equation}
    \Theta
    :=
    \log\frac{Z}{aP}
    =
    \log\left(1+\frac{q-1}{P}\right).
    \label{eq:Theta}
\end{equation}
In particular we have 
$
0<\Theta\le\log q.
$
For $A\subseteq \mathcal P $, define
\begin{equation}
    \begin{aligned}
        v_A :=
        \frac{\Theta}{2}+\delta_0
        -\frac{\log q}{2}\mathbf1_{\{A=\mathcal P\}}, \qquad
        v_F :=
        -\frac a2(P\Theta-\log q)-\delta_0aP.
    \end{aligned}
    \label{eq:local-potential}
\end{equation}
The second line is chosen precisely so that the weighted mean vanishes,
\begin{equation}
    \sum_{A\subseteq \mathcal P}\lambda(A)v_A+v_F=0.
    \label{eq:local-potential-zero}
\end{equation}

The following is the main local estimate.  It shows that the potential
\eqref{eq:local-potential} gives the uniform negative bound in \eqref{eq:one-block-support}
on the entire block $\mathcal D(q;N)$.  The proof is arranged so that supports not containing $F$ reduce
directly to the cube boundary estimate.  For supports containing $F$,
we pass to the complement.  If the edge $N$--$F$ crosses the
boundary, its weight gives the required negative bound. Otherwise we use the
stronger linear boundary estimate on the divisor cube.

\begin{lemma} 
    \label{lem:one-divisor-block}
    Assume \eqref{eq:good-local-block} and that $q$ is sufficiently large.
    Then for every $U\subseteq\mathcal D(q;N)$ we have 
    \begin{equation}
        \frac{c_U}{2}\log\frac{C_{\mathrm{loc}}}{c_U}
        -\frac12W(U,U^c)
        -\sum_{x\in U}\varphi(x)v_x
        \le
        -\delta_0\min\{c_U,C_{\mathrm{loc}}-c_U\},
        \label{eq:one-block-support}
    \end{equation}
    where $
    c_U:=\sum_{x\in U}\varphi(x).
    $
    The first term is interpreted as zero when $c_U=0$.
\end{lemma}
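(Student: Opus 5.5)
Throughout, divide \eqref{eq:one-block-support} by $\varphi(qN)$ and work with the normalized weights of \eqref{eq:local-vertex-weight}. In these units the total weight is $Z=1+aP$, the vertex $F$ has weight $1$, and the vertex $n_A$ has weight $a\rho(B)$, where $B=\mathcal P\setminus A$. For $U\subseteq\mathcal D(q;N)$, let $\mathcal U\subseteq 2^{\mathcal P}$ be the family of labels $B$ of the cube vertices in $U$, and put $u=u(\mathcal U)$. The vertex $N$ corresponds to $B=\varnothing$, with weight $a$. Cube edges carry normalized weight $a$ times the weights defining $E_{\mathcal P}$, and the edge $N$--$F$ has weight $a\log q$. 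The cases $U=\varnothing$ and $U=\mathcal D(q;N)$ give $0$ on both sides, the latter by \eqref{eq:local-potential-zero}. So assume $U$ is nonempty and proper, and split according to whether $F\in U$.

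\emph{Case $F\notin U$.} Here $c_U/\varphi(qN)=au$ and $Z/(au)=e^{\Theta}P/u$. The normalized left side is
$$
\frac{au}{2}\Bigl(\Theta+\log\frac Pu\Bigr)-\frac a2E_{\mathcal P}(\mathcal U)-\frac a2\log q\,\mathbf 1_{\{\varnothing\in\mathcal U\}}-au\Bigl(\frac\Theta2+\delta_0\Bigr)+\frac a2\log q\,\mathbf 1_{\{\varnothing\in\mathcal U\}}.
$$
The $\Theta$ terms cancel, and so do the two $\log q$ indicator terms; this cancellation is exactly why $v_A$ has the $-\frac{\log q}{2}\mathbf 1_{\{A=\mathcal P\}}$ correction. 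What remains is $\frac a2\bigl(u\log\frac Pu-E_{\mathcal P}(\mathcal U)\bigr)-\delta_0au$. By \cref{lem:divisor-cube-entropy} this is at most $-\delta_0 au=-\delta_0 c_U/\varphi(qN)$, which suffices since $\min\{c_U,C_{\mathrm{loc}}-c_U\}\le c_U$.

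\emph{Case $F\in U$.} Let $\mathcal V$ be the cube part of $U^c$ and $u'=u(\mathcal V)$. Since $Z-c_U=au'\le aP$ and $a=1/(q-1)$ is tiny, the minimum equals the normalized weight $au'$ of $U^c$. I will use three facts. First, $\log x\le x-1$ gives $\frac{c_U}{2}\log\frac{Z}{c_U}\le\frac{Z-c_U}{2}=\frac{au'}{2}$. Second, the boundary of $U$ is the boundary of $U^c$. Third, by \eqref{eq:local-potential-zero}, $-\sum_{x\in U}\lambda(x)v_x=\sum_{x\in U^c}\lambda(x)v_x=au'\bigl(\frac\Theta2+\delta_0\bigr)-\frac a2\log q\,\mathbf 1_{\{\varnothing\in\mathcal V\}}$. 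Together these bound the normalized left side by
$$
au'\Bigl(\frac{1+\Theta}{2}+\delta_0\Bigr)-\frac a2E_{\mathcal P}(\mathcal V)-a\log q\,\mathbf 1_{\{\varnothing\in\mathcal V\}},
$$
and we must show this is at most $-\delta_0au'$. If $\varnothing\in\mathcal V$, the edge $N$--$F$ crosses the boundary, so drop $E_{\mathcal P}\ge0$. Using $u'\le P\le\frac54$ and $\Theta\le\log q$, it suffices that $\frac54\bigl(\frac{1+\log q}{2}+2\delta_0\bigr)\le\log q$, which holds for $q$ large. If $\varnothing\notin\mathcal V$, \cref{lem:cube-linear-bound} together with \eqref{eq:good-local-block} gives $\frac a2E_{\mathcal P}(\mathcal V)\ge\frac{a\ell}{2P}u'\ge\frac{16}{5}(\log q+1)au'$. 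This dominates $\bigl(\frac{1+\log q}{2}+2\delta_0\bigr)au'$.

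The main obstacle is the case where $F\in U$ but $N\notin U$. There the heavy $N$--$F$ edge gives no help, the entropy term can be as large as order $u'\log q$, and only the linear bound of \cref{lem:cube-linear-bound} with the hypothesis $\ell\ge8(\log q+1)$ suffices. One must check carefully that the complement reduction (via $\log x\le x-1$) loses only $au'/2$, and that the potential's sign conventions combine the two $\log q$ contributions rather than cancelling them.
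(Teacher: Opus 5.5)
Your proposal is correct and follows the paper's own proof essentially step for step. Like the paper, you normalize by $\varphi(qN)$ and reduce the case $F\notin U$ to \cref{lem:divisor-cube-entropy}, using the cancellation of the $\Theta$ and $\log q$ terms. For $F\in U$ you pass to the complement using $\log x\le x-1$ and \eqref{eq:local-potential-zero}, then split on whether $N$ lies in the complement, using either the $N$--$F$ edge or \cref{lem:cube-linear-bound}.
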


\begin{proof}
    We normalize by $\varphi(qN)$ and write
    $$
    s(U):=\frac{c_U}{\varphi(qN)}
    =\sum_{x\in U}\lambda(x),
    \qquad
    Z:=\frac{C_{\mathrm{loc}}}{\varphi(qN)},
    \qquad
    E(U):=\frac{W(U,U^c)}{\varphi(qN)}.
    $$
    Thus it is enough to prove
    \begin{equation}
        \frac{s(U)}2\log\frac{Z}{s(U)}
        -\frac12E(U)
        -\sum_{x\in U}\lambda(x)v_x
        \le
        -\delta_0\min\{s(U),Z-s(U)\}.
        \label{eq:normalized-one-block}
    \end{equation}
    The cases $U=\varnothing$ and
    $U=\mathcal D(q;N)$ are immediate. Suppose first that $F\notin U$, so $U$ consists only of divisors
    of $N$.  For $n\mid N$, let
    $
    B(n):=\{p\in\mathcal P:p\nmid n\}
    $
    be the set of primes omitted from $n$, and put
    $
    \mathcal U:=\{B(n):n\in U\}$ and  $u:=u(\mathcal U).$ 
    Since $\lambda(n)=a\rho(B(n))$, we have $
    s(U)=au.
    $
    Put 
    $
    \varepsilon:=\mathbf1_{\{N\in U\}}.
    $
    The boundary of $U$ consists of edges of the divisor cube with one endpoint in $U$ and one outside
    $U$, and possibly the additional edge $N$--$F$. The normalized
    weights of the cube edges are $a$ times the weights used in
    $E_{\mathcal P}$, while the normalized weight of $N$--$F$ is
    $a\log q$.  Hence
    $$
    E(U)
    =
    aE_{\mathcal P}(\mathcal U)
    +a\varepsilon\log q.
    $$ 
    Moreover, by \eqref{eq:local-potential}, we have 
    $$
    \sum_{x\in U}\lambda(x)v_x
    =
    \frac{s(U)\Theta}{2}
    +\delta_0s(U)
    -\frac{a\varepsilon\log q}{2}.
    $$
    We also note that 
    $$
    \log\frac{Z}{s(U)}
    =
    \Theta+\log\frac{P}{u}.
    $$
    Substituting these identities into the left-hand side of
    \eqref{eq:normalized-one-block}, the terms involving $\Theta$ and
    $\varepsilon\log q$ cancel, and we obtain
    \begin{align} \label{eq:non-pos}
        \frac a2
        \left(
            u\log\frac Pu-E_{\mathcal P}(\mathcal U)
        \right)
        -\delta_0s(U).
    \end{align}
    By Lemma~\ref{lem:divisor-cube-entropy}, the term in parentheses is nonpositive.
    Hence  \eqref{eq:non-pos} is at most
    $
    -\delta_0s(U).
    $
    Since $s(U)\le aP<1$ while $Z-s(U)\ge1$, we have
    $$
    \min\{s(U),Z-s(U)\}=s(U),
    $$
    which proves \eqref{eq:normalized-one-block} when $F\notin U$.

    Now suppose that $F\in U$ and let $
    V:=\mathcal D(q;N)\setminus U.
    $
    Then $V\neq\varnothing$ and $F\notin V$. Put
    $$
    s:=s(V),
    \qquad
    \mathcal V:=\{B(n):n\in V\},
    \qquad
    \varepsilon:=\mathbf1_{\{N\in V\}},
    $$
    where $B(n)$ is as above.  Since the same edges cross the
    boundaries of $U$ and $V$, we have 
    $
    E(U)=E(V).
    $ Moreover,
    \eqref{eq:local-potential-zero} gives
    $$
    \sum_{x\in U}\lambda(x)v_x
    =
    -\sum_{x\in V}\lambda(x)v_x.
    $$
    Using
    $$
    (Z-s)\log\frac{Z}{Z-s}\le s,
    $$
    and applying the formulas from the first case to $V$, we obtain 
    \begin{equation}
        \begin{aligned}
&\frac{Z-s}{2}\log\frac{Z}{Z-s}
-\frac12E(V)
+\sum_{x\in V}\lambda(x)v_x \\
&\qquad\le
\frac s2
-\frac a2E_{\mathcal P}(\mathcal V)
+\frac{s\Theta}{2}
+\delta_0s
-a\varepsilon\log q.
        \end{aligned}
        \label{eq:complement-local-bound}
    \end{equation}
    If $\varepsilon=0$, then $N\notin V$, and hence
    $\varnothing\notin\mathcal V$. 
    \cref{lem:cube-linear-bound} then gives 
    $$
    E_{\mathcal P}(\mathcal V)
    \ge
    \frac{\ell}{P}u(\mathcal V)
    =
    \frac{\ell}{aP}s.
    $$
    Therefore the right-hand side of
    \eqref{eq:complement-local-bound} is at most
    $$
    \frac s2
    \left(
        1+\Theta+2\delta_0-\frac{\ell}{P}
    \right)
    \le
    -\delta_0s,
    $$
    where we used \eqref{eq:good-local-block} and
    $\Theta\le\log q$.

    Finally, suppose that $\varepsilon=1$.  Then the edge $N$--$F$
    crosses the boundary of $V$.  Since $E_{\mathcal P}(\mathcal V)\ge0$, we may discard the
    nonpositive second term in \eqref{eq:complement-local-bound}. Using $s\le aP$, we get
    $$
    \eqref{eq:complement-local-bound}
    \le
    \frac{aP}{2}(1+\Theta+2\delta_0)-a\log q.
    $$
    Since $P\le5/4$ and $\Theta\le\log q$, this is at most
    $
    -\delta_0aP
    \le
    -\delta_0s
    $
    for all sufficiently large $q$. This proves \eqref{eq:normalized-one-block}, and hence
    \eqref{eq:one-block-support}.
\end{proof}

\subsubsection{Products of blocks}

We now take products of the divisor blocks. Let
$
(q_j,N_j)
$
with $1\le j\le J$ be blocks whose prime factors are pairwise disjoint. Put
$$
\mathcal D_j
=
\mathcal D(q_j;N_j)
=
\{n:n\mid N_j\}\cup\{q_jN_j\}.
$$
Define
\begin{equation*}
    M
    :=
    \prod_{j\le J}q_jN_j,
    \qquad
    \mathcal N
    :=
    \left\{
        \prod_{j\le J}n_j:
        n_j\in\mathcal D_j
    \right\}.
\end{equation*}
Because the blocks are coprime we have 
$
\varphi\left(\prod_jn_j\right)
=
\prod_j\varphi(n_j).
$
In particular, writing
$
C_j=\sum_{n_j\in\mathcal D_j}\varphi(n_j),
$
we have 
$
C=\prod_{j\le J}C_j.
$
For each $j$, let $v_j$ be the potential from
\eqref{eq:local-potential}.  By \cref{lem:one-divisor-block}, every
nonempty proper $U\subsetneq\mathcal D_j$ satisfies
$$
\mathcal E_{\mathcal D_j}(U)
\le
-\delta_0
\min\{c_j(U),C_j-c_j(U)\}
\le
-\delta_0,
$$
since the vertex weights are positive integers.  Moreover
$
\sum_{x_j\in\mathcal D_j}\varphi(x_j)v_j(x_j)=0.
$
We verify that the resultant graph has the product
structure required by \cref{lem:product-support}.
Write $n=\prod_j n_j$ and $m=\prod_j m_j$,
where $n_j,m_j\in\mathcal D_j$.
Since the prime sets of distinct blocks are
disjoint, Apostol's formula shows that
$w(n,m)\ne0$ only when $n$ and $m$ differ
in exactly one coordinate, say $j$, by a
prime-power ratio. In that case,
multiplicativity of $\varphi$ gives
\[
w(n,m)
=
\left(\prod_{i\ne j}\varphi(n_i)\right)
w_j(n_j,m_j).
\]
Thus the edge weights are exactly those
of the weighted product graph, with no
additional edges.
Thus all hypotheses of
\cref{lem:product-support} hold.  With
$
v_n=\sum_{j\le J}v_j(n_j)$ and $
n=\prod_jn_j$
we obtain
$
\sum_{n\in\mathcal N}c_nv_n=0
$ and
$
\mathscr A(S)\le-\delta_0
$
for every nonempty proper $S\subsetneq\mathcal N$.

\subsubsection{Choosing the primes}

We now choose the primes defining the blocks. As in Venkatesh's construction~\cite{Venkatesh}, the logarithmic gain ultimately comes from Mertens' product theorem. Here we distribute the relevant primes among several blocks. There are three requirements on the choice of primes. First, we need
$$
P_j=\prod_{p\mid N_j}\frac p{p-1}
\le\frac54,
\qquad
\min_{p\mid N_j}\log p
\ge 8(\log q_j+1).
$$
Thus each block should contain only a bounded amount of contribution to the Euler product,
and its companion primes should lie well above its distinguished prime
$q_j$. Second, we want the total degree $C$ to remain comparable to
$\varphi(M)$.  Since
$$
\frac{C}{\varphi(M)}
=
\prod_{j\le J}
\left(1+\frac{P_j}{q_j-1}\right),
$$
and $P_j\le5/4$, it is therefore enough to arrange
$
\sum_j\frac1{q_j}<\infty,
$
which keeps this product bounded uniformly in $J$.  We will achieve
this by choosing the distinguished primes $q_j$ to grow by a fixed
multiplicative factor. Third, we want the prime blocks, together with the distinguished primes $q_j$, to contain essentially every prime up to a cutoff $X$.  Indeed,
$$
\frac{M}{\varphi(M)}
=
\prod_{p\mid M}\frac{p}{p-1},
$$
so the size of $M/\varphi(M)$ is determined by which primes divide $M$.  It will therefore be enough to arrange that all primes in an
interval
$
[Y_1,X']
$
with $\log X'\asymp\log X$ divide $M$.  Mertens' estimate will then
give
$$
\frac{M}{\varphi(M)}\gg\log X.
$$
We will take $X$ to be a small power of $\log d$, so that
$\log X\asymp\log\log d$.

These considerations lead to the following choice. By Bertrand’s postulate, for every $j$ we may choose an odd prime $q_{j+1}$ with $2q_j<q_{j+1}<4q_j.$ In particular we have 
$$
\sum_{j\ge1}\frac1{q_j}
\le
\frac1{q_1}\sum_{j\ge1}2^{-(j-1)}
<\infty.
$$
Next fix
$
\alpha=6/5
$
and choose $Y_1$ sufficiently large.  Define recursively
$
Y_{j+1}=Y_j^\alpha.
$
The reason for this scale is that Mertens' estimate gives

$$
\prod_{Y_j\le p<Y_{j+1}}\frac p{p-1}
\sim
\frac{\log Y_{j+1}}{\log Y_j}
=
\alpha.
$$
Since $\alpha=6/5<5/4$, after enlarging $Y_1$ we may therefore assume
that
\begin{equation}
    \prod_{Y_j\le p<Y_{j+1}}\frac p{p-1}
    \le\frac54
    \label{eq:block-Euler-upper}
\end{equation}
for every $j$.  At the same time, because
$\log q_j=O(j)$ grows linearly whereas
$\log Y_j=\alpha^{j-1}\log Y_1$ grows exponentially with $j$, we may arrange that
\begin{equation}
    \log Y_j\ge8(\log q_j+1)
    \label{eq:Y-vs-q}
\end{equation}
for every $j$. Finally, we choose the cutoff 
$
X=(\log d)^{1/20}.
$
The particular exponent $1/20$ is inessential.  We have
$
\log X=\frac1{20}\log\log d,
$
so this cutoff still gives the desired $\log\log d$ gain.
Let $J$ be the largest index with $Y_{J+1} \le X$. Since $J=O(\log\log X)$ and
$q_j\le q_1 4^{j-1}$, we have
$q_J\le(\log X)^{O(1)}<X$
for sufficiently large $X$.  For $1\le j\le J$, let
$$
\mathcal P_j
=
\{p\text{ prime}:Y_j\le p< Y_{j+1} \}
\setminus\{q_1,\ldots,q_J\},
$$
and put
$
N_j=\prod_{p\in \mathcal P_j}p.
$
Then the prime factors of the integers $q_jN_j$ are pairwise disjoint.
By \eqref{eq:block-Euler-upper} we have 
$$
P_j
=
\prod_{p\in\mathcal P_j}\frac p{p-1}
\le
\prod_{Y_j\le p<Y_{j+1}}\frac p{p-1}
\le\frac54,
$$
and every $p\mid N_j$ satisfies
$$
\log p
\ge
\log Y_j
\ge
8(\log q_j+1).
$$
Thus every block satisfies the hypotheses of
\cref{lem:one-divisor-block}. 

The preceding degree formula and the bounds
$P_j\le5/4$, $\sum_jq_j^{-1}<\infty$ give
$
C\asymp\varphi(M).
$
Moreover, every prime in
$
[Y_1,Y_{J+1})
$
divides $M$.  Since $J$ is maximal with $Y_{J+1}\le X$ we have 
$
Y_{J+1}\le X<Y_{J+2}=Y_{J+1}^{\alpha},
$
and hence
$
\log Y_{J+1}\asymp\log X.
$
Therefore by Mertens' estimate we get 
$$
\begin{aligned}
    \frac MC
    \asymp
    \frac{M}{\varphi(M)}
    \gg
    \prod_{Y_1\le p<Y_{J+1}}\frac p{p-1} 
    \asymp
    \log Y_{J+1}
    \asymp
    \log X
    \asymp
    \log\log d.
\end{aligned}
$$

\subsubsection{Verification of admissibility}

It remains to verify the quantitative conditions in
\cref{subsec:gluing-criterion}.  Since $1\in\mathcal D_j$ for every
$j$, we have $1\in\mathcal N$, while all primes used in the
construction are odd, so $2\notin\mathcal N$. The volume-preserving condition
$
\sum_{n\in\mathcal N}c_ny_n=0
$
and the uniform bound
$
\mathscr A(S)\le-\delta_0
$
for every nonempty proper $S\subsetneq\mathcal N$ were established
above.  It remains to verify the remaining quantitative bounds on $I/C$, $y_1$, $M$, $C$ and
$r$.

We first bound the total resultant weight $I$. Since $M$ is squarefree, every nonzero resultant edge is
$n\leftrightarrow pn$ with weight $c_n\log p$. Hence
$$
I \le \sum_{n\in\mathcal N} c_n\sum_{p\mid M}\log p = C\log M.
$$
Since $M\in\mathcal N$, we have $C\ge\varphi(M)$.  As $M$ is odd and
squarefree we have 
$$
\varphi(M)
=
\prod_{p\mid M}(p-1)
\ge
\prod_{p\mid M}\sqrt p
=
\sqrt M.
$$
Hence $\log M\le2\log C$ and consequently $ \frac IC\ll\log C.$

For the $n=1$ component, recall that
$
y_1=I/C- \omega_1/2+v_1.
$
Here $\omega_1\le\log M$.  Moreover,
$v_1=\sum_{j\le J}v_{1,j}$, and
\eqref{eq:local-potential} together with
$\Theta_j\le\log q_j$ gives
$
|v_{1,j}|\ll1+\log q_j.
$
Since each $q_j$ divides $M$ this gives 
$$
|v_1|
\ll
J+\sum_{j\le J}\log q_j
\ll
\log M.
$$
Thus
$
|y_1|\ll\log M\ll\log C.
$

It remains to compare the arithmetic parameters with $d$.
Every prime divisor of $M$ is at most $X$.  Hence, with
$
\vartheta(X):=\sum_{p\le X}\log p,
$
Chebyshev's estimate gives
$$
\log M\le\vartheta(X)=O(X)=o(\log d).
$$ 
Since every $n\in\mathcal N$ divides $M$ we have
$
C
=
\sum_{n\in\mathcal N}\varphi(n)
\le
\sum_{n\mid M}\varphi(n)
=
M,
$
so
$$
C=\exp(O(X))=d^{o(1)}.
$$
For the required lower bound on $C$, note that every prime in
$[Y_1,Y_{J+1})$ divides $M$.  Hence the lower Chebyshev bound   $ \vartheta(x)\gg x$
gives
$$
\log M
\ge
\sum_{Y_1\le p<Y_{J+1}}\log p
=
\vartheta(Y_{J+1})-\vartheta(Y_1)
\gg
Y_{J+1}, 
$$
Using 
$
X<Y_{J+2}=Y_{J+1}^{\alpha}
$
yields 
$
Y_{J+1}>X^{1/\alpha}.
$
Then $C \ge \sqrt M$ implies that 
$
\log C
\gg
X^{1/\alpha}.
$
With $\alpha=6/5$ and $X=(\log d)^{1/20}$, this gives
$
\log C\gg(\log d)^{1/24}\gg\log\log d,
$
and therefore
$
C\ge\log d
$
for all sufficiently large $d$. Finally, since 
$
r=\left\lfloor\frac dC\right\rfloor
$
and $C=d^{o(1)}$, we have 
$$
\frac{r}{C^3\log d}
\asymp
\frac{d}{C^4\log d}
=
d^{1-o(1)},
$$
and hence
$
r\gg C^3\log d.
$
All the admissibility conditions are therefore satisfied.   Together with
$
\frac MC\gg\log\log d,
$
this proves \cref{thm:arithmetic-realization}.

\section{The stochastic argument}
\label{sec:stochastic}

We now combine the arithmetic construction with the stochastic ellipsoid
method of Klartag and Abuya--Gargava--Zhao
\cite{Klartag,AbuyaGargavaZhao}. We select a lattice with an empty
initial ball and few expected contact orbits, and evolve it within the
$\tau$-invariant quadratic forms.  Since each contact orbit removes at
most one deformation direction, the active space will remain large enough
for the process to produce a lattice-free ellipsoid of large volume.

Throughout this section, we work with arbitrary
admissible gluing data and the associated
normalized Haar ensemble.

\subsection{Invariant quadratic forms}
\label{subsec:invariant-forms}

To preserve the cyclic symmetry throughout the stochastic
deformation, we restrict the process to a subspace of
$\tau$-invariant quadratic forms. Its dimension $q$ is the reservoir of available deformation
directions and will determine the scale of the eventual volume gain.

Recall that
$
d=Cr+a$ with $
0\le a<C.
$ 
The $n=1$ component has real dimension $r+a$. Since
$2\notin\mathcal N$ and $\varphi(n)$ is even for every $n>2$,
the embeddings of each nontrivial cyclotomic component occur in
pairs of complex conjugates. Hence, as real vector spaces,
$
\mathbb R^d
\simeq
\mathbb R^{r+a}
\oplus
\bigoplus_{\sigma}\mathbb C^r,
$
where $\sigma$ runs over the conjugate pairs. Define
\begin{equation}
    \mathcal Q
    =
    \mathbb R\operatorname{Id}_{r+a}
    \oplus
    \bigoplus_{\sigma}
    \operatorname{Herm}(\mathbb C^r)
    \subset
    \operatorname{Sym}(\mathbb R^d) =
    \{A\in M_d(\mathbb R):A^T=A\}.
    \label{eq:Qspace}
\end{equation}
Here a Hermitian matrix  $H\in\operatorname{Herm}(\mathbb C^r)$ is viewed as the real quadratic form
$
z\longmapsto z^*Hz
$
on $\mathbb C^r\simeq\mathbb R^{2r}$. The space $\mathcal Q$ consists of $\tau$-invariant quadratic
forms. Indeed, on the summand corresponding to
$\sigma:K_n\hookrightarrow\mathbb C$, the automorphism $\tau$ acts
by multiplication by $\sigma(\zeta_n)$, and therefore
$$
(\sigma(\zeta_n)z)^*H(\sigma(\zeta_n)z)
=
z^*Hz.
$$
Thus, for every $A\in\mathcal Q$ we have 
$
\langle A\tau x,\tau x\rangle
=
\langle Ax,x\rangle.
$
Moreover, $\mathcal Q$ contains $\operatorname{Id}_d$
and is closed under inversion of positive definite
elements. Since
$
\dim_{\mathbb R}\operatorname{Herm}(\mathbb C^r)=r^2
$
and there are $(C-1)/2$ conjugate pairs,
\begin{equation}
    q:=\dim_{\mathbb R}\mathcal Q
    =
    1+\frac{C-1}{2}r^2.
    \label{eq:q}
\end{equation}
Since $d=Cr+a$ with $0\le a<C$, we have $
q\asymp Cr^2\asymp dr.
$

We record two consequences of the admissibility conditions that will be
used below. Since $q\ll dr\le d^2$ and
$\log M\ll\log d$, we have 
\begin{equation}
    \log(Mq)=O(\log d).
    \label{eq:logMq}
\end{equation}
Furthermore,
$
r\gg C^3\log d
$ and $
C\ge\log d 
$
imply
\begin{equation}
    \frac{(\log d)^{3/2}}{\sqrt r}
    \ll
    \frac{\log d}{C^{3/2}}
    \longrightarrow0.
    \label{eq:R-stochastic-error}
\end{equation}

\subsection{An angular projection estimate}
\label{subsec:angular}

The contact probability depends on how strongly a lattice direction
interacts with the allowed deformation space $\mathcal Q$. We therefore
need an angular estimate for random directions on the sphere.

Let $\Pi:\operatorname{Sym}(\mathbb R^d)\longrightarrow\mathcal Q $ denote orthogonal projection with respect to the Hilbert--Schmidt inner product, and let $B_1,\ldots,B_q$ be an orthonormal basis of
$\mathcal Q$. For $\omega\in S^{d-1}$ we have 
$
\Pi(\omega\otimes\omega)
=
\sum_{j=1}^q
\langle B_j\omega,\omega\rangle B_j,
$
and hence
$$
\bigl\|\Pi(\omega\otimes\omega)\bigr\|_{\mathrm{HS}}^2
=
\sum_{j=1}^q
\langle B_j\omega,\omega\rangle^2.
$$

\begin{lemma}
    \label{lem:angular}
    Let $\omega$ be uniformly distributed on $S^{d-1}$.  Then
    $$
    \mathbb E
    \bigl\|
    \Pi(\omega\otimes\omega)
    \bigr\|_{\mathrm{HS}}^2
    =
    \frac{2q+d}{d(d+2)}.
    $$
    Moreover, there are absolute constants $c,K>0$ such that, whenever
    $$
    0\le\lambda
    \le
    c\sqrt{\frac{2q+d}{d}},
    $$
    we have
    $$
    \mathbb E
    \exp\left(
        \lambda
        \frac{d(d+2)}{2q+d}
        \bigl\|
        \Pi(\omega\otimes\omega)
        \bigr\|_{\mathrm{HS}}^2
    \right)
    \le
    K e^\lambda.
    $$

\end{lemma}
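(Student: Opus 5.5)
The plan is to compute $\|\Pi(\omega\otimes\omega)\|_{\mathrm{HS}}^2$ explicitly from the block structure of $\mathcal Q$ in \eqref{eq:Qspace}. The first moment then follows from the classical fourth moments of the sphere. The exponential bound will be proved by Gaussian representation and truncation.

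\emph{First moment.} For a fixed symmetric $B$ and uniform $\omega\in S^{d-1}$,
\[
\mathbb E\langle B\omega,\omega\rangle^2=\frac{(\operatorname{tr}B)^2+2\operatorname{tr}(B^2)}{d(d+2)}.
\]
Summing over the orthonormal basis $B_1,\dots,B_q$ gives $\sum_j\operatorname{tr}(B_j^2)=q$. We also have $\sum_j(\operatorname{tr}B_j)^2=\sum_j\langle B_j,\operatorname{Id}\rangle_{\mathrm{HS}}^2=\|\Pi(\operatorname{Id})\|_{\mathrm{HS}}^2=d$, since $\operatorname{Id}_d\in\mathcal Q$. This yields $(2q+d)/(d(d+2))$.

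\emph{Explicit formula.} Write $\omega=(x,(z_\sigma)_\sigma)$ according to $\mathbb R^d\simeq\mathbb R^{r+a}\oplus\bigoplus_\sigma\mathbb C^r$. The summands of $\mathcal Q$ are mutually HS-orthogonal, so the projection splits blockwise. On the first block, the projection onto $\mathbb R\operatorname{Id}_{r+a}$ contributes $|x|^4/(r+a)$. On a complex block, the real form of a Hermitian $H$ has real HS norm squared $2\|H\|_{\mathrm{HS}}^2$, and $\langle\omega\otimes\omega,H\rangle=\operatorname{tr}(Hz_\sigma z_\sigma^*)$. The projection of $zz^*$ onto $\operatorname{Herm}(\mathbb C^r)$ is $zz^*$ itself, so this block contributes $|z_\sigma|^4/2$. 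Hence
\[
\|\Pi(\omega\otimes\omega)\|_{\mathrm{HS}}^2=\frac{|x|^4}{r+a}+\frac12\sum_\sigma|z_\sigma|^4 .
\]
As a consistency check, this reproduces the first moment via Dirichlet moments.

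\emph{Exponential moment.} Represent $\omega=g/|g|$ with $g$ standard Gaussian. Then $Y_0=|x_g|^2\sim\chi^2_{r+a}$ and $Y_\sigma=|z_{\sigma,g}|^2\sim\chi^2_{2r}$ are independent, and $\gamma:=d(d+2)/(2q+d)\asymp C$ because $2q+d\asymp Cr^2$. The exponent is
\[
\lambda\gamma\,|g|^{-4}\Bigl(\tfrac{Y_0^2}{r+a}+\tfrac12\textstyle\sum_\sigma Y_\sigma^2\Bigr).
\]
We split according to the event $E=\{|g|^2\ge d(1-\theta)\}$, with $\theta$ a small multiple of $1/\sqrt{r}$ up to logarithms.

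Off $E$, the exponent is at most $\lambda\gamma\le\lambda d$, since $\|\Pi(\omega\otimes\omega)\|\le1$, while $\mathbb P(E^c)$ decays like $e^{-c\theta^2 d}$. Because $\lambda\ll\sqrt{q/d}\asymp\sqrt r$, this piece is bounded once $\theta$ is chosen suitably. If the crude bound $\lambda d$ is too lossy here, we will instead use a dyadic decomposition of $|g|^2$.

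On $E$, the denominator costs a factor $1+O(\theta)$, and we bound the moment generating function of the independent sum. Write $Y_\sigma=2r+\sqrt{2r}\,\xi_\sigma$. Then $Y_\sigma^2/2=2r^2+2r\sqrt{2r}\,\xi_\sigma+r\xi_\sigma^2$. After multiplying by $\gamma/d^2\approx 1/(Cr^2)$, the mean of the exponent is $\lambda(1+O(1/r))$. The fluctuation part is $\lambda\cdot O(C^{-1}r^{-1/2})\sum_\sigma\xi_\sigma$ plus a quadratic term of size $\lambda\,O(C^{-1}r^{-1})\sum_\sigma\xi_\sigma^2$. Here $\sum_\sigma\xi_\sigma$ has about $C/2$ independent subexponential summands, so its contribution has mgf bounded by $\exp(O(\lambda^2/(Cr)))=O(1)$. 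The quadratic term is handled because $\xi_\sigma^2$ is subexponential at scale $O(1)$ after truncating $\xi_\sigma\le\sqrt r$; this truncation is automatic on $E$ since each $Y_\sigma\le|g|^2$. The $Y_0$ block is treated identically.

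The main obstacle is this step. The variables $Y_\sigma^2$ have only stretched-exponential tails, so the moment generating function must be controlled uniformly for $\lambda$ up to $c\sqrt{(2q+d)/d}$, exploiting the self-normalization $Y_\sigma\le|g|^2$. The first thing I would try is the linearization above together with Bernstein's inequality for subexponential variables; this should produce the constant $K$ and the $e^{\lambda}$ factor coming from the mean.
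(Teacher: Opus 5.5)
Your first-moment computation is the paper's (fourth moments on the sphere plus Parseval with $\operatorname{Id}_d\in\mathcal Q$). Your block formula $\|\Pi(\omega\otimes\omega)\|_{\mathrm{HS}}^2=|x|^4/(r+a)+\frac12\sum_\sigma|z_\sigma|^4$ is also correct. The gap is in the exponential moment. The decisive step is not only left as ``the first thing I would try''; two of your concrete steps fail as stated.

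\emph{Incompatible choices of $\theta$.} On $E$, replacing $|g|^{-4}$ by $(d(1-\theta))^{-2}$ multiplies the mean of the exponent by $(1-\theta)^{-2}$. By Jensen this costs at least $e^{2\lambda\theta+O(\lambda\theta^2)}$, which is bounded only if $\theta\lesssim1/\lambda$. So for $\lambda\asymp\sqrt r$ no logarithmic slack is allowed. Off $E$, the crude bound is $e^{\lambda\gamma}$ with your $\gamma=d(d+2)/(2q+d)\asymp C$, while $\Pr(E^c)\approx e^{-c\theta^2d}$ with $d\asymp Cr$. Absorbing it needs $\theta^2\gtrsim\lambda/r$. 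Both conditions hold only for $\lambda\lesssim r^{1/3}$, not up to $c\sqrt{(2q+d)/d}\asymp c\sqrt r$. (For the downstream application $\lambda=O(\log d)$ would suffice, but that does not prove the lemma as stated.) A dyadic decomposition in $|g|^2$ does not repair this by itself: on each shell you must combine an mgf bound with the shell's small probability, e.g.\ by H\"older, and the proposal does not do this.

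\emph{The truncation is not automatic.} The claim that $\xi_\sigma\le\sqrt r$ holds ``automatically on $E$ since $Y_\sigma\le|g|^2$'' is false, because $E$ bounds $|g|^2$ from below, not from above. On $E$ you only get $Y_\sigma^2/|g|^4\le\min\{Y_\sigma/(d(1-\theta)),1\}^2$, i.e.\ truncation at $Y_\sigma\approx d$, or $\xi_\sigma\lesssim C\sqrt r$. Without some truncation, $\mathbb E e^{\epsilon Y_\sigma^2}=\infty$ for every $\epsilon>0$, so your product mgf diverges. With truncation only at level $d$, the ``subexponential at scale $O(1)$'' claim needs a separate tail argument.

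\emph{How to repair your route.} The missing idea is that $\omega=g/|g|$ is independent of $|g|$. Take $E=\{|g|^2\ge d\}$, whose probability is bounded below by an absolute constant. Then $\mathbb E e^{\lambda\gamma\|\Pi(\omega\otimes\omega)\|^2}=\Pr(E)^{-1}\mathbb E\bigl[e^{\lambda\gamma\|\Pi(\omega\otimes\omega)\|^2}\mathbf 1_E\bigr]$, which removes both $\theta$ and the off-$E$ term. On $E$, bound each block by its truncation at level $d$, drop $\mathbf 1_E$, and factor over the independent blocks. Then check that each complex block contributes $\exp\bigl(\kappa\,\mathbb E Y_\sigma^2+O(\lambda^2/(C^2r))+e^{-cr}\bigr)$ with $\kappa=\lambda\gamma/(2d^2)$. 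On $\{Y_\sigma>4r\}$ use $\kappa\min\{Y_\sigma,d\}^2\le\kappa dY_\sigma$, where $\kappa d\asymp\lambda/r\ll1$. The untruncated means sum to $\lambda(d+2)/d$, which gives $Ke^\lambda$.

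\emph{The paper's route.} The paper avoids all of this. Since $\Pi$ is a contraction, $F=\sqrt{d(d+2)/(2q+d)}\,\|\Pi(\omega\otimes\omega)\|_{\mathrm{HS}}$ is $2\sqrt{d(d+2)/(2q+d)}$-Lipschitz on $S^{d-1}$, and $\mathbb EF\le(\mathbb EF^2)^{1/2}=1$. Spherical concentration then gives $\Pr\{F>1+s\}\le2e^{-c_0\alpha s^2}$ with $\alpha=(2q+d)/d$. Integrating $e^{\lambda F^2}$ against this tail yields $Ke^\lambda$ for $\lambda\le c\sqrt\alpha$. That argument needs neither the block formula nor a Gaussian decoupling, and it works for any subspace $\mathcal Q$ containing $\operatorname{Id}_d$.
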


\begin{proof}
    For any real symmetric matrix $B$, let
    $\lambda_1,\ldots,\lambda_d$ be its eigenvalues. By rotational
    invariance, we may assume
    $B=\operatorname{diag}(\lambda_1,\ldots,\lambda_d)$. Using
    $$
    \mathbb E\omega_i^4=\frac{3}{d(d+2)},
    \qquad
    \mathbb E(\omega_i^2\omega_j^2)=\frac{1}{d(d+2)}
    \quad (i\ne j),
    $$
    we obtain
    $$
    \begin{aligned}
        \mathbb E\langle B\omega,\omega\rangle^2
        =
        \frac{
            3\sum_i\lambda_i^2
            +2\sum_{i<j}\lambda_i\lambda_j
        }{d(d+2)}
        =
        \frac{
            2\operatorname{Tr}(B^2)
            +\operatorname{Tr}(B)^2
        }{d(d+2)}.
    \end{aligned}
    $$
    Summing the preceding identity gives
    $$
    \mathbb E
    \bigl\|
    \Pi(\omega\otimes\omega)
    \bigr\|_{\mathrm{HS}}^2
    = 
    \frac{
        2q+
        \sum_{j=1}^q\operatorname{Tr}(B_j)^2
    }{d(d+2)}.
    $$
    Since $\operatorname{Tr}(B_j)=\langle B_j,\operatorname{Id}_d\rangle_{\mathrm{HS}}$
    and $\operatorname{Id}_d\in\mathcal Q$, Parseval gives
    $
    \sum_{j=1}^q \operatorname{Tr}(B_j)^2
    =
    \|\operatorname{Id}_d\|_{\mathrm{HS}}^2
    =
    d,
    $
    which proves the first assertion.

    Set
    $$
    F(\omega)
    =
    \sqrt{
        \frac{d(d+2)}{2q+d}
    }\,
    \bigl\|
    \Pi(\omega\otimes\omega)
    \bigr\|_{\mathrm{HS}}.
    $$
    Then $\mathbb EF^2=1.$ Moreover, for $u,v\in S^{d-1}$ we have  $ \|u\otimes u-v\otimes v\|_{\mathrm{HS}} \le 2\|u-v\|.$ Since orthogonal projection is contractive in Hilbert--Schmidt norm, it follows
    that $F(\omega)$ is $L$-Lipschitz on $S^{d-1}$, with Lipschitz constant 
    $$
    L
    \le
    2\sqrt{\frac{d(d+2)}{2q+d}}.
    $$

    By the concentration inequality for Lipschitz functions on the sphere
    (see, for example, \cite[Theorem~5.1.3]{VershyninHDP}), there is an absolute constant
    $c_0>0$ such that
    $$
    \Pr\{|F-\mathbb EF|>s\}
    \le
    2\exp\left(-c_0\frac{d}{L^2}s^2\right),
    \qquad s\ge0.
    $$
    Since $\mathbb EF^2=1$, Cauchy--Schwarz gives $\mathbb EF\le1.$ Since 
    $
    \frac{d}{L^2}
    \ge
    \frac{2q+d}{4(d+2)}
    \asymp
    \frac{2q+d}{d} =: \alpha
    $
    we obtain, after adjusting the constant $c_0$,
    $$
    \Pr\{F>1+s\}
    \le
    2e^{-c_0\alpha s^2},
    \qquad s\ge0.
    $$
    Choose $c_1>0$ sufficiently small, and suppose that $0\le\lambda\le c_1\sqrt{\alpha}.$ Since $\alpha\ge1$, after decreasing $c_1$ if necessary, we have
    $\lambda\le c_0\alpha/4$. Moreover, by Young's inequality,
    $$
    2\lambda s
    \le
    \frac{c_0\alpha}{4}s^2
    +
    \frac{4\lambda^2}{c_0\alpha}.
    $$
    Hence, for $s\ge0$, we have 
    $$
    \begin{aligned}
        \lambda(1+s)^2-c_0\alpha s^2
        =
        \lambda+2\lambda s+(\lambda-c_0\alpha)s^2
        \le
        \lambda
        +
        \frac{4\lambda^2}{c_0\alpha}
        -
        \frac{c_0\alpha}{2}s^2.
    \end{aligned}
    $$
    For $\lambda>0$, integration by parts gives
    $$
    \mathbb E e^{\lambda F^2}
    \le
    e^\lambda
    +
    2\lambda
    \int_1^\infty
    t e^{\lambda t^2}\Pr\{F>t\}\,dt.
    $$
    Writing $t=1+s$ and using the preceding tail estimate, we obtain
    $$
    \begin{aligned}
        \mathbb E e^{\lambda F^2}
    &\le
    e^\lambda
    +
    4\lambda
    \int_0^\infty
    (1+s)
    e^{\lambda(1+s)^2-c_0\alpha s^2}\,ds
    \\
    &\le
    e^\lambda
    \left[
        1+
        4e^{4\lambda^2/(c_0\alpha)}
        \lambda
        \int_0^\infty
        (1+s)e^{-c_0\alpha s^2/2}\,ds
    \right].
    \end{aligned}
    $$
    Since
    $$
    \int_0^\infty
    (1+s)e^{-c_0\alpha s^2/2}\,ds
    =
    O(\alpha^{-1/2}+\alpha^{-1})
    =
    O(\alpha^{-1/2}),
    $$
    and $\frac{\lambda^2}{\alpha}\le c_1^2$, the quantity in brackets is bounded by a constant. Thus
    $
    \mathbb E e^{\lambda F^2}
    \le
    K e^\lambda
    $
    for some absolute $K>0$. This concludes the proof.
\end{proof}

\subsection{The stochastic estimate with weights given by orbit size}
\label{subsec:stochastic-estimate}
Because every quadratic form in $\mathcal Q$ is $\tau$-invariant, all
vectors in a single $\tau$-orbit impose the same linear contact
condition. Thus the number of deformation directions lost to a contact
set is controlled by the number of contact orbits rather than by the
number of contact vectors. This is recorded by assigning a nonzero
vector $x$ the weight $1/M_{\operatorname{supp}(x)}$, the reciprocal
of its orbit size. We now adapt the Klartag--Abuya--Gargava--Zhao stochastic estimate to
these variable orbit weights.

Recall that $\Lambda_g^*=\lambda T_y gL.$
Here $g$ acts separately and $K_n$-linearly on each cyclotomic
component $V_n$, while $T_y$ acts by a scalar on each $V_n$.
The final factor $\lambda$ is a common scalar. Hence all three maps
preserve cyclotomic support and commute with $\tau$. Consequently the
formula for orbit size in \cref{lem:orbit} remains valid on
$\Lambda_g^*$.
In particular, $
M_{\operatorname{supp}(x)}=M $
for every vector of full cyclotomic support, while vectors of proper support
may have smaller orbits. 

We now introduce a stochastic deformation incorporating these orbit weights. Recall that $v_d$ is the volume of the Euclidean unit ball in $\mathbb R^d$. Fix a $\tau$-invariant lattice
$
\Lambda\subset\mathbb R^d
$
of covolume $v_d$.  For a positive definite quadratic form $A\in\mathcal Q$, viewed as a symmetric
operator on $\mathbb R^d$, let 
$
E_A
=
\{x\in\mathbb R^d:\langle Ax,x\rangle<1\}
$
be the associated open ellipsoid. Since $A$ is positive definite, we have 
$$
E_A=A^{-1/2}B_1,
\qquad
\vol(E_A)=v_d(\det A)^{-1/2}.
$$
If
$
E_A\cap\Lambda=\{0\},
$
define
$$
F_A
=
\left\{
    B\in\mathcal Q:
    \langle Bx,x\rangle=0
    \text{ for every }
    x\in\partial E_A\cap\Lambda
\right\}.
$$
Thus $F_A$ is the linear space of deformation
directions that preserve all contacts because if $B\in F_A$ and $x\in\partial E_A\cap\Lambda$, then
$
\langle (A+sB)x,x\rangle=1
$
for every $s$ for which $A+sB$ remains positive definite.

We use the process constructed in
\cite[Proposition~2.3 and Corollary~2.4]{Klartag}. This process preserves
all contacts, and we restrict its Brownian motion to the fixed Euclidean
space $\mathcal Q$. See also
\cite[Proposition~3.2]{AbuyaGargavaZhao}.  Write $P_A$ for orthogonal
projection onto $F_A$.  Starting from a lattice-free scalar matrix
$A_0=a_0\operatorname{Id}$, the process satisfies
$$
dA_t=P_{A_t}\,dW_t,
$$
where $W_t$ is standard Hilbert--Schmidt Brownian motion in
$\mathcal Q$.  It is constant after the active space becomes zero.
It stays positive definite and lattice-free, and preserves all contacts. For $T>0$, $A_T$ denotes the value of the stochastic deformation
at time $T$, and $E_{A_T}$ is the corresponding terminal ellipsoid.  

We now introduce the quantities needed to bound the probability that a
fixed lattice vector becomes a contact by time $t$, and hence to
control the expected weighted number of contacts. We write $\Pr_{\mathrm{Br}}$ and $\mathbb E_{\mathrm{Br}}$ for
probability and expectation with respect to the Brownian motion driving
the process, and set
$$
\Psi(s)=\Pr\{Z\ge s\},
\qquad Z\sim N(0,1).
$$
Put
$
a_0=(1-1/d)^{-2}.
$
Since $A_0=a_0\operatorname{Id}$, the initial ellipsoid is the ball 
$
E_{A_0}=B_{1-1/d}.
$
Thus $B_{1-1/d}\cap\Lambda=\{0\}$ means precisely that the process
starts from a lattice-free ellipsoid. For $0\ne x\in\mathbb R^d$, define
$$
\beta(x)^2
=
\frac{\|\Pi(x\otimes x)\|_{\mathrm{HS}}^2}{\|x\|^4}.
$$
Since $\operatorname{Id}_d/\sqrt d\in\mathcal Q$, we have $\beta(x)^2\ge\frac1d,$ and hence $\beta(x)>0$. 

\begin{lemma}
    \label{lem:stochastic-prelim}
    There is an absolute constant $C_{\mathrm{op}}>0$ such that, for
    $t>0$,
    $$
    \Pr_{\mathrm{Br}}\left\{
        \|A_t-a_0\operatorname{Id}\|_{\mathrm{op}}
        >
        C_{\mathrm{op}}\sqrt{dt}
    \right\}
    \le e^{-2d}.
    $$
    Moreover, for every $0\ne x\in\Lambda$ we have 
    $$
    \Pr_{\mathrm{Br}}\left\{
        x \text{ becomes a contact by time }t
    \right\}
    \le
    2\Psi\left(
        \frac{a_0-\|x\|^{-2}}
        {\beta(x)\sqrt t}
    \right).
    $$
\end{lemma}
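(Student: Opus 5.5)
Both assertions follow from the martingale structure of the process $dA_t=P_{A_t}\,dW_t$ constructed in \cite[Proposition~2.3]{Klartag}. We restrict the driving Brownian motion to $\mathcal Q$, following \cite[Proposition~3.2]{AbuyaGargavaZhao}. We take $A_t$ to be stopped at the first time it would leave the positive definite cone, so that every stochastic integral below is well defined.

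For the operator norm bound, write $M_t=A_t-a_0\operatorname{Id}=\int_0^tP_{A_s}\,dW_s$, a $\mathcal Q$-valued martingale. For a fixed unit vector $\theta\in S^{d-1}$, the scalar process
\[
\langle M_t\theta,\theta\rangle=\int_0^t\bigl\langle P_{A_s}\,dW_s,\theta\otimes\theta\bigr\rangle_{\mathrm{HS}}
\]
is a continuous martingale. Its quadratic variation is $\int_0^t\|P_{A_s}(\Pi(\theta\otimes\theta))\|_{\mathrm{HS}}^2\,ds\le t$, because $P_{A_s}$ and $\Pi$ are contractions and $\|\theta\otimes\theta\|_{\mathrm{HS}}=1$. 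By the Dambis--Dubins--Schwarz representation together with Bernstein's inequality for martingales, $\Pr\{\sup_{s\le t}|\langle M_s\theta,\theta\rangle|>u\}\le 2e^{-u^2/(2t)}$. Next we take a $1/4$-net $\mathcal M$ of $S^{d-1}$ with $|\mathcal M|\le 9^d$. The standard net argument gives $\|M_t\|_{\mathrm{op}}\le 2\max_{\theta\in\mathcal M}|\langle M_t\theta,\theta\rangle|$, since $M_t$ is symmetric. A union bound with $u=\tfrac12 C_{\mathrm{op}}\sqrt{dt}$ gives total probability at most $2\cdot 9^de^{-C_{\mathrm{op}}^2d/8}$. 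This is at most $e^{-2d}$ once $C_{\mathrm{op}}$ is a large absolute constant, for example $C_{\mathrm{op}}^2\ge 8(2+\log 9+\log 2)$.

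For the contact probability, fix $0\ne x\in\Lambda$ and set $X_s=\langle A_sx,x\rangle=\langle A_s,x\otimes x\rangle_{\mathrm{HS}}$. If $\|x\|\le 1-1/d$ the right-hand side equals $2\Psi(\text{nonpositive})\ge1$, so the bound is trivial; this covers lattice vectors in the closed initial ball, which are excluded anyway by lattice-freeness. Otherwise $X_0=a_0\|x\|^2>1$, and $x$ becomes a contact by time $t$ exactly when $X_s$ reaches $1$ for some $s\le t$. Contacts are preserved, so once $X$ hits $1$ it stays there; but only the hitting event matters. The process $X_s$ is a continuous martingale with quadratic variation
\[
\int_0^s\|P_{A_r}\Pi(x\otimes x)\|_{\mathrm{HS}}^2\,dr\le s\,\|\Pi(x\otimes x)\|_{\mathrm{HS}}^2=s\,\beta(x)^2\|x\|^4 .
\]
The key point is that the noise enters only through its projection onto $\mathcal Q$, which is where $\beta(x)$ comes from.

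By Dambis--Dubins--Schwarz, $X_s=X_0+B_{\langle X\rangle_s}$ for a standard Brownian motion $B$. The event of hitting $1$ by time $t$ is therefore contained in the event that $B$ reaches level $-(X_0-1)$ within time $\beta(x)^2\|x\|^4t$. By the reflection principle this has probability $2\Psi\bigl((X_0-1)/(\beta(x)\|x\|^2\sqrt t)\bigr)$. Dividing the numerator and denominator by $\|x\|^2$ gives exactly $2\Psi\bigl((a_0-\|x\|^{-2})/(\beta(x)\sqrt t)\bigr)$. The time-change step needs care when the quadratic variation is degenerate or the process is stopped at $P_A=0$; we handle this by extending $B$ with an independent Brownian motion in the usual way, which only enlarges the event.

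The main obstacle is the first assertion. We must make sure that the per-direction variance bound $t$ holds uniformly despite the adaptive projection $P_{A_s}$, which the contraction argument handles. We must also check that the net constant $9^d$ is absorbed by the Gaussian tail, so that the final probability is $e^{-2d}$ with an absolute constant $C_{\mathrm{op}}$; this amounts to choosing $C_{\mathrm{op}}$ large enough, as computed above.
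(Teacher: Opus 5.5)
Your proof is correct. The paper gives no argument of its own and simply cites AGZ, Lemmas~3.4 and~3.6. You reconstruct the standard argument behind those lemmas: a $1/4$-net combined with exponential tail bounds for the scalar martingales $\langle A_t\theta,\theta\rangle$, whose quadratic variation is at most $t$; and a Dambis--Dubins--Schwarz time change plus the reflection principle for contacts, where the variance bound $\|\Pi(x\otimes x)\|_{\mathrm{HS}}^2$ produces the factor $\beta(x)$.
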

\begin{proof}
    This is \cite[Lemmas~3.4 and~3.6]{AbuyaGargavaZhao}.
\end{proof}
Define the good event 
$
\mathcal G_t
=
\left\{
    \|A_t-a_0\operatorname{Id}\|_{\mathrm{op}}
    \le
    C_{\mathrm{op}}\sqrt{dt}
\right\}.
$
Set
$$
\rho_t
=
\frac1{\sqrt{a_0-C_{\mathrm{op}}\sqrt{dt}}},
$$
whenever the denominator is positive.  On $\mathcal G_t$, every
contact vector satisfies
$
1-\frac1d\le\|x\|\le\rho_t.
$
Here the lower bound follows from the initial lattice-free ball, while
for $x\in\partial E_{A_t}\cap\Lambda$,
$$
1=\langle A_tx,x\rangle
\ge
\bigl(a_0-C_{\mathrm{op}}\sqrt{dt}\bigr)\|x\|^2.
$$
Accordingly, define
\begin{equation}
    K_t(\Lambda)
    =
    \sum_{0\ne x\in\Lambda}
    \frac1{M_{\operatorname{supp}(x)}}
    \mathbf 1_{\{1-1/d\le\|x\|\le\rho_t\}}
    \Psi\left(
        \frac{a_0-\|x\|^{-2}}
        {\beta(x)\sqrt t}
    \right).
    \label{eq:Kt}
\end{equation}
The quantity $K_t(\Lambda)$ will be used to bound the expected contact
count at time $t$, with each vector weighted by the reciprocal of its
orbit size.

\begin{proposition}[Klartag--AGZ estimate with weights given by orbit size]
    \label{prop:stochastic}
    Suppose
    $$
    B_{1-1/d}\cap\Lambda=\{0\}.
    $$
    Then
    $$
    \dim F_{A_t}
    \ge
    q-
    \sum_{x\in\partial E_{A_t}\cap\Lambda}
    \frac1{M_{\operatorname{supp}(x)}},
    $$
    and
    $$
    \mathbb E_{\mathrm{Br}}
    \sum_{x\in\partial E_{A_t}\cap\Lambda}
    \frac1{M_{\operatorname{supp}(x)}}
    \le
    2K_t(\Lambda)+e^{-cd}.
    $$
    Moreover, whenever $dT\le c$,
    \begin{align}
        \mathbb E_{\mathrm{Br}}\log\det A_T
        \le{}&
        -\frac{qT}{2}
        +
        \frac12
        \int_0^T
        \mathbb E_{\mathrm{Br}}
        \sum_{x\in\partial E_{A_t}\cap\Lambda}
        \frac1{M_{\operatorname{supp}(x)}}\,dt
        \notag\\
             &+
             O\left(
                 1+\frac{qT}{d}
                 +qT\sqrt{dT}
                 +qTe^{-2d}
             \right).
             \label{eq:stochastic-det}
    \end{align}
\end{proposition}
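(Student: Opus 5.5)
The statement has three parts, and I would prove them in order.

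\textbf{Dimension bound for $F_{A_t}$.} Every $B\in\mathcal Q$ is $\tau$-invariant, so $\langle B\tau^jx,\tau^jx\rangle=\langle Bx,x\rangle$. Moreover $\tau$ preserves $\Lambda$, and $\tau$ preserves $E_{A_t}$ because $A_t\in\mathcal Q$. Hence the contact set $\partial E_{A_t}\cap\Lambda$ is a union of $\tau$-orbits. The orbit of a contact $x$ contributes the single linear condition $\langle Bx,x\rangle=0$ on $\mathcal Q$, so
\[
\operatorname{codim}F_{A_t}\le\#\{\text{contact orbits}\}.
\]
By \cref{lem:orbit}, transferred to $\Lambda$ as noted before \eqref{eq:Kt}, the orbit of $x$ has size $M_{\operatorname{supp}(x)}$. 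Therefore
\[
\#\{\text{contact orbits}\}=\sum_{x\in\partial E_{A_t}\cap\Lambda}\frac{1}{M_{\operatorname{supp}(x)}},
\]
which is the first claim.

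\textbf{Weighted contact count.} I split the expectation according to the good event $\mathcal G_t$.

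On $\mathcal G_t$, every contact satisfies $1-1/d\le\|x\|\le\rho_t$. Taking expectations and applying the second part of \cref{lem:stochastic-prelim} to each such $x$ bounds this part by $2K_t(\Lambda)$.

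On $\mathcal G_t^c$, I use a deterministic bound on the weighted contact count. The sum over contacts is at most the number of contact orbits, which is at most $q$, because the contact conditions are linear on $\mathcal Q$ and a set of independent ones has size at most $q$. This needs care: the orbit count could exceed $q$ when many conditions are dependent. Instead, I will bound the count via Klartag's observation that contact points of a lattice-free ellipsoid satisfy $x\not\equiv\pm y \pmod{2\Lambda}$ for distinct pairs. Hence the number of contacts is at most $2^{d+1}$, and the weighted count is at most that as well. Multiplying by $\Pr(\mathcal G_t^c)\le e^{-2d}$ gives at most $2^{d+1}e^{-2d}\le e^{-cd}$.

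\textbf{Log-determinant inequality.} I follow \cite{Klartag} and \cite[Prop.~3.2]{AbuyaGargavaZhao}. By Itô's formula applied to $\log\det A_t$, with $dA_t=P_{A_t}dW_t$,
\[
d\log\det A_t=\operatorname{Tr}(A_t^{-1}P_{A_t}dW_t)-\tfrac12\sum_{j}\operatorname{Tr}\bigl(A_t^{-1}E_jA_t^{-1}E_j\bigr)\,dt,
\]
where $E_j$ runs over an orthonormal basis of $F_{A_t}$. The martingale term has zero mean.

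On $\mathcal G_t$ we have $A_t^{-1}=a_0^{-1}\bigl(\operatorname{Id}+O(\sqrt{dt})\bigr)$ in operator norm. This makes the Itô drift at least
\[
\tfrac12\,a_0^{-2}\dim F_{A_t}\bigl(1-O(\sqrt{dT})\bigr).
\]
Here $a_0^{-2}=1-O(1/d)$ produces the $qT/d$ error, and the $O(\sqrt{dT})$ correction produces the $qT\sqrt{dT}$ error. On $\mathcal G_t^c$, the drift is bounded by $q$ times an operator-norm bound; controlling this requires $dT\le c$ so that $A_t$ stays comparable to the identity. After multiplying by $e^{-2d}$, this gives the $qTe^{-2d}$ term.

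Inserting $\dim F_{A_t}\ge q-\sum 1/M_{\operatorname{supp}(x)}$ and integrating from $0$ to $T$ gives \eqref{eq:stochastic-det}. The constant $\log\det A_0=O(1)$ is absorbed into the $O(1)$ term.

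I expect the main obstacle to be the bad-event bookkeeping. Specifically, this means controlling the drift and the contact count on $\mathcal G_t^c$ without an a priori bound on $A_t^{-1}$. I would import the corresponding stopping-time argument from \cite{AbuyaGargavaZhao} essentially verbatim; the orbit weights enter only through the first, purely linear-algebraic, step.
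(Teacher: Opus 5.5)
Your proposal follows the paper's proof essentially step for step. It uses the orbit-counting argument for $\dim F_{A_t}$. It splits over $\mathcal G_t$, using the contact-probability bound of \cref{lem:stochastic-prelim} on $\mathcal G_t$ and the $O(2^d)$ bound on the number of contact points on $\mathcal G_t^c$. It then applies the Abuya--Gargava--Zhao It\^o computation for $\log\det A_T$ and substitutes the dimension bound.

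The point you flag as the main obstacle is not actually one. The It\^o drift $-\tfrac12\sum_j\operatorname{Tr}(A_t^{-1}E_jA_t^{-1}E_j)$ is nonpositive, so on $\mathcal G_t^c$ you can simply discard it. No control of $A_t^{-1}$ is needed there, and none is available. The only cost is $\mathbb E_{\mathrm{Br}}[\mathbf 1_{\mathcal G_t^c}\dim F_{A_t}]\le qe^{-2d}$, which integrates to the $qTe^{-2d}$ term. The hypothesis $dT\le c$ is used only on $\mathcal G_t$.
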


\begin{proof}
    The dimension bound is an analogue of
    \cite[Lemma~3.3]{AbuyaGargavaZhao} that allows different orbit sizes. Since every $B\in\mathcal Q$ is $\tau$-invariant, we have $\langle B\tau x,\tau x\rangle=\langle Bx,x\rangle.$ In particular, if $x\in\partial E_{A_t}\cap\Lambda$, then
    $\tau x\in\Lambda$ and
    $$
    \langle A_t\tau x,\tau x\rangle
    =
    \langle A_tx,x\rangle
    =
    1.
    $$
    Therefore the contact set $\partial E_{A_t}\cap\Lambda$  is a union of complete $\tau$-orbits and hence 
    $$
    \left|\{\text{contact orbits}\}\right|
    =
    \sum_{x\in\partial E_{A_t}\cap\Lambda}\frac1{M_{\operatorname{supp}(x)}}.
    $$
    Each contact orbit imposes at most one independent linear condition on
    $F_{A_t}$. Hence
    $$
    \dim F_{A_t}
    \ge
    q-
    \sum_{x\in\partial E_{A_t}\cap\Lambda}\frac1{M_{\operatorname{supp}(x)}}.
    $$

    The contact estimate is proved as in
    \cite[Lemma~3.7]{AbuyaGargavaZhao}, with the constant orbit weight
    $1/m$ there replaced by $1/M_{\operatorname{supp}(x)}$.  \cref{lem:stochastic-prelim} gives
    $$
    \begin{aligned}
        \mathbb E_{\mathrm{Br}}\left[
            \mathbf1_{\mathcal G_t}
            \sum_{x\in\partial E_{A_t}\cap\Lambda}\frac1{M_{\operatorname{supp}(x)}}
        \right]
        \le
        2
        \sum_{0\ne x\in\Lambda}
        \frac1{M_{\operatorname{supp}(x)}}
        \mathbf1_{\{1-1/d\le\|x\|\le\rho_t\}}
        \Psi\left(
            \frac{a_0-\|x\|^{-2}}
            {\beta(x)\sqrt t}
        \right)
        =
        2K_t(\Lambda).
    \end{aligned}
    $$
    On $\mathcal G_t^c$, the boundary bound
    \cite[Lemma~3.1]{AbuyaGargavaZhao},
    $
    \left|\partial E_{A_t}\cap\Lambda\right|
    \le 2(2^d-1),
    $
    together with
    $\Pr_{\mathrm{Br}}(\mathcal G_t^c)\le e^{-2d}$,
    gives a contribution $e^{-cd}$.   Hence
    $$
    \mathbb E_{\mathrm{Br}}
    \sum_{x\in\partial E_{A_t}\cap\Lambda}\frac1{M_{\operatorname{supp}(x)}}
    \le
    2K_t(\Lambda)+e^{-cd}.
    $$

    Finally, we follow the proof of
    \cite[Proposition~3.5]{AbuyaGargavaZhao}.  In our notation, their
    It\^o calculation gives
    $$
    \mathbb E_{\mathrm{Br}}\log\det A_T
    \le
    -\frac12
    \int_0^T
    \mathbb E_{\mathrm{Br}}\dim F_{A_t}\,dt
    +
    O\left(
        1+\frac{qT}{d}
        +qT\sqrt{dT}
        +qTe^{-2d}
    \right),
    $$
    whenever $dT\le c$.  Substituting the dimension bound above yields
    $$
    \begin{aligned}
        \mathbb E_{\mathrm{Br}}\log\det A_T
        \le{}&
        -\frac{qT}{2}
        +
        \frac12
        \int_0^T
        \mathbb E_{\mathrm{Br}}
        \sum_{x\in\partial E_{A_t}\cap\Lambda}
        \frac1{M_{\operatorname{supp}(x)}}\,dt
        \\
             &+
             O\left(
                 1+\frac{qT}{d}
                 +qT\sqrt{dT}
                 +qTe^{-2d}
             \right),
    \end{aligned}
    $$
    as claimed.
\end{proof}

\subsection{Bounding the expected number of contact orbits}
\label{subsec:contact}

We now estimate $K_t$ on average over the normalized Haar ensemble and choose
the terminal time so that $\int_0^T\mathbb E_g K_t\,dt$ remains bounded.

Let $\Lambda_g^*$ denote the normalized Haar ensemble constructed in
\cref{subsec:haar-averaging,subsec:supportwise-mean}, with $g\in X_L$
distributed according to Haar probability measure.
Every $\Lambda_g^*$ has covolume $v_d$ and is
invariant under $\tau$. Set
\begin{equation}
    \gamma
    =
    \frac{d(2q+d)}{8(d+2)}.
    \label{eq:gamma}
\end{equation}
Since $q\asymp dr$ and $r\to\infty$ by admissibility, we have $d=o(q)$. Hence $
\gamma
\asymp 2q + d \asymp q.
$
Write
$$
K_t(\Lambda_g^*)
=
K_t^{\mathrm{full}}(\Lambda_g^*)
+
K_t^{\mathrm{proper}}(\Lambda_g^*)
$$
according to whether the lattice vector has full or proper cyclotomic
support. Let  $K_0>0$ be a constant, to be chosen sufficiently large
below, and define the terminal time
\begin{equation}
    T
    =
    \gamma^{-1}
    \log\left(\frac{M\gamma}{K_0}\right).
    \label{eq:T}
\end{equation}
For all sufficiently large $d$, this is positive.  Moreover, by the
admissibility conditions, 
$$
\gamma T=O(\log d),
\qquad
dT=O\left(\frac{\log d}{r}\right)=o(1).
$$

\begin{lemma}[Contribution from vectors of full support]
    \label{lem:full-contact}
    There exists a constant $K>0$ such that, for every
    $0<t\le T$ we have 
    $$
    \mathbb E_g K_t^{\mathrm{full}}(\Lambda_g^*)
    \le
    \frac{K}{M}e^{\gamma t}.
    $$
\end{lemma}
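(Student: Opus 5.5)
For full-support vectors the orbit weight is exactly $1/M$ by \cref{lem:orbit}, so
$$
K_t^{\mathrm{full}}(\Lambda_g^*)=\frac1M\sum_{x\in\Lambda_g^*,\ \operatorname{supp}(x)=\mathcal N}\mathbf1_{\{1-1/d\le\|x\|\le\rho_t\}}\Psi\left(\frac{a_0-\|x\|^{-2}}{\beta(x)\sqrt t}\right).
$$
We apply the supportwise mean-value formula \cref{lem:mean} with $S=\mathcal N$. Here $\covol(\Lambda_g^*)=v_d$, and $\lambda T_y$ is a fixed determinant-$\lambda^d$ map, so after the change of variables the Haar expectation equals $v_d^{-1}\int_{\mathbb R^d}\mathbf1_{\{1-1/d\le\|x\|\le\rho_t\}}\Psi(\cdots)\,dx$. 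The point is that the integrand is rotation-invariant up to the angular factor $\beta(x)=\|\Pi(\omega\otimes\omega)\|_{\mathrm{HS}}$ with $\omega=x/\|x\|$. So we pass to polar coordinates $x=R\omega$, which gives
$$
\mathbb E_gK_t^{\mathrm{full}}\le\frac{d}{M}\int_{1-1/d}^{\rho_t}R^{d-1}\,\mathbb E_\omega\Psi\left(\frac{a_0-R^{-2}}{\beta(\omega)\sqrt t}\right)dR .
$$

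Next we bound the Gaussian tail. Use $\Psi(s)\le e^{-s^2/2}$ for $s\ge0$; for $s<0$ bound $\Psi\le1$, which happens only when $R>a_0^{-1/2}=1-1/d$, a range of length absorbed below. Writing $h=a_0-R^{-2}$, we have $h\asymp (R-(1-1/d))$ near $R\approx1$. We then need
$$
\mathbb E_\omega\exp\bigl(-h^2/(2t\beta^2)\bigr).
$$
This is the delicate step, because we need a lower-tail (small $\beta$ is bad) control, whereas \cref{lem:angular} controls large $\beta^2$. I would instead follow Klartag/AGZ and substitute $u=h$ first, integrating over $R$ before $\omega$. For fixed $\omega$, $\int_0^\infty R^{d-1}e^{-h^2/(2t\beta^2)}dR$ is bounded using $R^{d-1}\le e^{(d-1)(R-1)}$ and $h\approx2(R-1)+O(1/d)$. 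Completing the square gives a bound of order $\sqrt{t}\beta\, \exp\bigl(c\,d^2t\beta^2\bigr)$ with $c=1/8$ (since $(d-1)u - 2u^2/(t\beta^2)$ is maximised with value $d^2t\beta^2/8$). After multiplying by $d$, the prefactor $d\sqrt t\beta=O(\sqrt{dT})\cdot O(1)$ is bounded, because $\beta^2\lesssim1$ and $dT=o(1)$; one may simply bound it by a constant.

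It remains to bound $\mathbb E_\omega\exp\bigl(\tfrac{d^2t}{8}\beta(\omega)^2\bigr)$. Apply \cref{lem:angular} with
$$
\lambda\frac{d(d+2)}{2q+d}=\frac{d^2t}{8},\qquad\text{i.e.}\qquad \lambda=\frac{d(2q+d)t}{8(d+2)}=\gamma t,
$$
which yields $\le Ke^{\gamma t}$ exactly by the definition \eqref{eq:gamma} of $\gamma$. We must check the hypothesis $\gamma t\le c\sqrt{(2q+d)/d}$. Since $\gamma t\le\gamma T=O(\log d)$ while $\sqrt{(2q+d)/d}\asymp\sqrt r$, this follows from \eqref{eq:R-stochastic-error}, which gives $\log d=o(\sqrt r)$. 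The factors $e^{O(1)}$ from replacing $R^{d-1}$, from $a_0=1+2/d+\dots$, and from the upper cutoff $\rho_t$ (where $|\log\rho_t|=O(\sqrt{dt}+1/d)$ and $\rho_t^d$ could contribute) need tracking. The main obstacle I expect is precisely this bookkeeping of the shift $h$ versus $R-1$ together with the region $R\le1$. There, $\Psi$ may be close to $1/2$ but the $R$-range has length $O(1/d)$, contributing $O(1)\cdot\frac1M$. All of this is absorbed in $K$, giving $\mathbb E_gK_t^{\mathrm{full}}\le \frac KMe^{\gamma t}$.
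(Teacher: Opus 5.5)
Your outline follows the paper: orbit weight $1/M$, the mean-value formula with $S=\mathcal N$, polar coordinates, a radial bound of the form $J_t(\omega)\lesssim\exp\bigl(\frac{d^2t}{8}\beta(\omega)^2\bigr)$, then \cref{lem:angular} with $\lambda=\gamma t$. The gap is in the radial step. Bounding $\Psi(s)\le e^{-s^2/2}$ and completing the square leaves the prefactor $d\sqrt t\,\beta(\omega)$. Your claim that this is $O(\sqrt{dT})$ drops a factor $\sqrt d$: using only $\beta\le1$ you get $d\sqrt T=\sqrt d\,\sqrt{dT}\asymp\sqrt{C\log d}$.

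In fact $d\sqrt t\,\beta=\sqrt{8x}$, where $x=\frac{d^2t}{8}\beta^2=\gamma t\,F(\omega)^2$ is exactly the exponent you feed into \cref{lem:angular}. For typical $\omega$ and $t$ near $T$ it is $\asymp\sqrt{\gamma T}\asymp\sqrt{\log d}$. This is not a bookkeeping artifact: $d\int_0^\infty e^{-u^2/(2\sigma^2)}e^{du}\,du=\sqrt{2\pi}\,d\sigma\,e^{d^2\sigma^2/2}\Psi(-d\sigma)$ exactly. So your route gives at best $\mathbb E_gK_t^{\mathrm{full}}\ll M^{-1}\sqrt{1+\gamma t}\,e^{\gamma t}$.

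That loss would be fatal downstream. The bound on $\int_0^T\mathbb E_gK_t\,dt$ becomes $\frac{K}{K_0}\sqrt{1+\gamma T}\asymp\sqrt{\log d}/K_0$. Shrinking $T$ to compensate costs a factor $\sqrt{\log d}$ in the ellipsoid volume, which swamps the $\log\log d$ gain.

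The missing idea is that $e^{-s^2/2}$ overestimates $\Psi(s)$ by the Mills-ratio factor $\asymp s$ at the saddle point, and that factor is exactly your prefactor. Integrating by parts instead gives, for $\sigma>0$,
\[
d\int_0^\infty\Psi(u/\sigma)\,e^{du}\,du=e^{d^2\sigma^2/2}\,\Psi(-d\sigma)-\tfrac12\le e^{d^2\sigma^2/2},
\]
with no prefactor. To apply it, keep the cutoff $\rho_t$: the integral over $R\in[1-1/d,\infty)$ diverges, because $a_0-R^{-2}\le a_0$ stays bounded and $\Psi$ does not decay. With $r_0=1-1/d$, use $R^{d-1}\le r_0^{d-1}e^{d(R-r_0)}$ (note $(d-1)/r_0=d$). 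Also use $a_0-R^{-2}\ge 2(R-r_0)(1-C_{\mathrm{op}}\sqrt{dt})$ on $[r_0,\rho_t]$. Together these give
\[
J_t(\omega)\le\exp\bigl((1+K_1\sqrt{dt})\tfrac{d^2t}{8}\beta(\omega)^2\bigr),
\]
which is the radial estimate the paper imports from \cite[Lemma~4.1]{AbuyaGargavaZhao}.

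The correction $K_1\sqrt{dt}$ multiplies the unbounded quantity $F(\omega)^2$, so it cannot simply be absorbed into $K$. It has to go into $\lambda_t=(1+K_1\sqrt{dt})\gamma t$ in \cref{lem:angular}, after which $\lambda_t-\gamma t\le K_1\gamma T\sqrt{dT}=o(1)$. Separately, on $[1-1/d,\rho_t]$ the argument of $\Psi$ is never negative; it is negative only for $R<1-1/d$, not $R>1-1/d$.
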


\begin{proof}
    Every vector of full support has orbit size $M$.  The Haar
    mean-value formula for vectors of prescribed support and the normalization
    $\covol(\Lambda_g^*)=v_d$ give  
    \begin{align*}
        \mathbb E_g
        K_t^{\mathrm{full}}(\Lambda_g^*)
        =
        \frac1{Mv_d}
        \int_{\mathbb R^d}
        \mathbf1_{\{
                1-1/d\le\|x\|\le\rho_t
        \}}
        \Psi\left(
            \frac{
                a_0-\|x\|^{-2}
            }{
                \beta(x)\sqrt t
            }
        \right)\,dx 
        = \frac1M
        \int_{S^{d-1}}
        J_t(\omega)\,d\sigma(\omega),
    \end{align*}
    where $d\sigma$ denotes the probability measure on $S^{d-1}$ and 
    $$
    J_t(\omega)
    =
    d
    \int_{1-1/d}^{\rho_t}
    \Psi\left(
        \frac{
            a_0-r^{-2}
        }{
            \beta(\omega)\sqrt t
        }
    \right)
    r^{d-1}\,dr.
    $$
    The radial estimate from \cite[Lemma~4.1]{AbuyaGargavaZhao} implies that
    $$
    J_t(\omega)\le
    \exp\left((1+K_1\sqrt{dt})\frac{d^2t}{8}\beta(\omega)^2\right)
    $$
    for an absolute constant $K_1>0$. Set
    $$
    F(\omega)^2
    =
    \frac{d(d+2)}{2q+d}\,\beta(\omega)^2.
    $$
    Then
    $$
    \frac{d^2t}{8}\beta(\omega)^2
    =
    \gamma t F(\omega)^2.
    $$   
    Thus, with
    $
    \lambda_t
    =
    (1+K_1\sqrt{dt})\gamma t,
    $
    we have
    $
    J_t(\omega)\le e^{\lambda_tF(\omega)^2}.
    $
    By admissibility and the estimates preceding the lemma,
    $$
    \lambda_t=O(\log d),
    \qquad
    \sqrt{\frac{2q+d}{d}}\asymp\sqrt r,
    $$
    and $\log d=o(\sqrt r)$.  Hence $\lambda_t$ lies in the range of
    \cref{lem:angular}, uniformly for $0<t\le T$.  Therefore
    $$
    \int_{S^{d-1}}J_t(\omega)\,d\sigma(\omega)
    \le
    K_2e^{\lambda_t}.
    $$
    Moreover, by
    $
    \lambda_t-\gamma t
    \le
    K_1\gamma T\sqrt{dT}
    =
    o(1),
    $
    we obtain 
    $$
    \int_{S^{d-1}}J_t(\omega)\,d\sigma(\omega)
    \le
    Ke^{\gamma t}.
    $$
    This proves the result.
\end{proof}

\begin{lemma}[Contribution from vectors of proper support]
    \label{lem:proper-contact}
    There exists a constant $c>0$ such that for every
    $0<t\le T$ we have 
    $$
    \mathbb E_g
    K_t^{\mathrm{proper}}(\Lambda_g^*)
    \le
    e^{-cr}.
    $$
\end{lemma}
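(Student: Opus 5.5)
The plan is to discard almost all of the structure in $K_t^{\mathrm{proper}}$ and reduce it to \cref{cor:proper-supports}. In the definition \eqref{eq:Kt}, each summand has three factors. The orbit weight satisfies $1/M_{\operatorname{supp}(x)}\le1$. The Gaussian tail satisfies $\Psi(\cdot)\le1$, since $\Psi$ is a probability. The indicator restricts to $\|x\|\le\rho_t$. Hence, pointwise in $g$,
$$
K_t^{\mathrm{proper}}(\Lambda_g^*)
\le
\#\bigl\{0\ne x\in\Lambda_g^*\cap B_{\rho_t}:\operatorname{supp}(x)\subsetneq\mathcal N\bigr\}.
$$
A vector of proper support is exactly one that vanishes in at least one cyclotomic component. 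So once $\rho=\rho_t$ is shown to lie in the range \eqref{eq:rho-range}, \cref{cor:proper-supports} gives $\mathbb E_g K_t^{\mathrm{proper}}(\Lambda_g^*)\le e^{-cr}$ directly. Enlarging the radius only enlarges the count, so it suffices to check the largest radius $\rho_T$, since $\rho_t$ is increasing in $t$.

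The one real step is to verify $|\log\rho_t|=O\bigl(d^{-1}+\sqrt{\log d/r}\bigr)$ uniformly for $0<t\le T$. Write $\rho_t^{-2}=a_0-C_{\mathrm{op}}\sqrt{dt}$ with $a_0=(1-1/d)^{-2}=1+O(1/d)$. It then suffices to bound $\sqrt{dt}\le\sqrt{dT}$.

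From \eqref{eq:T} we have $T=\gamma^{-1}\log(M\gamma/K_0)$. We also have $\gamma\asymp q\asymp dr$, and $\log(M\gamma)=O(\log d)$ by \eqref{eq:logMq}. Together these give $dT=O(\log d/r)$, so $\sqrt{dT}=O(\sqrt{\log d/r})$, which tends to $0$ by admissibility. In particular the denominator $a_0-C_{\mathrm{op}}\sqrt{dt}$ stays positive for large $d$, so $\rho_t$ is well defined. Taking logarithms gives
$$
|\log\rho_t|=\tfrac12\bigl|\log\bigl(1+O(1/d)-O(\sqrt{\log d/r})\bigr)\bigr|=O\bigl(d^{-1}+\sqrt{\log d/r}\bigr).
$$
This is exactly \eqref{eq:rho-range}, with implied constants independent of $t$.

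There is no serious obstacle here. The only care needed is to confirm that the constant in \eqref{eq:rho-range} can be taken uniform in $t\le T$, which holds because the bound above depends only on $T$. It is also worth noting that the lower cutoff $1-1/d\le\|x\|$ and the angular factor $\beta(x)$ play no role. All the work was already done in \cref{lem:fixed-support} and \cref{cor:proper-supports}, via the admissibility conditions $\mathscr A(S)\le-\delta$ and $r\gg C^3\log d$. The constant $c>0$ can be taken to be the one from \cref{cor:proper-supports}.
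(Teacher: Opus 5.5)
Your proposal is correct and follows the paper's proof essentially verbatim. You bound the orbit weight and $\Psi$ by $1$, reduce to counting proper-support vectors in $B_{\rho_t}$, and apply \cref{cor:proper-supports} after checking that $\rho_t$ lies in the range \eqref{eq:rho-range}, which follows from $dT=O(\log d/r)$.
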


\begin{proof}
    Uniformly for $0<t\le T$ we have 
    \begin{equation*}
        \log\rho_t
        =
        O\left(
            d^{-1}
            +
            \sqrt{\frac{\log d}{r}}
        \right).
        \label{eq:rho-stochastic}
    \end{equation*}
    The estimate for vectors of proper support in \cref{cor:proper-supports} is therefore
    applicable. Since $ 1 / M_{\operatorname{supp}(x)} \le1$ and $0\le\Psi(\cdot)\le1,$ their contribution to $K_t$ is bounded by the number of lattice vectors of proper support in the ball $B_{\rho_t}$. The claimed bound therefore follows
    from \cref{cor:proper-supports}. 
\end{proof}

Combining the estimates for vectors of full and proper support from
\cref{lem:full-contact,lem:proper-contact}, we obtain
\begin{equation*}
    \mathbb E_g K_t(\Lambda_g^*)
    \le
    \frac{K}{M}e^{\gamma t}
    +
    e^{-cr},
    \qquad
    0<t\le T.
    \label{eq:Kt-average}
\end{equation*}
Since the integrand is nonnegative, Tonelli's theorem gives
$$
\begin{aligned}
    \mathbb E_g
    \int_0^T
    K_t(\Lambda_g^*)\,dt
    =
    \int_0^T
    \mathbb E_g
    K_t(\Lambda_g^*)\,dt
    \le
    \int_0^T
    \left(
        \frac{K}{M}e^{\gamma t}
        +e^{-cr}
    \right)\,dt.
\end{aligned}
$$
The term from vectors of full support satisfies
$$
\int_0^T
\frac{K}{M}e^{\gamma t}\,dt
\le
\frac{K}{M\gamma}e^{\gamma T}
=
\frac{K}{K_0},
$$
while
$
Te^{-cr}=o(1).
$
Choosing $K_0$ sufficiently large therefore gives
\begin{equation}
    \mathbb E_g
    \int_0^T
    K_t(\Lambda_g^*)\,dt
    <
    \frac1{100}
    \label{eq:K-integral}
\end{equation}
for all sufficiently large $d$.

\subsection{Selection of a good lattice}
\label{subsec:stochastic-selection}

We now select a single lattice satisfying the two properties required by
the stochastic argument. The initial ball contains no nonzero lattice
points, and the integral of $K_t$ is small. 

For a lattice $\Lambda$ in the normalized Haar ensemble, let 
$$
N_0(\Lambda)
=
\left|\left(
    (\Lambda\setminus\{0\})
    \cap B_{1-1/d}
\right)\right|
$$
be the number of nonzero lattice points in the initial ball. Decompose
$
N_0
=
N_0^{\mathrm{full}}
+
N_0^{\mathrm{proper}}
$
according to cyclotomic support. For vectors of full support, applying the mean-value formula for vectors of this support
\eqref{eq:supportwise-siegel} with
$S=\mathcal N$ and using
$\covol(\Lambda_g^*)=v_d$, we obtain
$$
\mathbb E_g
N_0^{\mathrm{full}}(\Lambda_g^*)
=
\frac{\vol(B_{1-1/d})}{v_d}
=
\left(1-\frac1d\right)^d.
$$
For vectors of proper support, \cref{cor:proper-supports} gives
$
\mathbb E_g
N_0^{\mathrm{proper}}(\Lambda_g^*)
\le
e^{-cr}.
$
Hence
\begin{equation}
    \mathbb E_g
    N_0(\Lambda_g^*)
    \le
    \left(1-\frac1d\right)^d
    +e^{-cr}
    =
    e^{-1}+o(1).
    \label{eq:inner-ball-average}
\end{equation}
By \eqref{eq:K-integral} and Markov's inequality we get 
$$
\Pr_g\left\{
    \int_0^T
    K_t(\Lambda_g^*)\,dt
    >
    \frac1{10}
\right\}
<
\frac1{10}.
$$
Similarly,
$$
\Pr_g\{
    N_0(\Lambda_g^*)\ge1
\}
\le
\mathbb E_g
N_0(\Lambda_g^*)
\le
e^{-1}+o(1).
$$
Since
$
\frac1{10}+e^{-1}<1,
$
for all sufficiently large $d$ the two exceptional events have total
Haar probability strictly less than one.  Hence there exists
$g\in X_L$ such that the lattice
$
\Lambda:=\Lambda_g^*
$
has covolume $v_d$, is $\tau$-invariant, and satisfies
\begin{equation}
    B_{1-1/d}\cap\Lambda=\{0\},
    \qquad
    \int_0^T K_t(\Lambda)\,dt
    \le
    \frac1{10}.
    \label{eq:selected-K}
\end{equation} 

\subsection{The general packing bound}
\label{subsec:general-packing-bound}

We now apply the stochastic estimate with weights $1/M_{\operatorname{supp}(x)}$ to the lattice
selected above. The small value of $\int_0^T K_t(\Lambda)\,dt$ leaves essentially all
$q$ invariant deformation directions available, allowing the stochastic
deformation to produce a large lattice-free ellipsoid. 

\begin{proposition}
    \label{prop:packing}
    For every sufficiently large $d$ for which admissible gluing data exist,
    there is a lattice packing in $\mathbb R^d$ with density
    $$
    \Delta_d^L
    \gg
    d^2\frac{M}{C}\,2^{-d}.
    $$

\end{proposition}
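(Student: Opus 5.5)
We take the lattice $\Lambda=\Lambda_g^*$ selected in \cref{subsec:stochastic-selection}. It is $\tau$-invariant, has covolume $v_d$, and satisfies \eqref{eq:selected-K}. We then run the process of \cref{prop:stochastic} up to the terminal time $T$ from \eqref{eq:T}. Since $A_T$ stays positive definite and lattice-free, $E_{A_T}$ is an open ellipsoid with $E_{A_T}\cap\Lambda=\{0\}$. So $\tfrac12 E_{A_T}$ gives a lattice packing of translates of a convex body. Applying the linear map $A_T^{1/2}$, which sends $E_{A_T}$ to the unit ball, turns it into a lattice sphere packing with density
$$\frac{\vol(E_{A_T})}{2^d\covol(\Lambda)}=2^{-d}(\det A_T)^{-1/2}.$$
The task is therefore to find a realization with $-\tfrac12\log\det A_T\ge \log(d^2M/C)-O(1)$.

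First, combine the two bounds of \cref{prop:stochastic}: the orbit-weighted contact count and the $\log\det$ estimate \eqref{eq:stochastic-det}. By Tonelli and \eqref{eq:selected-K}, the contact integral is at most $2\cdot\tfrac1{10}+Te^{-cd}=O(1)$. Next, check the error terms using $\gamma\asymp q$, $\gamma T=O(\log d)$ and $dT=o(1)$:
\begin{itemize}
\item $qT/d=O(\log d/d)$;
\item $qT\sqrt{dT}=O\bigl(\log d\,\sqrt{\log d/r}\bigr)=o(1)$, by \eqref{eq:R-stochastic-error};
\item $qTe^{-2d}=o(1)$.
\end{itemize}
This gives
$$\mathbb E_{\mathrm{Br}}\log\det A_T\le -\frac{qT}{2}+O(1).$$
Hence some Brownian realization satisfies $\log\det A_T\le -qT/2+O(1)$.

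The key step is to evaluate $qT$. From \eqref{eq:gamma}, $\gamma=\frac{d(2q+d)}{8(d+2)}$, so
$$\frac{q}{\gamma}=\frac{8q(d+2)}{d(2q+d)}=4\Bigl(1+O\bigl(\tfrac1d\bigr)\Bigr)\Bigl(1+O\bigl(\tfrac dq\bigr)\Bigr).$$
Since $\log(M\gamma/K_0)=O(\log d)$, we get
$$qT=4\log\frac{M\gamma}{K_0}+O\Bigl(\bigl(\tfrac1d+\tfrac dq\bigr)\log d\Bigr)=4\log\frac{M\gamma}{K_0}+o(1),$$
using $d/q\asymp 1/r$ and $\log d/r\to0$. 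Therefore
$$-\tfrac12\log\det A_T\ge \log(M\gamma)-O(1).$$

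Finally, $\gamma\asymp q\asymp Cr^2\asymp d^2/C$, so $M\gamma\asymp d^2M/C$. The density is then at least
$$2^{-d}e^{\log(M\gamma)-O(1)}\gg d^2\frac MC\,2^{-d},$$
which proves \cref{prop:packing}.

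The main obstacle is bookkeeping the constant in $q/\gamma$. The whole $d^2$ gain depends on the factor $1/2$ in $-qT/2$ pairing with $q/\gamma\to4$ to give exactly $(M\gamma)^{2}$ inside $\det A_T^{-1}$, and on confirming that every error term is truly $O(1)$ under the admissibility conditions.
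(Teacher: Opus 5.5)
Your proposal is correct and follows the paper's proof essentially step for step. You bound the orbit-weighted contact integral by $O(1)$ using the selected lattice, you check that the three error terms in \eqref{eq:stochastic-det} are $o(1)$, and you extract a realization with $\log\det A_T\le -qT/2+O(1)$. You then use $q/(4\gamma)=1+O(1/r)$ together with $\log(M\gamma)=O(\log d)=o(r)$ to get $(\det A_T)^{-1/2}\gg M\gamma$, and conclude via $\gamma\asymp q\asymp d^2/C$.
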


\begin{proof}
    Apply \cref{prop:stochastic} to the lattice $\Lambda$ selected in 
    \cref{subsec:stochastic-selection}. Using \eqref{eq:selected-K} gives 
    $$
    \int_0^T
    \mathbb E_{\mathrm{Br}}
    \sum_{x\in\partial E_{A_t}\cap\Lambda}
    \frac1{M_{\operatorname{supp}(x)}}\,dt
    =
    O(1).
    $$
    Note that 
    $
    \frac{qT}{d}=o(1),
    $
    and, by \eqref{eq:R-stochastic-error},
    $$
    qT\sqrt{dT}
    =
    O\left(
        \frac{(\log d)^{3/2}}{\sqrt r}
    \right)
    =
    o(1).
    $$
    The remaining error term $  qTe^{-2d}$ in \eqref{eq:stochastic-det} 
    is also negligible.  Therefore \eqref{eq:stochastic-det} becomes 
    \begin{equation*}
        \mathbb E_{\mathrm{Br}}\log\det A_T
        \le
        -\frac{qT}{2}
        +O(1).
        \label{eq:det-final}
    \end{equation*}
    Hence there exists a realization for which the final ellipsoid
    $
    E=E_{A_T}
    $
    is lattice-free and
    $$
    \log\det A_T
    \le
    -\frac{qT}{2}+O(1).
    $$
    For this realization,
    $$
    \frac{\vol(E_{A_T})}{v_d}
    =
    (\det A_T)^{-1/2}
    \gg
    \exp\left(\frac{qT}{4}\right) =     \left(
        \frac{M\gamma}{K_0}
    \right)^{q/(4\gamma)}.
    $$
    Note that 
    $$
    \frac{q}{4\gamma}
    =
    \frac{
        2q(d+2)
    }{
        d(2q+d)
    }
    =
    1-O(r^{-1}),
    $$
    where we used $q\asymp dr$.  Moreover,
    $
    \log(M\gamma)
    =
    O(\log d)
    =
    o(r),
    $
    where the last estimate follows from the admissibility assumption $r \gg (\log d)^4 $. It follows that
    $
    (M\gamma/K_0)^{q/(4\gamma)}
    \gg
    M\gamma.
    $
    Since
    $
    \gamma\asymp q,
    $
    we conclude that
    \begin{equation}
        \vol(E)
        \gg
        Mq\,v_d.
        \label{eq:ellipsoid-volume}
    \end{equation}
    Finally, \eqref{eq:q} gives
    $
    q\gg Cr^2.
    $
    Since
    $
    d=Cr+a
    \le
    C(r+1)
    \le
    2Cr
    $
    for all sufficiently large $d$, we have 
    $
    Cr^2
    \gg
    \frac{d^2}{C}.
    $
    Consequently
    \begin{equation}
        Mq
        \gg
        d^2\frac{M}{C}.
        \label{eq:Mq-final}
    \end{equation}
    Because $E$ is lattice-free, the translates of $E/2$ by $\Lambda$
    have disjoint interiors.  Their packing density is
    $$
    \frac{
        \vol(E)
    }{
        2^d\covol(\Lambda)
    }
    =
    \frac{
        \vol(E)
    }{
        2^dv_d
    }.
    $$
    Using \eqref{eq:ellipsoid-volume} and \eqref{eq:Mq-final}, we obtain
    $
    \Delta_d^L
    \gg
    d^2\frac{M}{C}\,2^{-d}.
    $
    A linear change of variables sends the ellipsoid to a Euclidean ball while
    preserving packing density.  This proves the proposition.
\end{proof}

\subsection{Proof of the main theorem}
\label{subsec:proof-main}

We apply Proposition~\ref{prop:packing} to the admissible data supplied by
Theorem~\ref{thm:arithmetic-realization}. Its estimate $M/C\gg\log\log d$
gives
$$
\Delta_d^L\gg d^2\log\log d\,2^{-d},
$$
which proves Theorem~\ref{thm:main}.

\section*{AI-assisted development}
With the assistance of ChatGPT, Simkin pursued the idea of combining Klartag's stochastic framework with the cyclotomic symmetry mechanism of AGZ, with the objective of extending the logarithmic gain to every dimension. This produced a proof of a weaker form of the main result of this paper. 
During the subsequent collaboration with Nihar Gargava and Matthias Gr\"obner, the AI-assisted
exploration was further developed into the stronger result proved in this paper. Nihar Gargava and Matthias Gr\"obner examined the mathematical steps, identified points requiring modification or clarification, and helped reformulate the proof into its present form. The exposition was developed collaboratively, with substantial human rewriting, reorganization, and the addition of motivations, explanations, and intermediate arguments. The authors take responsibility for the mathematical claims and proofs
presented in the paper. AI tools were used with human supervision to assist in the writing.

ChatGPT also played a role in initiating the collaboration by suggesting that Simkin contact Gargava, and later suggesting to Gargava that he contact Gröbner.

\section*{Funding}
Nihar Gargava acknowledges support from the Swiss National Science
Foundation (SNSF), grant 225437, ``Random Geometry with Arithmetic
Constraints.'' Matthias Gr\"obner acknowledges support from SNSF grant
215337, ``Sphere packing, Energy minimization and Fourier interpolation.''

\appendix
\section{A lower bound without using stochastic methods}
\label{app:deterministic-packing}

The gluing construction also gives a lower bound in every dimension of order
$d\log\log d\,2^{-d}$ using Siegel's mean-value theorem and a deterministic
stretching argument.  This appendix proves that bound using only the
arithmetic and mean-value results of
\cref{sec:gluing-framework,sec:arithmetic-realizations}.
The geometric step adapts Minkowski's lemma on successive
minima~\cite{MinkowskiGeometry1910} to the cyclic action and bounds
$\log\det T$ by $H_\rho(\Lambda)$, defined below. A
related version appears in \cite[Proposition~4.1]{GargavaSerbanStretching}.
This approach to the packing bound using successive minima is originally due to Rogers~\cite{RogersExistence1947}.

\begin{theorem}
    \label{thm:deterministic-packing}
    Let $\bigl(\mathcal N,(y_n)_{n\in\mathcal N}\bigr)$
    be any admissible family of gluing data,
    with associated parameters $C$ and $M$.
    Then, as $d\to\infty$,
    \[
        \Delta_d^L
        \ge
        (1-o(1))\frac{dM}{eC}\,2^{-d}.
    \]
    In particular, for the admissible family
    constructed in
    \cref{thm:arithmetic-realization},
    we obtain
    \[
        \Delta_d^L
        \gg
        d\log\log d\,2^{-d}
    \]
    for every sufficiently large integer $d$.
\end{theorem}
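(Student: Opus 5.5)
We follow Rogers' successive-minima approach, adapted to the cyclic symmetry. The plan is to take a lattice $\Lambda=\Lambda_g^*$ from the normalized Haar ensemble with few short vectors in a ball slightly larger than the unit ball, and then to stretch it along a Minkowski-type reduced family of orbits. The lattice should have covolume $v_d$ and, for a radius $\rho=1+\epsilon$ with $\epsilon$ small, satisfy two conditions: the expected weighted count
\[
H_\rho(\Lambda):=\sum_{0\ne x\in\Lambda\cap B_\rho}\frac{1}{M_{\operatorname{supp}(x)}}
\]
is at most roughly $\rho^d/M+e^{-cr}$, and the vectors of proper support are negligible. Both facts follow from the supportwise mean-value formula \eqref{eq:supportwise-siegel}. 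For full support $S=\mathcal N$, every vector has orbit size $M$ by \cref{lem:orbit}, so the contribution is $\rho^d v_d/(Mv_d)=\rho^d/M$. For proper supports, \cref{cor:proper-supports} gives $e^{-cr}$, provided $|\log\rho|$ lies in the range \eqref{eq:rho-range}. By Markov's inequality we then fix a single $\Lambda$ with $H_\rho(\Lambda)\le(1+o(1))\rho^d/M+e^{-cr}$.

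The geometric step is an orbit version of Minkowski's successive minima lemma. Work in $\mathcal Q$ from \eqref{eq:Qspace}, or more simply with linear maps $T$ commuting with $\tau$ and preserving the decomposition into components. Choose lattice vectors greedily by increasing norm inside $B_\rho$, keeping only those vectors $x$ whose orbit $\operatorname{Orb}(x)$ is not contained in the $\tau$-stable real span of the previously chosen orbits. Each new orbit enlarges that span by at most $M_{\operatorname{supp}(x)}$ real dimensions, and at least one. Let $V_1\subset V_2\subset\cdots$ be the resulting $\tau$-invariant flag, with minima $\mu_1\le\mu_2\le\cdots\le\rho$. Define the $\tau$-equivariant stretch $T$ that acts on $V_{k}\ominus V_{k-1}$ by $\rho/\mu_k$ and by $1$ beyond the last step. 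Arguing as in Minkowski's lemma, $T\Lambda\cap B_\rho^\circ=\{0\}$, since any short vector of $T\Lambda$ would contradict the minimality of the chosen orbits. This gives
\[
\log\det T\le\sum_k \dim(V_k/V_{k-1})\log\frac{\rho}{\mu_k}.
\]
Writing this sum as an integral over radii, $\dim V(s)$ is at most $M$ times $H_s(\Lambda)$ restricted to full-support orbits, plus the proper-support counts. This yields $\log\det T\le\int_0^\rho\frac{\dim V(s)}{s}ds\lesssim M\int_0^\rho H_s\,\frac{ds}{s}$, which is essentially the role of $H_\rho(\Lambda)$ in the paper's bound.

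Finally we optimize. Take $\rho^d=M/\text{const}$, that is $\rho=1+\frac{\log M}{d}+O(1/d)$. This is allowed by \eqref{eq:rho-range}, since $\log M\ll\log d$. With this choice $\int_0^\rho H_s\,ds/s\approx\rho^d/(dM)$, and the stretched lattice has a lattice-free ball of radius $\rho$. The density is $\det T\cdot\rho^d 2^{-d}$, up to the normalization $\covol=v_d$. Balancing $\rho^d$ against the loss $e^{M\rho^d/(dM)}$ in the style of Rogers gives an optimum at $\rho^d\approx d/\dots$ with the factor $1/e$, namely density at least $(1-o(1))\frac{dM}{eC}2^{-d}$. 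Here $M/C$ enters because each full orbit contributes $M$ vectors but, once the packing is normalized by dimension, only about $C$ real directions per orbit block. The main obstacle is making this constant exact: the Minkowski-type step must lose only the orbit dimension, which is at most $C$ real dimensions per orbit rather than $M$, because an orbit of $x\in V$ spans at most $\sum_{n\in S}c_n$ times the multiplicity. I would first check carefully that the span of a full orbit has real dimension exactly $C$, using $\tau$-invariance and the decomposition $V=\bigoplus V_n$. Then each counted orbit, of weight $1/M$, costs $C$ dimensions, which produces the ratio $M/C$ in the Rogers optimization $\max_t t e^{-t}=1/e$. The second claim then follows by substituting \cref{thm:arithmetic-realization}.
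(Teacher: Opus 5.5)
Your skeleton matches the paper: the supportwise Siegel formula, a $\tau$-equivariant Minkowski--Rogers stretch along the flag $W_i=W_{i-1}+\mathbb R[\tau]x_i$, the choice $\rho^d=dM/C$, and the factor $1/e$. As written, however, the argument has two gaps, and each one removes one of the two gains.

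\textbf{The selection statistic.} You select $\Lambda$ using the orbit count $H_\rho(\Lambda)=\sum_{0\ne x\in\Lambda\cap B_\rho}1/M_{\operatorname{supp}(x)}$ at the single radius $\rho$. But $\log\det T=\int_0^\rho\dim V(s)\,ds/s$ involves every radius below $\rho$. Your estimate $\int_0^\rho H_s\,ds/s\approx\rho^d/(dM)$ holds only after taking $\mathbb E_g$, not for the fixed $\Lambda$ you chose. At $\rho^d=dM/C$ the count at radius $\rho$ has size about $d/C\to\infty$. From that count alone you only get $\log\det T\le C\,H_\rho(\Lambda)\log(\rho/\lambda_1(\Lambda))$, which loses the factor $d$ that the radial integral is supposed to supply. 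The fix is to average and select directly on the logarithmically weighted functional
\[
\mathcal H_\rho(\Lambda):=\sum_{\varnothing\ne S\subseteq\mathcal N}\frac{c_S}{M_S}\sum_{\substack{x\in\Lambda\\ \operatorname{supp}(x)=S}}\max\Bigl\{0,\log\frac{\rho}{\|x\|}\Bigr\}.
\]
Its mean is at most $C\rho^d/(Md)+e^{-cr}/r$. This follows from \eqref{eq:supportwise-siegel}, Tonelli, \cref{cor:proper-supports}, and the identity $\max\{0,\log(\rho/\|x\|)\}=\int_0^\rho\mathbf 1_{\{\|x\|<t\}}\,dt/t$. You then take $g$ with $\mathcal H_\rho(\Lambda_g^*)\le 1+o(1)$. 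This functional is exactly the paper's $H_\rho$.

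\textbf{The dimension count.} Your second paragraph states that each orbit adds \emph{at most $M_{\operatorname{supp}(x)}$ dimensions} and that $\dim V(s)$ is \emph{at most $M$ times the orbit count}. With that bound the expected value of $\log\det T$ is $\rho^d/d$, the optimum is at $\rho^d=d$, and you recover only Rogers' bound $d\,2^{-d}/e$ with no $M/C$ gain. What is needed is $b_i\le c_{S_i}$. This holds because $\prod_{n\in S_i}\Phi_n(\tau)$ annihilates $x_i$, so $\dim_{\mathbb R}\mathbb R[\tau]x_i\le c_{S_i}$. There is no factor of the multiplicity $r$, and equality is not needed. Combine this with the disjointness of the selected orbits, each consisting of $M_{S_i}$ vectors of norm $\mu_i$. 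This gives
\[
\log\det T\le\sum_i b_i\max\{0,\log(\rho/\mu_i)\}\le\mathcal H_\rho(\Lambda).
\]
Your final paragraph identifies the $c_S$ bound, but it must replace the earlier $M_S$ bound, not sit alongside it. Two further corrections follow. First, $\rho^d$ must be $dM/C$, not $M/\mathrm{const}$. Second, the density of $T\Lambda$ is $\rho^d2^{-d}/\det T$ (using $\covol\Lambda=v_d$), not $\det T\cdot\rho^d2^{-d}$.
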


We retain the space of dimension $d$ and the orthogonal cyclic action from
\cref{subsec:exact-dimension}.  For every nonempty
$S\subseteq\mathcal N$, put
$$
c_S=\sum_{n\in S}\varphi(n),
\qquad
s_S=rc_S+a\mathbf1_{\{1\in S\}},
\qquad
M_S=\operatorname{lcm}\{n:n\in S\}.
$$
Thus $c_{\mathcal N}=C$, $s_{\mathcal N}=d$, and
$M_{\mathcal N}=M$.  Every nonzero vector $x\in V$ of support $S$
has an orbit of size $M_S$, by the same argument as in
Lemma~\ref{lem:orbit}.  Moreover,
$$
\dim_{\mathbb R}\operatorname{span}_{\mathbb R}
\{\tau^j x:j\in\mathbb Z\}\le c_S.
$$
Indeed, the polynomial $\prod_{n\in S}\Phi_n$, of degree $c_S$,
annihilates $x$ under the action of $\tau$.  These two facts account
for the weight $c_S/M_S$ in the argument below.

\subsection{Stretching subspaces spanned by cyclic orbits}

For $\rho>0$ and $x\ne0$, define
$$
h_\rho(x):=\max\left\{0,\log\frac{\rho}{\|x\|}\right\},
$$
and set $h_\rho(0)=0$.  For a full $\tau$-invariant lattice
$\Lambda\subset V$, define
\begin{equation}
    H_\rho(\Lambda)
    :=
    \sum_{\varnothing\ne S\subseteq\mathcal N}
    \frac{c_S}{M_S}
    \sum_{\substack{x\in\Lambda\\\operatorname{supp}(x)=S}}
    h_\rho(x).
    \label{eq:deterministic-log-sum}
\end{equation}
This sum is finite because only nonzero lattice vectors of length
less than $\rho$ contribute.

We use a version of Minkowski's lemma on successive
minima~\cite{MinkowskiGeometry1910} that preserves the action of
$\mathbb R[\tau]$. Here
$$
\mathbb Q[\tau]\simeq\prod_{n\in\mathcal N}K_n.
$$
For a field or a division algebra, the successive module spans have
equal dimension increments, giving the geometric mean in
\cite[Proposition~4.1]{GargavaSerbanStretching}. For the product of fields
above, the dimension increments $b_i$ can vary. The same orthogonal
stretching argument gives the following statement with exponents $b_i/d$.

\begin{lemma}[Minkowski's lemma]
    \label{lem:minkowski-stretching}
    Let $\Lambda\subset V$ be a full lattice preserved by $\tau$.
    Starting with $W_0=\{0\}$, choose a shortest vector
    $x_i\in\Lambda\setminus W_{i-1}$ and put
    $$
    \mu_i=\|x_i\|,
    \qquad
    W_i=W_{i-1}+\mathbb R[\tau]x_i.
    $$
    Continue until $W_k=V$, and set
    $$
    b_i=\dim W_i-\dim W_{i-1},
    \qquad
    \bar\mu=\prod_{i=1}^k\mu_i^{b_i/d}.
    $$
    There is a positive definite linear map $T_0$ commuting with $\tau$ such that
    $$
    \det T_0=1,
    \qquad
    \lambda_1(T_0\Lambda)=\bar\mu.
    $$
\end{lemma}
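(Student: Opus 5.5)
We prove the lemma with an explicit orthogonal stretching adapted to the filtration $W_0\subset W_1\subset\cdots\subset W_k=V$. Fix a $\tau$-invariant Euclidean structure on $V$; the action of $\tau$ is orthogonal. Each $W_i$ is an $\mathbb R[\tau]$-submodule, because it is a sum of cyclic $\mathbb R[\tau]$-modules. Hence its orthogonal complement inside $W_{i}$, namely $U_i:=W_i\cap W_{i-1}^{\perp}$, is also $\tau$-invariant: the adjoint of $\tau$ is $\tau^{-1}\in\mathbb R[\tau]$. We obtain an orthogonal decomposition $V=U_1\oplus\cdots\oplus U_k$ into $\tau$-invariant subspaces with $\dim U_i=b_i$.

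First note that $\mu_1\le\mu_2\le\cdots\le\mu_k$. Indeed, $x_{i+1}\notin W_i\supseteq W_{i-1}$, so $x_{i+1}$ was a candidate at step $i$. Define
$$
T_0:=\bigoplus_{i=1}^k\frac{\bar\mu}{\mu_i}\operatorname{Id}_{U_i}.
$$
This map is positive definite and commutes with $\tau$, since each $U_i$ is $\tau$-invariant and $T_0$ is scalar on it. Its determinant is $\prod_i(\bar\mu/\mu_i)^{b_i}=\bar\mu^{d}\prod_i\mu_i^{-b_i}=1$, by the definition of $\bar\mu$ and $\sum_ib_i=d$.

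It remains to show $\lambda_1(T_0\Lambda)=\bar\mu$, and the main point is the lower bound $\|T_0x\|\ge\bar\mu$ for every $0\ne x\in\Lambda$. Let $j$ be the least index with $x\in W_j$, so $x\notin W_{j-1}$ and hence $\|x\|\ge\mu_j$ by the choice of $x_j$. Write $x=\sum_{i\le j}x^{(i)}$ with $x^{(i)}\in U_i$. Since $\mu_i\le\mu_j$ for $i\le j$, each factor satisfies $\bar\mu/\mu_i\ge\bar\mu/\mu_j$. Therefore
$$
\|T_0x\|^2=\sum_{i\le j}\frac{\bar\mu^2}{\mu_i^2}\|x^{(i)}\|^2\ge\frac{\bar\mu^2}{\mu_j^2}\|x\|^2\ge\bar\mu^2 .
$$
For the upper bound, note that $x_1\in W_1=U_1$, so $\|T_0x_1\|=(\bar\mu/\mu_1)\mu_1=\bar\mu$. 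Together these give $\lambda_1(T_0\Lambda)=\bar\mu$.

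The one delicate step is the well-definedness of the procedure itself. We need that $x_i\in\Lambda\setminus W_{i-1}$ exists whenever $W_{i-1}\ne V$, and that a shortest such vector exists. Both follow because $\Lambda$ spans $V$ and is discrete. Termination holds because dimensions strictly increase, so $b_i\ge1$. No arithmetic input from earlier sections is needed; the only structural facts used are that $\tau$ is orthogonal and that the $W_i$ are $\mathbb R[\tau]$-stable.
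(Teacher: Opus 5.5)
Your proof is correct and follows the orthogonal stretching argument that the paper invokes via Gargava--Serban: you scale by $\bar\mu/\mu_i$ on the $\tau$-stable orthogonal pieces $W_i\cap W_{i-1}^{\perp}$, and use $\mu_1\le\cdots\le\mu_k$ to get $\lambda_1(T_0\Lambda)=\bar\mu$. Since the paper only asserts that ``the same orthogonal stretching argument'' works with varying increments $b_i$, your write-up supplies the details it leaves implicit.
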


Applying \cref{lem:minkowski-stretching} and estimating the contributions
of the cyclic orbits gives a bound on $\log\det T$ in terms of
$H_\rho(\Lambda)$.

\begin{lemma}[Stretching bound for lattices preserved by the cyclic action]
    \label{lem:deterministic-stretching}
    For every full $\tau$-invariant lattice $\Lambda\subset V$ and every
    $\rho>0$, there is a positive definite linear map $T$ commuting with
    $\tau$ such that
    $$
    \lambda_1(T\Lambda)\ge\rho,
    \qquad
    \log\det T\le H_\rho(\Lambda).
    $$
\end{lemma}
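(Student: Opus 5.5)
The plan is to combine \cref{lem:minkowski-stretching} with a rescaling, and then bound the resulting $\log\det$ by comparing it termwise with $H_\rho(\Lambda)$.

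First apply \cref{lem:minkowski-stretching} to $\Lambda$. This produces the vectors $x_1,\dots,x_k$, the minima $\mu_1\le\dots\le\mu_k$, the dimension increments $b_i$, and a map $T_0$ commuting with $\tau$ with $\det T_0=1$ and $\lambda_1(T_0\Lambda)=\bar\mu$. Now set $T:=(\rho/\bar\mu)\,T_0$ if $\bar\mu<\rho$, and $T:=\operatorname{Id}$ otherwise. Then $T$ is positive definite, commutes with $\tau$, and satisfies $\lambda_1(T\Lambda)\ge\rho$. Since $\sum_i b_i=d$, we get
$$
\log\det T=\max\Bigl\{0,\ d\log\rho-\sum_i b_i\log\mu_i\Bigr\}\le\sum_{i=1}^k b_i\,h_\rho(x_i).
$$
It therefore suffices to show $\sum_i b_i h_\rho(x_i)\le H_\rho(\Lambda)$. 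Since every term of $H_\rho$ is nonnegative, we may discard all lattice vectors other than the orbits of the $x_i$.

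The key step is to charge each $b_i h_\rho(x_i)$ to the orbit $\operatorname{Orb}(x_i)$. Let $S_i=\operatorname{supp}(x_i)$. Then:
\begin{itemize}
\item $b_i\le\dim\mathbb R[\tau]x_i\le c_{S_i}$, by the annihilating polynomial remark preceding the lemma;
\item the orbit of $x_i$ consists of $M_{S_i}$ lattice vectors, each of support $S_i$ (by \cref{lem:orbit} and the support-preservation of $\tau$);
\item $\tau$ is orthogonal, so each of these vectors has $h_\rho=h_\rho(x_i)$.
\end{itemize}
Hence the orbit contributes exactly $\frac{c_{S_i}}{M_{S_i}}\cdot M_{S_i}\,h_\rho(x_i)=c_{S_i}h_\rho(x_i)\ge b_i h_\rho(x_i)$ to $H_\rho(\Lambda)$.

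It remains to check that distinct indices $i\ne j$ give disjoint orbits, so that no vector is charged twice. If $\tau^m x_j=x_i$ with $j>i$, then $x_j\in\mathbb R[\tau]x_i\subseteq W_{j-1}$, which contradicts the choice of $x_j\notin W_{j-1}$. Summing over $i$ then gives $\sum_i b_i h_\rho(x_i)\le H_\rho(\Lambda)$, and the lemma follows.

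The main obstacle is not in this charging argument but in \cref{lem:minkowski-stretching} itself, which I take as given. Within the present proof, the only delicate points are the dimension bound $b_i\le c_{S_i}$ and the disjointness of the orbits, and both follow from the structure of the successive $\mathbb R[\tau]$-spans. A minor point is that vectors with $\|x_i\|\ge\rho$ contribute $h_\rho(x_i)=0$ on both sides, so they need no separate treatment.
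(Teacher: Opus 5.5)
\textbf{The case split breaks the first conclusion.} Your charging argument is the paper's: rescale the Minkowski map $T_0$ by $\rho/\bar\mu$, use $b_i\le c_{S_i}$, and charge $c_{S_i}h_\rho(x_i)$ to the orbit of $x_i$, which is disjoint from the other orbits. That part is correct. The problem is the choice $T=\operatorname{Id}$ when $\bar\mu\ge\rho$. Then $\lambda_1(T\Lambda)=\lambda_1(\Lambda)=\mu_1$, and in general $\mu_1\le\bar\mu$ with strict inequality. A lattice with one very short orbit and all other vectors long has $\bar\mu\ge\rho>\mu_1$. For such a lattice your $T$ does not satisfy $\lambda_1(T\Lambda)\ge\rho$. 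This is exactly the situation the stretching is meant to handle, so the step ``then $T$ satisfies $\lambda_1(T\Lambda)\ge\rho$'' fails.

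\textbf{Fix.} The case split is unnecessary; take $T=(\rho/\bar\mu)T_0$ unconditionally, as the paper does. Then $\lambda_1(T\Lambda)=\rho$, and
\[
\log\det T=\sum_{i=1}^k b_i\log\frac{\rho}{\mu_i}\le\sum_{i=1}^k b_i h_\rho(x_i),
\]
since $\log(\rho/\|x\|)\le h_\rho(x)$ holds whatever the sign. A negative value of $\log\det T$ is harmless. Alternatively, keep your split but use $T_0$ instead of $\operatorname{Id}$ when $\bar\mu\ge\rho$, since $\lambda_1(T_0\Lambda)=\bar\mu\ge\rho$ and $\log\det T_0=0$. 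With either change, the rest of your proof goes through unchanged.
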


\begin{proof}
    Take $x_i,\mu_i,W_i,b_i,\bar\mu$ and $T_0$ from
    \cref{lem:minkowski-stretching}, and set
    $
    T=\frac{\rho}{\bar\mu}T_0.
    $
    Then $T$ is positive definite, commutes with $\tau$, and satisfies
    $\lambda_1(T\Lambda)=\rho$. Since $\sum_i b_i=d$, we have
    $$
    \log\det T
    =\sum_{i=1}^k b_i\log\frac{\rho}{\mu_i}
    \le\sum_{i=1}^k b_i h_\rho(x_i).
    $$
    Put $S_i=\operatorname{supp}(x_i)$. The dimension bound for
    $\mathbb R[\tau]x_i$ gives $b_i\le c_{S_i}$. The selected cyclic orbits
    are pairwise disjoint, since $x_i\notin W_{i-1}$ and $W_{i-1}$ is
    preserved by $\tau$. Each orbit contains $M_{S_i}$ vectors, all of
    support $S_i$ and length $\mu_i$. Its contribution to
    \eqref{eq:deterministic-log-sum} is therefore $c_{S_i}h_\rho(x_i)$.
    Consequently,
    $$
    \log\det T
    \le\sum_{i=1}^k c_{S_i}h_\rho(x_i)
    \le H_\rho(\Lambda).
    $$
\end{proof}

The lemma turns a bound on a logarithmic lattice sum into a packing
bound.  More precisely, the lattice $T\Lambda$ has packing density
at least
\begin{equation}
    \frac{v_d\rho^d}{2^d\covol(\Lambda)}
    \exp\bigl(-H_\rho(\Lambda)\bigr).
    \label{eq:deterministic-density}
\end{equation}
The construction uses a flag of subspaces spanned by cyclic orbits. The dimension increase
at each step may vary with the support and with the preceding spans.

\subsection{The expected value of \texorpdfstring{$H_\rho$}{H rho}}

Use the normalized Haar ensemble from
\eqref{eq:normalized-haar-lattice},
$$
\Lambda_g^*=\lambda T_y gL,
\qquad
\covol(\Lambda_g^*)=v_d.
$$
For $t>0$ and nonempty $S\subseteq\mathcal N$, write
$$
A_S(t):=
\mathbb E_g\,
\#\{x\in\Lambda_g^*:
\operatorname{supp}(x)=S,\ \|x\|<t\}.
$$
The Siegel formula for vectors of each prescribed support in Lemma~\ref{lem:mean} gives
$$
A_S(t)=A_S(\rho)\left(\frac{t}{\rho}\right)^{s_S},
\qquad
A_{\mathcal N}(t)=t^d.
$$
Although $h_\rho$ is unbounded near the origin, the mean-value formula
extends to it by applying the formula to $\min\{h_\rho,K\}$ and
letting $K\to\infty$ by monotone convergence.  Equivalently, using
$$
h_\rho(x)=\int_0^\rho
\mathbf1_{\{\|x\|<t\}}\,\frac{dt}{t}
\qquad(x\ne0),
$$
Tonelli's theorem and the preceding scaling identity give
$$
\mathbb E_g
\sum_{\substack{x\in\Lambda_g^*\\\operatorname{supp}(x)=S}}
h_\rho(x)
=\int_0^\rho A_S(t)\,\frac{dt}{t}
=\frac{A_S(\rho)}{s_S}.
$$
It follows that
\begin{equation}
    \begin{aligned}
        \mathbb E_g H_\rho(\Lambda_g^*)
    &={}
    \frac{C\rho^d}{Md}
    +\sum_{\varnothing\ne S\subsetneq\mathcal N}
    \frac{c_S}{M_S s_S}A_S(\rho).
    \end{aligned}
    \label{eq:deterministic-log-mean}
\end{equation}

The gluing estimates control the second term.  Whenever $\rho$
satisfies \eqref{eq:rho-range}, Corollary~\ref{cor:proper-supports} gives
$$
\sum_{\varnothing\ne S\subsetneq\mathcal N}A_S(\rho)
\le e^{-cr}
$$
for some $c>0$.  Since $s_S\ge rc_S$ and $M_S\ge1$, we obtain
\begin{equation}
    \mathbb E_g H_\rho(\Lambda_g^*)
    \le\frac{C\rho^d}{Md}+\frac{e^{-cr}}r.
    \label{eq:deterministic-mean-bound}
\end{equation}
Thus vectors of proper support make a negligible contribution to
$\mathbb E_g H_\rho(\Lambda_g^*)$.

\subsection{The packing bound in every dimension}

\begin{proof}[Proof of \cref{thm:deterministic-packing}]
    Choose $\rho>0$ by
    $
    \rho^d=\frac{dM}{C}.
    $
    Admissibility gives $C\le d$, $r\le d$, and $\log M=O(\log d)$,
    so
    $$
    |\log\rho|=O\left(\frac{\log d}{d}\right)
    =O\left(d^{-1}+\sqrt{\frac{\log d}{r}}\right).
    $$
    Hence \eqref{eq:deterministic-mean-bound} applies.  Set
    $\epsilon_d=e^{-cr}/r$, which tends to zero.  There is a choice of
    $g$ for which
    $$
    H_\rho(\Lambda_g^*)\le1+\epsilon_d.
    $$
    Apply Lemma~\ref{lem:deterministic-stretching} to this lattice.
    By \eqref{eq:deterministic-density} and
    $\covol(\Lambda_g^*)=v_d$, the resulting lattice has packing density
    at least
    $$
    \frac{\rho^d}{2^d}\exp(-1-\epsilon_d)
    =e^{-\epsilon_d}\frac{dM}{eC}\,2^{-d}.
    $$
    This proves the first assertion.  Applying
    \cref{thm:arithmetic-realization}, which supplies admissible data
    with $M/C\gg\log\log d$ for every sufficiently large integer $d$,
    proves the second assertion.
\end{proof}

The factor $M/C$ comes from comparing the number of vectors in a full
cyclic orbit with the dimension of its span.  The additional factor
$d$ comes from the radial integral of the logarithmic weight.  At the
chosen radius, the expected value of $H_\rho$ is $1+o(1)$,
so a bounded increase in covolume suffices to obtain the claimed
packing density.

\begin{remark}[Direct orbit counting]
For a $\tau$-invariant ball of normalized volume $V$ (Euclidean volume
$Vv_d$), Siegel's formula gives an expected count $V$ of vectors with full support, and
this count is a multiple of $M$. The estimate for vectors of proper support then
shows that a lattice-free ball exists at $V=M-o(1)$, provided
$Me^{-cr}=o(1)$. This yields density $(M-o(1))2^{-d}$.  In the present
construction $M=d^{o(1)}$, so direct orbit counting alone does not give
$d\log\log d\,2^{-d}$.  
\end{remark}
%

\section{Mean-value formula for vectors with prescribed support}
\label{app:mean-value-gluing} 

We prove \cref{lem:mean}. We begin with the mean value theorem over a
number field. For a cyclotomic field and the standard lattice, this is
\cite[Theorem~2.1]{AbuyaGargavaZhao}. The corresponding statement for a
general number field follows from
\cite[Theorem~1.2 and the subsequent discussion]{KimAdelicRogers}.
We state the version for an arbitrary full lattice.

\begin{theorem}[Siegel mean value theorem over a number field]
Let $K$ be a number field, let $K_\infty=K\otimes_{\mathbb Q}\mathbb R$,
and let $r\ge2$. Let $\mathcal O_K$ denote the ring of integers of $K$.
Put $G_K=\operatorname{SL}_r(K_\infty)$. Let
$\Lambda\subseteq K^r$ be a full $\mathbb Z$-lattice, and set
$$
\Gamma_\Lambda
=\{\gamma\in\operatorname{SL}_r(K)\mid\gamma\Lambda=\Lambda\}.
$$
The group $\Gamma_\Lambda$ is arithmetic, so $G_K/\Gamma_\Lambda$ has
finite invariant volume. Normalize this measure to have total mass
one. For every nonnegative bounded compactly supported Borel function
$f$ on $K_\infty^r$,
\begin{equation}
\int_{G_K/\Gamma_\Lambda}
\sum_{v\in g\Lambda\setminus\{0\}}f(v)\,d\mu(g)
=
\frac{1}{\covol(\Lambda)}
\int_{K_\infty^r}f(x)\,dx.
\label{eq:single-field-mean}
\end{equation}
\end{theorem}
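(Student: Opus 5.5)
The plan is to reduce the statement to the classical Siegel mean value theorem over $\mathbb Q$, applied to the standard lattice, by restriction of scalars and commensurability. Write $n=[K:\mathbb Q]$. Identify $K_\infty^r\simeq\mathbb R^{nr}$, and fix a $\mathbb Z$-basis so that $\mathcal O_K^r$ becomes a full lattice.

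The first step is to treat $\Lambda_0=\mathcal O_K^r$. Here I would use the standard unfolding argument. Define the functional
$$
\mathcal T(f)=\int_{G_K/\Gamma_\Lambda}\sum_{0\ne v\in g\Lambda}f(v)\,d\mu(g).
$$
This is a $G_K$-invariant positive Radon measure on $K_\infty^r\setminus\{0\}$. Finiteness on compact sets follows from reduction theory, and Rogers-type bounds or Borel--Harish-Chandra give integrability of the counting function.

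For $r\ge2$, $G_K=\operatorname{SL}_r(K_\infty)=\prod_v\operatorname{SL}_r(K_v)$ acts on $K_\infty^r$ with a dense open orbit. This orbit is the set of vectors all of whose archimedean components are nonzero. The complementary set is a finite union of proper subspaces, hence Lebesgue-null. I need $\mathcal T$ to give it measure zero too. A nonzero $v\in K^r$ has all archimedean components nonzero, since its embeddings are injective. So the lattice vectors $gv$ land in the open orbit for every $g$, and $\mathcal T$ is supported there. Invariant measures on the open orbit $G_K/\mathrm{Stab}$ are unique up to scalar, because the stabilizer is unimodular (a unipotent-by-$\operatorname{SL}_{r-1}$ group). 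Hence $\mathcal T=c_\Lambda\cdot\mathrm{Leb}$.

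The second step, which I expect to be the main obstacle, is identifying the constant $c_\Lambda=1/\covol(\Lambda)$. I would first prove it for $\Lambda_0$ by a scaling/Riemann-sum argument. Take $f=\mathbf 1_{B_R}$, decompose the sum over $\Gamma$-orbits of primitive vectors, and unfold. The count of vectors in $K^r\cap g\Lambda$ in large balls is asymptotically $\vol(B_R)/\covol(\Lambda)$ for every fixed $g$. Comparing the two sides as $R\to\infty$, using dominated convergence justified by Siegel-domain bounds, forces $c_\Lambda=1/\covol(\Lambda)$. Alternatively, I can cite \cite[Theorem~2.1]{AbuyaGargavaZhao} and \cite{KimAdelicRogers} for $\Lambda_0$, where this normalization is established.

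The third step passes to a general full lattice $\Lambda\subseteq K^r$. Choose an integer $N$ with $N\Lambda_0\subseteq\Lambda\subseteq N^{-1}\Lambda_0$. Then $\Gamma_\Lambda$ and $\Gamma_{\Lambda_0}$ are commensurable, since both contain the principal congruence subgroup of level $N^2$. Set $\Gamma'=\Gamma_\Lambda\cap\Gamma_{\Lambda_0}$. Averaging over $G_K/\Gamma'$ with probability measure gives the same value as averaging over either quotient, by finite-index pushforward. The argument of step one applies verbatim to $\Gamma'$, so $\mathcal T_\Lambda=c\cdot\mathrm{Leb}$.

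To pin down $c$, I would apply the asymptotic counting of step two to $\Lambda$ directly. The lattice $g\Lambda$ has covolume $\covol(\Lambda)$ for every $g$, because $\det_{\mathbb R}g=N_{K_\infty}(\det g)=1$. The large-$R$ asymptotic $\#(g\Lambda\cap B_R)\sim\vol(B_R)/\covol(\Lambda)$ then yields $c=1/\covol(\Lambda)$. The only delicate point is the uniform domination needed to interchange the limit $R\to\infty$ with the Haar integral. I would handle it via the standard bound $\#(g\Lambda\cap B_R)\ll 1+\vol(B_R)\lambda_1(g\Lambda)^{-nr}$. Its integrability over $G_K/\Gamma'$ follows from the Siegel-set estimates for arithmetic quotients.
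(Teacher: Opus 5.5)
Your architecture (invariance, uniqueness of the invariant measure on the open orbit, asymptotic lattice-point count, commensurability) is sound. However, the domination step that is supposed to pin down the constant fails as written. The counting bound $\#(L\cap B_R)\le(1+2R/\lambda_1(L))^{nr}$ is true. But after dividing by $\vol(B_R)$ it leaves the majorant $\lambda_1(g\Lambda)^{-nr}$, and this function is \emph{not} integrable on $G_K/\Gamma'$. In the cusp one has $\mu\{\lambda_1(g\Lambda)<\varepsilon\}\asymp\varepsilon^{nr}$ as $\varepsilon\to0$, so $\int\lambda_1^{-nr}\,d\mu=\int_0^\infty\mu\{\lambda_1^{-nr}>t\}\,dt$ diverges like $\int^\infty dt/t$. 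Already for $K=\mathbb Q$, $r=2$, one has $\lambda_1(g\mathbb Z^2)^{-2}\asymp y$ high in the cusp of $\operatorname{SL}_2(\mathbb R)/\operatorname{SL}_2(\mathbb Z)$, and $\int^\infty y\cdot y^{-2}\,dy=\infty$. So no Siegel-set estimate can make this majorant integrable, and the interchange of $\lim_{R\to\infty}$ with the Haar integral is unjustified. The same issue undermines the local finiteness of $\mathcal T$ asserted in your first step, which you justify only vaguely.

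The repair is standard. Put $F_R(g)=\#(g\Lambda\cap B_R\setminus\{0\})/\vol(B_R)$, so that $\int F_R\,d\mu=c$ and $F_R\to1/\covol(\Lambda)$ pointwise. Fatou's lemma then gives $c\ge1/\covol(\Lambda)$ with no domination at all. For the reverse inequality, use all successive minima: $\#(L\cap B_R)\ll_N\prod_{i=1}^N(1+R/\lambda_i(L))$ with $N=nr$. For $R\ge1$ this gives $F_R(g)\ll_N\prod_{i}\max\{1,\lambda_i(g\Lambda)^{-1}\}$. This Schmidt-type $\alpha$-function is integrable on $G_K/\Gamma'$ when $r\ge2$; in the $\operatorname{SL}_2(\mathbb Z)$ example it is $\asymp y^{1/2}$, and $\int^\infty y^{1/2}y^{-2}\,dy<\infty$. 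Proving this integrability on Siegel sets is the genuine reduction-theory input, and it also yields the local finiteness of $\mathcal T$. With this correction your argument is a valid, self-contained proof. It takes a different route from the paper, which does not prove the statement: it cites \cite{AbuyaGargavaZhao} for the standard cyclotomic lattice and Kim's adelic Rogers formula \cite{KimAdelicRogers} in general, treating an arbitrary $\Lambda$ as the finite-level form of the adelic statement.
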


For $\Lambda=\mathcal O_K^r$, \cref{eq:single-field-mean} is the
standard form of the theorem. The general lattice statement is its
finite level form.

We now pass to a product of number fields. Let
$K_1,\ldots,K_m$ be number fields and let $r_i\ge2$. Set
$$
V_i=(K_i\otimes_{\mathbb Q}\mathbb R)^{r_i},
\qquad
V=\bigoplus_{i=1}^m V_i,
\qquad
G=\prod_{i=1}^m\operatorname{SL}_{r_i}
       (K_i\otimes_{\mathbb Q}\mathbb R).
$$
Use the product Lebesgue measure on $V$ and the restricted measures on
the subspaces $V_S$.
Choose full lattices $\Lambda_i\subseteq K_i^{r_i}$ and a finite
index sublattice
$$
\Lambda\subseteq\Lambda_1\oplus\cdots\oplus\Lambda_m.
$$
For $S\subseteq\{1,\ldots,m\}$, write
$$
V_S=\bigoplus_{i\in S}V_i,
\qquad
\Lambda_S=\Lambda\cap V_S.
$$
The arithmetic group is
$$
\Gamma=\{\gamma\in\prod_{i=1}^m\operatorname{SL}_{r_i}(K_i)\mid
                         \gamma\Lambda=\Lambda\}.
$$
It fixes $\Lambda$ and is an arithmetic subgroup of the rational
algebraic group
$$
\mathbf G=\prod_{i=1}^m
\operatorname{Res}_{K_i/\mathbb Q}\operatorname{SL}_{r_i}.
$$
Since $\mathbf G$ is semisimple, the Borel--Harish-Chandra theorem
\cite[Thm.~12.3(c)]{BorelHarishChandra} gives finite invariant volume
for $G/\Gamma$. We normalize this measure to a probability measure.

For $v=(v_1,\ldots,v_m)\in\Lambda$, define
$$
\operatorname{supp}(v)=\{i:v_i\ne0\}.
$$
For a compactly supported Borel function $F$ on $V$, define
$$
\nu_\Lambda(F)
=\int_{G/\Gamma}\sum_{v\in\Lambda}F(gv)\,d\mu(g).
$$
This measure is locally finite and invariant under $G$. For each nonempty
$S\subseteq\{1,\ldots,m\}$, the vectors of support $S$ lie in the
$G$ orbit whose components are nonzero in every real or complex factor
of $K_i\otimes_{\mathbb Q}\mathbb R$ for $i\in S$. This orbit has full
Lebesgue measure in $V_S$. It follows that there are constants
$c_S\ge0$ such that, for every nonnegative bounded compactly supported
Borel function $F$ on $V$,
\begin{equation}
\int_{G/\Gamma}\sum_{v\in\Lambda}F(gv)\,d\mu(g)
=F(0)+\sum_{\varnothing\ne S\subseteq\{1,\ldots,m\}}
c_S\int_{V_S}F(x)\,dx.
\label{eq:product-mean-constants}
\end{equation}

For each $S$, take $F$ to be the indicator of a product of balls in
$V_S$, with radii $R_i=\alpha R$ for $i\in S$, extended by zero to $V$.
Substituting this function into the mean value formula, dividing by
$R^{\dim_{\mathbb R}V_S}$, and letting $R\to\infty$ isolates $c_S$, which can be calculated as $\covol(\Lambda_{S})^{-1}$.
The terms from proper subsets of $S$ vanish in this limit and this way we can prove the mean value theorem.

Index the factors by $n\in\mathcal N$ and take $K_i=K_n$, $r_i=r_n$,
and $\Lambda=L$ from \cref{subsec:exact-dimension}. Then $\Lambda_S=L_S$
and the formula
for vectors of exact support $S$ is
$$
\int_{G/\Gamma}\sum_{v\in L_S^\circ}f(gv)\,d\mu(g)
=\frac{1}{\covol(L_S)}\int_{V_S}f(u)\,du.
$$
This proves \cref{lem:mean}.

\clearpage
\bibliographystyle{alpha}
\bibliography{references}

@misc{AbuyaGargavaZhao,
  author       = {Abuya, E. B. and Gargava, N. and Zhao, Y.},
  title        = {Stochastically evolving ellipsoids with symmetries},
  year         = {2026},
  note         = {arXiv:2606.05105v2}
}

@article{Apostol,
  author       = {Apostol, T. M.},
  title        = {Resultants of cyclotomic polynomials},
  journal      = {Proceedings of the American Mathematical Society},
  volume       = {24},
  year         = {1970},
  pages        = {457--462}
}

@article{Klartag,
  author  = {Klartag, Boaz},
  title   = {Lattice packing of spheres in high dimensions using a stochastically evolving ellipsoid},
  journal = {Inventiones Mathematicae},
  volume  = {244},
  number  = {3},
  year    = {2026},
  pages   = {1251--1279},
  doi     = {10.1007/s00222-026-01412-w}
}

@article{CaputoMenzTetali2015,
  author       = {Caputo, P. and Menz, G. and Tetali, P.},
  title        = {Approximate tensorization of entropy at high temperature},
  journal      = {Ann. Fac. Sci. Toulouse Math.},
  volume       = {24},
  number       = {4},
  year         = {2015},
  pages        = {691--716}
}

@book{VershyninHDP,
  author       = {Vershynin, R.},
  title        = {High-Dimensional Probability: An Introduction with Applications in Data Science},
  edition      = {2},
  publisher    = {Cambridge University Press},
  address      = {Cambridge},
  year         = {2026}
}

@article{KimAdelicRogers,
  author       = {Kim, S.},
  title        = {Adelic {Rogers} integral formula},
  journal      = {Journal of the London Mathematical Society},
  series       = {2},
  volume       = {109},
  number       = {1},
  year         = {2024},
  pages        = {e12830}
}

@article{RogersExistence1947,
  author  = {Rogers, C. A.},
  title   = {Existence Theorems in the Geometry of Numbers},
  journal = {The Annals of Mathematics},
  volume  = {48},
  number  = {4},
  pages   = {994--1002},
  year    = {1947},
  doi     = {10.2307/1969390}
}

@book{MinkowskiGeometry1910,
  author    = {Minkowski, Hermann},
  title     = {Geometrie der Zahlen},
  publisher = {B. G. Teubner},
  address   = {Leipzig},
  year      = {1910}
}

@article{BorelHarishChandra,
  author  = {Borel, Armand and Harish-Chandra},
  title   = {Arithmetic Subgroups of Algebraic Groups},
  journal = {Annals of Mathematics},
  volume  = {75},
  number  = {3},
  year    = {1962},
  pages   = {485--535}
}

@article{MahatabSampath,
  author  = {Kamalakshya Mahatab and Kannappan Sampath},
  title   = {Chinese Remainder Theorem for Cyclotomic Polynomials in {$\mathbf{Z}[X]$}},
  journal = {Journal of Algebra},
  volume  = {435},
  pages   = {223--262},
  year    = {2015},
  doi     = {10.1016/j.jalgebra.2015.04.006}
}

@book{ConwaySloane,
  author    = {John H. Conway and Neil J. A. Sloane},
  title     = {Sphere Packings, Lattices and Groups},
  edition   = {3},
  publisher = {Springer},
  year      = {1999}
}

@article{Venkatesh,
  author  = {Akshay Venkatesh},
  title   = {A Note on Sphere Packings in High Dimension},
  journal = {International Mathematics Research Notices},
  volume  = {2013},
  number  = {7},
  pages   = {1628--1642},
  year    = {2013},
  doi     = {10.1093/imrn/rns096}
}

@misc{GargavaViazovska,
  author       = {Nihar Gargava and Maryna Viazovska},
  title        = {Mean Value for Random Ideal Lattices},
  year         = {2024},
  note         = {arXiv:2411.14973}
}

@misc{GargavaSerbanViazovska,
  author       = {Nihar Gargava and Vlad Serban and Maryna Viazovska},
  title        = {Moments of the Number of Points in a Bounded Set for Number Field Lattices},
  year         = {2023},
  note         = {arXiv:2308.15275}
}

@misc{GargavaModuleShortest,
  author       = {Nihar Gargava and Vlad Serban and Maryna Viazovska and Ilaria Viglino},
  title        = {Module Lattices and Their Shortest Vectors},
  year         = {2025},
  note         = {arXiv:2510.12893}
}

@misc{GargavaSerbanStretching,
  author       = {Nihar Gargava and Vlad Serban},
  title        = {Dense Packings via Lifts of Codes to Division Rings},
  year         = {2022},
  note         = {arXiv:2111.03684v2}
}

\end{document}